\documentclass{amsart}
\usepackage{amsmath}
\usepackage{amsthm}
\usepackage{amssymb}
\usepackage{enumitem}\setlist[enumerate]{nosep,label=\textnormal{(\arabic*)}}
\usepackage[T1]{fontenc}
\usepackage[hidelinks]{hyperref}
\usepackage[capitalise]{cleveref}
\usepackage{mathrsfs}
\usepackage{mathtools}
\usepackage{nameref}
\usepackage[new]{old-arrows}
\usepackage{stmaryrd}
\usepackage{thmtools}
\usepackage[minimal]{yhmath}

\usepackage{tikz}
\usetikzlibrary{cd}
\usetikzlibrary{positioning, angles, quotes}

\theoremstyle{plain}
\newtheorem{thm}{Theorem}[section]
\newtheorem{cor}[thm]{Corollary}        
\newtheorem{prop}[thm]{Proposition}
\newtheorem{lem}[thm]{Lemma}
\newtheorem{claim}[thm]{Claim}
        \newtheorem*{exer*}{Exercise}
\newtheorem{conj}[thm]{Conjecture}      \newtheorem*{conj*}{Conjecture}
           \newtheorem*{q*}{Question}

\newtheorem{thmx}{Theorem}
\newtheorem{corx}[thmx]{Corollary}

\theoremstyle{definition}
\newtheorem{defn}[thm]{Definition}      \newtheorem*{defn*}{Definition}
\newtheorem{ex}[thm]{Example}           
\newtheorem{notation}[thm]{Notation}    \newtheorem*{notation*}{Notation}
\newtheorem{obs}[thm]{Observation}      \newtheorem*{obs*}{Observation}

\theoremstyle{remark}
\newtheorem{rem}[thm]{Remark}           \newtheorem*{rem*}{Remark}
\newtheorem{conv}[thm]{Convention}     \newtheorem*{conv*}{Convention}

\newtheoremstyle{iremark}
    {0.5em\topsep}   
    {0.5em\topsep}   
    {\upshape}  
    {0pt}       
    {\itshape}  
    {.}         
    {5pt plus 1pt minus 1pt} 
    {\thmname{#1}\thmnumber{ \itshape#2}\thmnote{ (#3)}} 
\theoremstyle{iremark}

\theoremstyle{plain}

\Crefname{thm}{Theorem}{Theorems}
\Crefname{lem}{Lemma}{Lemmas}
\Crefname{defn}{Definition}{Definitions}
\Crefname{claim}{Claim}{Claims}
\Crefname{ex}{Example}{Examples}
\Crefname{prop}{Proposition}{Propositions}


\newcommand{\BS}{\mathrm{BS}}

\newcommand{\pres}[2]{\langle{#1}\mid{#2}\rangle}


\newcommand{\FP}{\mathrm{FP}}

\renewcommand{\b}[1]{b^{(2)}_{#1}}

\newcommand{\Dk}[1]{\mathcal D_{k[{#1}]}}

\newcommand{\nov}[2]{\widehat{{#1}}^{#2}}


\newcommand{\C}{\mathbb{C}}

\newcommand{\Q}{\mathbb{Q}}
\newcommand{\R}{\mathbb{R}}
\newcommand{\Z}{\mathbb{Z}}

\newcommand{\inv}{^{-1}}


\DeclareMathOperator{\cd}{cd}
\DeclareMathOperator{\coInd}{coInd}

\let\H\relax
\DeclareMathOperator{\H}{H}
\DeclareMathOperator{\Hom}{Hom}
\DeclareMathOperator{\id}{id}
\DeclareMathOperator{\im}{im}
\DeclareMathOperator{\Ind}{Ind}

\DeclareMathOperator{\res}{res}

\DeclareMathOperator{\Tor}{Tor}




\makeatletter
\newsavebox{\@brx}
\newcommand{\llangle}[1][]{\savebox{\@brx}{\(\m@th{#1\langle}\)}%
  \mathopen{\copy\@brx\mkern2mu\kern-0.9\wd\@brx\usebox{\@brx}}}
\newcommand{\rrangle}[1][]{\savebox{\@brx}{\(\m@th{#1\rangle}\)}%
  \mathclose{\copy\@brx\mkern2mu\kern-0.9\wd\@brx\usebox{\@brx}}}
\makeatother

\newcounter{comments}

\title[Novikov cohomology]{Novikov cohomology, finite domination, and cohomological dimension}
\author{Sam P.\ Fisher}
\email{samuel.p.fisher@gmail.com}

\begin{document}

\begin{abstract}
    We introduce the $\Sigma^*$-invariant of a group of finite type, which is defined to be the subset of non-zero characters $\chi \in \H^1(G;\R)$ with vanishing associated top-dimensional Novikov cohomology. We prove an analogue of Sikorav's Theorem for this invariant, namely that $\cd(\ker \chi) = \cd(G) - 1$ if and only if $\pm \chi \in \Sigma^*(G)$ for integral characters $\chi$. This implies that cohomological dimension drop is an open property among integral characters. We also study the cohomological dimension of arbitrary co-Abelian subgroups. The techniques yield a short new proof of Ranicki's criterion for finite domination of infinite cyclic covers, and in a different direction, we prove that the algebra of affiliated operators $\mathcal U(G)$ of a RFRS group $G$ has weak dimension at most one if and only if $G$ is an iterated (cyclic or finite) extension of a free group.
\end{abstract}

\maketitle

\section{Introduction}

In \cite{BNSinv87}, Bieri, Neumann, and Strebel introduced the $\Sigma$-invariant of a finitely generated group $G$. The invariant, denoted $\Sigma(G)$, is a subset of $\H^1(G;\R) \smallsetminus \{0\}$, and controls the finiteness properties of co-Abelian subgroups of $G$. We call elements of $\H^1(G;\R)$ \emph{characters}; these are just homomorphisms $\chi \colon G \rightarrow \R$. If $\chi(G) \subseteq \Q$, then we say that $\chi$ is \emph{integral} (because in this case $\chi(G) \cong \Z$). The set of all non-zero characters which vanish on a given subgroup $N$ is denoted by $S(G,N)$. The following celebrated result from \cite{BNSinv87} establishes the fundamental properties of the $\Sigma$-invariant.

\begin{thm}[Bieri--Neumann--Strebel, {\cite{BNSinv87}}]\label{introthm:BNS}
    Let $G$ be a finitely generated group. The following properties hold:
    \begin{enumerate}
        \item $\Sigma(G)$ is open in $\H^1(G;\R) \smallsetminus \{0\}$;
        \item\label{introitem:coabelian_BNS} if $N \trianglelefteqslant G$ is a normal subgroup such that $G/N$ is Abelian, then $N$ is finitely generated if and only if $S(G,N) \subseteq \Sigma(G)$.
    \end{enumerate}
\end{thm}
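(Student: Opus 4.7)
The plan is to use Sikorav's reformulation of the BNS invariant: for a nonzero character $\chi \in \H^1(G;\R)$, one has $\chi \in \Sigma(G)$ if and only if $\H_0(G;\widehat{\Z G}^{-\chi}) = 0$, where $\widehat{\Z G}^{-\chi}$ denotes the Novikov ring of formal sums $\sum_g a_g g$ with $a_g \in \Z$ whose support meets $\{g : -\chi(g) \leq c\}$ in a finite set for every $c \in \R$. Equivalently, $\chi \in \Sigma(G)$ if and only if $1 \in \sum_{s\in S}(s-1)\widehat{\Z G}^{-\chi}$ for some (equivalently, every) finite generating set $S$ of $G$.

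For openness in part (1), I would start from such a witnessing identity $1 = \sum_{i=1}^{n}(s_i-1)\lambda_i$ with $\lambda_i \in \widehat{\Z G}^{-\chi_0}$. Truncating each $\lambda_i$ to a finite sum $\mu_i \in \Z G$, one arranges the error $\epsilon := 1 - \sum_i(s_i-1)\mu_i \in \Z G$ to be supported in $\{g : -\chi_0(g) > N\}$ for $N$ arbitrarily large. For $\chi$ in a sufficiently small neighbourhood of $\chi_0$, the support of $\epsilon$ still lies in $\{g : -\chi(g) > 0\}$, so $1-\epsilon$ becomes a unit in $\widehat{\Z G}^{-\chi}$ via the geometric series $\sum_{k\geq 0}\epsilon^{k}$. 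Multiplying the truncated identity on the right by $(1-\epsilon)^{-1}$ then yields an identity witnessing $\chi \in \Sigma(G)$.

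For part (2), the direction ``$N$ finitely generated $\Rightarrow S(G,N) \subseteq \Sigma(G)$'' is a direct construction: normal generators of $N$ together with lifts of a finite generating set of $G/N$ allow one to express each $s \in S$ in a form which, expanded as a Novikov series for any $\chi \in S(G,N)$, yields the required identity. The converse is the main obstacle. My plan is to use the compactness of $S(G,N)$ modulo positive scaling (it is the complement of the origin in the finite-dimensional vector space $\Hom(G/N,\R)$) together with part (1) to cover $S(G,N)$ by finitely many scale-invariant open sets in $\Sigma(G)$, each carrying a uniform Novikov witness. The delicate point is then to combine these finitely many local witnesses into a single finite set of normal generators for $N$: the finite supports of the truncated Novikov series provide a candidate set, and one must verify that this set, closed under conjugation by $S$, generates $N$, using that the chosen witnesses cover every direction $\chi$ vanishing on $N$.
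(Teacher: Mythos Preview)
There is a genuine error at the foundation of your approach: the characterisation ``$\chi \in \Sigma(G)$ if and only if $\H_0(G;\widehat{\Z G}^{-\chi}) = 0$'' is vacuous. For \emph{any} nonzero character $\psi$ one has $\H_0(G;\widehat{\Z G}^{\psi}) = 0$. Indeed, choose $g \in G$ with $\psi(g) > 0$; then $\sum_{k \geqslant 0} g^k$ lies in $\widehat{\Z G}^{\psi}$ and $(g-1)\cdot\bigl(-\sum_{k\geqslant 0} g^k\bigr) = 1$, so $1$ already lies in the image of the augmentation ideal. Your reformulation $1 \in \sum_s (s-1)\widehat{\Z G}^{-\chi}$ is therefore satisfied for every nonzero $\chi$ by taking all $\lambda_s$ but one to be zero. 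Concretely, for $G = F_2 = \langle a,b\rangle$ with $\chi(a)=1$, $\chi(b)=0$, your criterion would place $\chi$ in $\Sigma(F_2)$, whereas $\Sigma(F_2) = \varnothing$. Sikorav's theorem, as quoted in the paper, is a statement about $\H_1$, not $\H_0$; the degree shift is not cosmetic.

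Note also that the paper does not prove \cref{introthm:BNS}: it is quoted from \cite{BNSinv87} as background, so there is no proof here to compare against. That said, once $\H_0$ is replaced by $\H_1$, your truncation-and-geometric-series mechanism for openness is sound and is precisely what the paper uses in \cref{lem:open} to establish openness of its own $\Sigma^*$-invariant. Your outline for part~(2), however, remains thin even after this correction: the forward direction now requires showing that every Novikov $1$-cycle bounds (not merely exhibiting an identity for $1$), and in the converse the passage from finitely many local $\H_1$-vanishing witnesses to a finite normal generating set of $N$ is the real content of the theorem and is not addressed by your sketch.
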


In particular, an integral character $\chi \colon G \rightarrow \Z$ has finitely generated kernel if and only if $\pm \chi \in \Sigma(G)$. Recall that a map to $\Z$ with finitely generated kernel is called an \emph{algebraic fibration} of $G$. \cref{introthm:BNS} has the following well known corollary.

\begin{cor}\label{introcor:fibring_is_open}
    Algebraic fibring is an open property. More precisely, given a finitely generated group $G$, the set
    \[
        \{\chi \colon G \longrightarrow \Q \ : \ \ker \chi \ \textnormal{is finitely generated} \} \smallsetminus \{0\}
    \]
    is an open subset of $\H^1(G;\Q) \smallsetminus \{0\}$.
\end{cor}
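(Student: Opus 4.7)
The plan is to deduce this directly from \cref{introthm:BNS} by reducing rational characters to integral ones via positive scaling. The two key facts I will use are: (a) $\Sigma(G)$ is an open cone in $\H^1(G;\R)\smallsetminus\{0\}$ (invariant under multiplication by positive reals, by the standard convention), and (b) for an integral character $\chi\colon G\to\Z$, finite generation of $\ker\chi$ is equivalent to $\pm\chi\in\Sigma(G)$.

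\textbf{Step 1: Locate $\chi$ inside $\Sigma(G)$.} Let $\chi\in\H^1(G;\Q)\smallsetminus\{0\}$ be a character with finitely generated kernel. Since $\ker\chi$ depends only on the ray spanned by $\chi$, we may multiply $\chi$ by a positive rational to assume $\chi(G)=\Z$. By the finite-generation criterion following \cref{introthm:BNS}, both $\chi$ and $-\chi$ lie in $\Sigma(G)$.

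\textbf{Step 2: Use openness of $\Sigma(G)$.} Applying part (1) of \cref{introthm:BNS} at the points $\chi$ and $-\chi$, we find an open neighborhood $U\subseteq\H^1(G;\R)\smallsetminus\{0\}$ of $\chi$ such that $U\cup(-U)\subseteq\Sigma(G)$ (intersect the two neighborhoods produced by openness at $\pm\chi$, taking the second under the homeomorphism $\psi\mapsto-\psi$). Then $V\coloneqq U\cap\bigl(\H^1(G;\Q)\smallsetminus\{0\}\bigr)$ is an open neighborhood of $\chi$ in $\H^1(G;\Q)\smallsetminus\{0\}$.

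\textbf{Step 3: Check the kernel for nearby rational characters.} Fix any $\chi'\in V$. Choose a positive rational $r$ so that $r\chi'$ is integral, i.e., $r\chi'(G)\subseteq\Z$. Because $\Sigma(G)$ is closed under positive scaling and $\pm\chi'\in\Sigma(G)$, we get $\pm r\chi'\in\Sigma(G)$. By \cref{introthm:BNS}(\ref{introitem:coabelian_BNS}) applied to the integral character $r\chi'$, the kernel $\ker(r\chi')=\ker\chi'$ is finitely generated. This shows $V$ consists of rational characters with finitely generated kernel, proving openness.

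\textbf{Expected obstacle.} There is no serious obstacle: the entire content is packaged in \cref{introthm:BNS}, and the only mildly delicate point is keeping track of rational versus integral representatives when passing between the hypothesis of part (2) of the theorem (which requires an integer-valued character) and the conclusion about the rational neighborhood. This is handled by the positive-scaling invariance of $\Sigma(G)$.
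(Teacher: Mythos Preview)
Your proof is correct and matches the paper's treatment: the paper gives no explicit argument, simply recording the statement as a well-known corollary of \cref{introthm:BNS}, which is precisely what you unpack. The only minor remark is that you could bypass the rescaling in Step~3 by applying part~\ref{introitem:coabelian_BNS} of \cref{introthm:BNS} directly to $N=\ker\chi'$, since $S(G,\ker\chi')=\{\lambda\chi':\lambda\in\R\smallsetminus\{0\}\}\subseteq\Sigma(G)$ already by scale invariance.
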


The $\Sigma$-invariant is well-understood in certain cases (e.g.\ for right-angled Artin groups \cite{MeierMeinertVanWyk1998,BuxGonzalez1999}), but for most classes of groups it is notoriously difficult to compute. A useful tool is provided by Sikorav's Theorem, which gives an elegant homological criterion for a character to lie in the $\Sigma$-invariant.

\begin{thm}[Sikorav, {\cite{SikoravThese}}]
    If $G$ is a finitely generated group, then $\chi \in \Sigma(G)$ if and only if $\H_1(G;\widehat{\Z[G]}^\chi) = 0$.
\end{thm}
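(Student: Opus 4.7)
The plan is to realise $H_1(G;-)$ from a partial free $\Z[G]$-resolution built from the Cayley graph of a finite generating set, and to match the vanishing of $H_1(G;\widehat{\Z[G]}^\chi)$ with the Bieri--Neumann--Strebel connectedness condition defining $\Sigma(G)$. Fix a finite symmetric generating set $X$ of $G$, let $\Gamma=\Gamma(G,X)$ be its Cayley graph, and let $\langle X\mid R\rangle$ be a (possibly infinite) presentation; the associated $2$-complex gives the start of a free $\Z[G]$-resolution
\[
\Z[G]^R \xrightarrow{d_2} \Z[G]^X \xrightarrow{d_1} \Z[G] \to \Z \to 0, \qquad d_1(e_x)=x-1,
\]
so $H_1(G;M)=\ker(d_1\otimes_G M)/\im(d_2\otimes_G M)$ for every right $\Z[G]$-module $M$. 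The one structural fact I need about the Novikov ring is that whenever $\chi(g)>0$, the element $1-g$ is a unit in $\widehat{\Z[G]}^\chi$ with inverse $\sum_{n\ge 0}g^n$, since $\chi(g^n)\to+\infty$.

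\emph{Forward direction.} Suppose $\chi\in\Sigma(G)$, so the induced subgraph $\Gamma_\chi\subseteq\Gamma$ on $\{g:\chi(g)\ge 0\}$ is connected. Fix $t\in G$ with $\chi(t)>0$ and a spanning tree $T\subseteq\Gamma_\chi$ rooted at $1$; for each $g$ with $\chi(g)\ge 0$ the unique $T$-geodesic from $1$ to $g$ gives a $1$-chain $\sigma(g)\in\Z[G]^X$ with $d_1\sigma(g)=g-1$. Given a cycle $\zeta\in\ker(d_1\otimes_G\widehat{\Z[G]}^\chi)$, I would construct a preimage under $d_1$ (modulo $\im d_2$) by iteration: truncate $\zeta$ to finite support in a slab $c\le\chi<c+1$, apply $\sigma$ to the coefficients of the truncation to produce a $d_1$-preimage there up to a correction lying in $\im d_2$, multiply the residue by a power of $t$ so that its support sits above level $c+1$, and repeat. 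The partial sums form a Cauchy sequence in the $\chi$-adic topology on $(\widehat{\Z[G]}^\chi)^X$, whose limit $\xi$ satisfies $d_1\xi\equiv\zeta\pmod{\im d_2}$, so $H_1=0$.

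\emph{Reverse direction and main obstacle.} Conversely, if $\chi\notin\Sigma(G)$, some $g_\ast$ with $\chi(g_\ast)\ge 0$ lies in a different $\Gamma_\chi$-component from $1$, and I would construct an explicit cycle in $(\widehat{\Z[G]}^\chi)^X$ representing a non-zero class in $H_1$: disconnectedness forces any attempted boundary expression to involve an infinite cascade of corrections whose Novikov support fails to be bounded below in $\chi$. The hardest step is the forward direction's convergence argument: verifying that the iterative construction really yields a Cauchy sequence in the Novikov topology. This is where the qualitative connectedness of $\Gamma_\chi$ must be sharpened into a quantitative statement about the support of $\sigma$, and where the Novikov completion is genuinely needed---the bare group ring $\Z[G]$ is too small to contain the resulting contraction.
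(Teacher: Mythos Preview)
A framing point first: the paper does not prove this statement---it is quoted from Sikorav's thesis. What the paper \emph{does} prove (\cref{prop:Sikorav}) is a chain-complex version restricted to integral $\chi$ and requiring vanishing of Novikov homology for both $\chi$ and $-\chi$, and the method is entirely different from yours: it uses the short exact sequence of \cref{obs:SES}, the induced long exact sequence in homology (\cref{cor:LES}), and Brown's criterion (\cref{thm:Brown_crit_finiteness}) to detect finite type via commutation of homology with products. There is no Cayley graph and no manipulation of individual Novikov chains; indeed the paper explicitly remarks that its method does not recover the one-sided statement for a single arbitrary real character, which is exactly what you are attempting. So your route is closer to the classical one, and genuinely different from anything the paper does.

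That said, the forward direction of your sketch contains a real degree error. You take $\zeta\in\ker d_1\subseteq(\widehat{\Z[G]}^\chi)^X$ and propose to build ``a preimage under $d_1$'' with limit $\xi$ satisfying ``$d_1\xi\equiv\zeta\pmod{\im d_2}$''. But $d_1\xi$ lands in degree~$0$ while $\zeta$ and $\im d_2$ live in degree~$1$; the congruence does not type-check. To kill $\H_1$ you need a $d_2$-preimage of $\zeta$, or equivalently a partial chain contraction $(s_0,s_1)$ of the Novikov complex through degree~$1$ (compare the argument in \cref{lem:open}). Your map $\sigma$ is the correct raw material for $s_0$---it furnishes $d_1$-preimages of $g-1$ with support in $\Gamma_\chi$---but the substantive step, showing that $\id_{C_1}-s_0 d_1$ factors through $d_2$ over $\widehat{\Z[G]}^\chi$, is absent, and this is precisely where connectedness of $\Gamma_\chi$ and the iterative shift by $t$ must be made to interact with the relators in $R$. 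Your reverse direction is also too schematic: exhibiting a non-bounding cycle from disconnectedness of $\Gamma_\chi$ requires an explicit support or valuation estimate to rule out infinite Novikov cancellation, not merely the qualitative observation that corrections ``fail to be bounded below''.
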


The object $\widehat{\Z[G]}^\chi$ appearing in the theorem is the \emph{Novikov ring} associated to $\chi$; it is a certain completion of the group ring $\Z[G]$ along $\chi$, and can be thought of as the ring of infinite series of elements in $G$ which go to infinity in the direction of $\chi$ (see \cref{ex:Novikov} for a precise definition). Sikorav's Theorem plays a central role in Kielak's proof that a finitely generated RFRS group virtually algebraically fibres if and only if it has vanishing first $L^2$-Betti number \cite{KielakRFRS}. His strategy was to related the $L^2$-homology of a RFRS group $G$ with the homologies of its finite-index subgroups with coefficients in Novikov rings associated to various characters. 

Sikorav's Theorem has many generalisations and variations. For instance, the kernel of an integral character $\chi \colon G \rightarrow \Z$ being of type $\FP_n$ is equivalent to the vanishing of $\H_i(G;\widehat{\Z[G]}^{\pm\chi}) $ for all $i \leqslant n$ (see \cite{BieriRenzValutations}, as well as Schweitzer's appendix to \cite{BieriDeficiency}). This result also holds if one replaces the coefficient ring $\Z$ by an arbitrary ring $R$, and type $\FP_n$ by type $\FP_n(R)$ (see, e.g., \cite[Theorem 5.3]{Fisher_Improved}). A closely related result is Ranicki's criterion \cite[Theorem 1]{Ranicki_NovikovFiniteDomination}, which states that an infinite cyclic cover of a compact CW complex $X$ is finitely dominated if and only if $\H_i(\pi_1(X); \widehat{\Z[\pi_1(X)]}^{\pm\chi}) = 0$ for all $i$, where $\chi \colon \pi_1(X) \rightarrow \Z$ is the map associated to the cyclic cover (Ranicki's original result was for spaces $X$ such that $\pi_1(X) = G \times \Z$ and $\chi$ being the projection onto the $\Z$ factor, but it holds in the generality stated here). Yet another result in this direction is Latour's theorem, which, for a manifold $M$ of dimension at least $6$, characterises the existence of closed non-singular $1$-forms in a class $\chi \in \H^1(M;\R)$ in terms of the vanishing of Novikov homology associated to $\chi$ and of a $K$-theoretic invariant \cite{Latour_1formes}.

All the results mentioned above directly or indirectly relate vanishing Novikov homology of a character $\chi$ to the finiteness properties of its kernel. In \cite{Fisher_freebyZ}, the author showed that Novikov \emph{co}homology controls a different property of the kernel, namely its cohomological dimension. This was done with the goal of proving that RFRS groups of cohomological dimension two are virtually free-by-cyclic if and only if they have vanishing second $L^2$-Betti number. The main new input was a criterion stating that if a group $G$ of type $\FP$ has $\H^{\cd(G)}(G;\widehat{\Z[G]}^{\pm \chi}) = 0$, then $\cd(\ker \chi) = \cd(G) - 1$. One of the main purposes of this article is to prove a converse, thus showing that vanishing top-dimensional Novikov cohomology of integral characters completely determines the cohomological dimension of the kernel. In fact, the result holds in the generality of chain complexes of projective modules.

\begin{thmx}\label{introthm:cohom_drop_complex}
    Let $G$ be a group, let $\chi \colon G \rightarrow \Z$ be an integral character, and let $C_\bullet$ be a chain complex of projective $\Z[G]$-modules such that $C_i = 0$ for all $i > n$, for some $n \in \Z$, and assume that $C_n$ is finitely generated. The following are equivalent:
    \begin{enumerate}
        \item\label{introitem:top_dim_nov_zero} $\H^n(C_\bullet; \widehat{\Z[G]}^{\pm\chi}) = 0$;
        \item\label{introitem:dim_drop} $C_\bullet$ is chain homotopy equivalent over $\Z[\ker \chi]$ to a chain complex $D_\bullet$ of projective $\Z[\ker \chi]$-modules such that $D_i = 0$ for all $i > n-1$.
    \end{enumerate}
\end{thmx}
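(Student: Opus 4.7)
Set $N := \coker(\partial_n \colon C_n \to C_{n-1})$ and write $H := \ker\chi$. The two-term projective presentation $0 \to C_n \to C_{n-1} \to N \to 0$ over $\Z[G]$ is simultaneously a $\Z[H]$-projective presentation (since $\Z[G]$ is $\Z[H]$-free), which gives $\H^n(C_\bullet; M) \cong \Ext^1_{\Z[G]}(N, M)$ for every $\Z[G]$-module $M$ and recasts condition~(2) as the assertion that $N$ is projective over $\Z[H]$. The theorem is thus equivalent to
\[
\Ext^1_{\Z[G]}(N, \widehat{\Z[G]}^{\pm\chi}) = 0 \iff N \text{ is projective over } \Z[H].
\]

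The central tool is the short exact sequence of $\Z[G]$-bimodules
\[
0 \to \Z[G] \xrightarrow{\Delta} \widehat{\Z[G]}^\chi \oplus \widehat{\Z[G]}^{-\chi} \xrightarrow{\nabla} L \to 0,
\]
where $\Delta$ is diagonal, $\nabla$ is the difference, and after fixing a lift $t \in G$ of a generator of $\chi(G) \cong \Z$, the bimodule $L$ of bi-infinite formal series is identified with $\coInd_H^G \Z[H]$ via $\Z[G] = \bigoplus_k t^k \Z[H]$. Applying $\Ext^*_{\Z[G]}(N, -)$ and invoking Shapiro's lemma $\Ext^*_{\Z[G]}(N, \coInd_H^G \Z[H]) \cong \Ext^*_{\Z[H]}(N, \Z[H])$, the terminal segment of the long exact sequence, combined with the fact that the projective dimension of $N$ over $\Z[G]$ is at most one, gives a surjection $\Ext^1_{\Z[G]}(N, \widehat{\Z[G]}^{\pm\chi}) \twoheadrightarrow \Ext^1_{\Z[H]}(N, \Z[H])$. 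The direction $(1) \Rightarrow (2)$ now follows by extracting $\Ext^1_{\Z[H]}(N, \Z[H]) = 0$ and upgrading this to $\Z[H]$-projectivity of $N$ via the dual-basis argument of \cite{Fisher_freebyZ}, which exploits the finite generation of $C_n$ over $\Z[G]$.

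For the new direction $(2) \Rightarrow (1)$, I would apply the Lyndon--Hochschild--Serre spectral sequence for $1 \to H \to G \to \Z \to 1$, which collapses (since $\Z$ has cohomological dimension one) into the natural short exact sequence
\[
0 \to \Hom_{\Z[H]}(N, M)_{G/H} \to \Ext^1_{\Z[G]}(N, M) \to \Ext^1_{\Z[H]}(N, M)^{G/H} \to 0
\]
for each $\Z[G]$-module $M$, with $G/H \cong \Z$ acting by $t$-conjugation. Projectivity of $N$ over $\Z[H]$ kills the right-hand term, so the vanishing of $\Ext^1_{\Z[G]}(N, \widehat{\Z[G]}^{\pm\chi})$ reduces to surjectivity of $t$-conjugation minus the identity on $\Hom_{\Z[H]}(N, \widehat{\Z[G]}^{\pm\chi})$. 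The crucial algebraic input is that $t - 1$ is a unit in both $\widehat{\Z[G]}^\chi$ (with inverse $\sum_{k \geq 1} t^{-k}$) and $\widehat{\Z[G]}^{-\chi}$ (with inverse $-\sum_{k \geq 0} t^k$).

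The main obstacle is this last surjectivity: the conjugation action $(\sigma\phi)(x) = t\phi(t^{-1}x)$ is not simply post-multiplication by $t$, so one must solve the equation $(\sigma - 1)\phi = \psi$ by iterating to obtain $\phi = \sum_{j \geq 1} \sigma^{-j}\psi$ on the $\chi$-summand, and analogously $\phi = -\sum_{j \geq 0} \sigma^j \psi$ on the $-\chi$-summand. The delicate point is to verify that these formal series converge in the appropriate one-sided Novikov completion; this should be where the finite generation of $N$ over $\Z[G]$ is essential, providing uniform control over the $\chi$-supports of $\psi$ evaluated on $t^j$-shifts of a finite $\Z[G]$-generating set of $N$.
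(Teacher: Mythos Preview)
Your treatment of $(1) \Rightarrow (2)$ via the short exact sequence and Shapiro's lemma is fine and matches the paper. The gap is in $(2) \Rightarrow (1)$: the formal series $\phi = \sum_{j \geq 1} \sigma^{-j}\psi$ (or its analogue on the other summand) need not converge, and finite generation of $N$ does not rescue it. First, $N$ is a quotient of $C_{n-1}$, not of $C_n$, so it need not be finitely generated over $\Z[G]$ at all. Second, even when $N$ is as small as possible the series fails. Take $G = H \times \langle t \rangle$ with $\chi(t)=1$ and $N = \Z[G]$; then $\Hom_{\Z[H]}(N,\widehat{\Z[G]}^\chi) \cong \prod_{i\in\Z}\widehat{\Z[G]}^\chi$ via $\phi\mapsto(\phi(t^i))_i$, and $(\sigma^j\psi)(t^i)=t^j\psi(t^{i-j})$. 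Choosing $\psi(t^k)=t^{-k^2}$ for $k\leq 0$ and $\psi(t^k)=0$ for $k>0$, the series $-\sum_{j\geq 0}\sigma^j\psi$ evaluated at $1$ is $-\sum_{j\geq 0}t^{j-j^2}$, which lies in $\widehat{\Z[G]}^{-\chi}$ but not in $\widehat{\Z[G]}^\chi$; the other series diverges similarly for a symmetric choice of $\psi$. (Your inverses of $t-1$ are also assigned to the wrong completions, but that is cosmetic.) A solution to $(\sigma-1)\phi=\psi$ does exist here since $\Ext^1_{\Z[G]}(\Z[G],-)=0$, but it is not given by either geometric series, so the method cannot work in general.

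The paper's argument for $(2)\Rightarrow(1)$ bypasses this entirely and is shorter: reuse the \emph{same} short exact sequence you already set up. Applying $\Ext^*_{\Z[G]}(N,-)$ and using $\Ext^1_{\Z[H]}(N,\Z[H])=0$ (from $\Z[H]$-projectivity of $N$) together with $\Ext^2_{\Z[G]}(N,-)=0$ gives a surjection
\[
\Ext^1_{\Z[G]}(N,\Z[G]) \longtwoheadrightarrow \Ext^1_{\Z[G]}(N,\widehat{\Z[G]}^{\pm\chi}).
\]
The left-hand side is a quotient of $\Hom_{\Z[G]}(C_n,\Z[G])$ and hence countable, since $C_n$ is finitely generated and $\Z[G]$ is countable; this is where the hypothesis on $C_n$ is actually used. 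The right-hand side is a module over $\widehat{\Z[G]}^\chi$ (via right multiplication on the target), and the paper's key \cref{lem:countable} says any nonzero module over a Novikov ring is uncountable: your observation that $t-1$ is a unit upgrades to the statement that the difference of any two distinct $\{0,1\}$-valued power series in $t$ is a unit, giving uncountably many distinct nonzero elements $m\cdot s$ for any $m\neq 0$. Countability therefore forces $\Ext^1_{\Z[G]}(N,\widehat{\Z[G]}^{\pm\chi})=0$.
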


\cref{introthm:cohom_drop_complex} is proved by analysing the long exact sequence in cohomology induced by a special short exact sequence involving the Novikov ring (see \cref{obs:SES}). This short exact sequence was communicated to the author by Andrei Jaikin-Zapirain as a tool to simplify the original proof of \cite[Theorem A]{Fisher_freebyZ}. It was also used in \cite{FisherItalianoKielak_PDfibre} to study the algebraic fibres of Poincar\'e-duality groups. The most general form of \cref{introthm:cohom_drop_complex} will be given in \cref{cor:cd_chain_complex}; in particular, the theorem still holds if one replaces the coefficient ring $\Z$ with any ring $R$ (we always consider maps $G \to \Z$, of course). The crux of the implication \ref{introitem:dim_drop} $\Rightarrow$ \ref{introitem:top_dim_nov_zero} is an observation about general modules over Novikov rings: if $M$ is a module over $\widehat{\Z[G]}^\chi$ (and $\chi \neq 0$), then either $M = 0$ or $M$ is uncountable (\cref{lem:countable}).

\cref{introthm:cohom_drop_complex} has the following immediate consequence for groups, proving that top-dimensional Novikov cohomology is a perfect obstruction for cohomological dimension drop. Motivated by Sikorav's Theorem, we define the \emph{$\Sigma^*$-invariant} of a group $G$ of type $\FP$ by 
\[
    \Sigma^*(G) \ := \ \{\chi \in \H^1(G;\R) \smallsetminus \{0\} \ : \ \H^{\cd(G)}(G;\widehat{\Z[G]}^\chi) = 0 \}.
\]

\begin{corx}\label{introcor:if_and_only_if}
    Let $G$ be a group of type $\FP$. If $\chi \colon G \rightarrow \Z$ is an integral character, then $\cd(\ker \chi) = \cd(G) - 1$ if and only if $\pm\chi \in \Sigma^*(G)$.
\end{corx}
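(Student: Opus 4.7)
The plan is to deduce Corollary~B directly from Theorem~A applied to a carefully chosen projective resolution of $\Z$. Let $n = \cd(G)$. Since $G$ is of type $\FP$, I would begin by choosing a resolution $C_\bullet \to \Z$ of $\Z$ over $\Z[G]$ with each $C_i$ a finitely generated projective and $C_i = 0$ for $i > n$; such a resolution exists by a standard result on groups of type $\FP$ with finite cohomological dimension. Then $\H^n(G; \widehat{\Z[G]}^{\pm\chi}) = \H^n(C_\bullet; \widehat{\Z[G]}^{\pm\chi})$, so the assertion $\pm\chi \in \Sigma^*(G)$ is precisely condition~(1) of Theorem~A.

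Next I would observe that since $G/\ker\chi \cong \Z$, the ring $\Z[G]$ is free as a $\Z[\ker\chi]$-module (with basis the powers of any lift of a generator of the quotient). Consequently, restriction of scalars sends finitely generated projective $\Z[G]$-modules to projective $\Z[\ker\chi]$-modules, and $C_\bullet$ remains a projective resolution of $\Z$ over $\Z[\ker\chi]$. Therefore, if $D_\bullet$ is any chain complex of projective $\Z[\ker\chi]$-modules chain homotopy equivalent to $C_\bullet$ over $\Z[\ker\chi]$ and concentrated in degrees $\leqslant n-1$, then $D_\bullet$ is itself a projective resolution of $\Z$ over $\Z[\ker\chi]$ of length at most $n-1$, so its existence is equivalent to $\cd(\ker\chi) \leqslant n-1$. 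Conversely, given such a resolution $D_\bullet$ coming from the assumption $\cd(\ker\chi) \leqslant n-1$, uniqueness of projective resolutions over $\Z[\ker\chi]$ yields a chain homotopy equivalence between $D_\bullet$ and $C_\bullet|_{\Z[\ker\chi]}$, so condition~(2) of Theorem~A is satisfied.

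Combining these observations, Theorem~A gives $\pm\chi \in \Sigma^*(G) \iff \cd(\ker\chi) \leqslant n-1$. To upgrade the inequality to equality, I would invoke the standard bound
\[
    \cd(G) \;\leqslant\; \cd(\ker\chi) + \cd(G/\ker\chi) \;=\; \cd(\ker\chi) + 1,
\]
which forces $\cd(\ker\chi) \geqslant n-1$ unconditionally. Together with the previous biconditional, this yields the desired equivalence $\pm\chi \in \Sigma^*(G) \iff \cd(\ker\chi) = n-1$.

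There is no real obstacle here beyond bookkeeping: the substantive homological content — that vanishing top-dimensional Novikov cohomology witnesses the existence of a shorter projective resolution over $\Z[\ker\chi]$, and vice versa — is already packaged in Theorem~A. The only points requiring care are the freeness of $\Z[G]$ over $\Z[\ker\chi]$ (so that restriction preserves projectivity and resolutions) and the translation between "$D_\bullet$ concentrated in degrees $\leqslant n-1$" and "$\cd(\ker\chi) \leqslant n-1$", both of which are routine.
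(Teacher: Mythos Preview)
Your proof is correct and follows essentially the same route as the paper: apply the chain-complex result to a finite projective resolution of $\Z$ over $\Z[G]$ and identify the restricted complex with a projective resolution over $\Z[\ker\chi]$. The paper actually cites the slightly more refined \cref{cor:cd_chain_complex}, which is already phrased in terms of cohomological dimension of the restricted complex, so the translation you carry out between condition~(2) of Theorem~A and $\cd(\ker\chi)\leqslant n-1$ is absorbed into that corollary (via the lemma in \cref{sec:prelims} characterising $\cd(C_\bullet)$); but the substance is identical.
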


It is well-known that vanishing Novikov homology is an open property for groups of finite type; this applies just as well to Novikov cohomology, so we obtain the following corollary (see \cref{cor:open}), which is an analogue of \cref{introcor:fibring_is_open} for cohomological dimension drop.

\begin{corx}\label{introcor:open}
    If $G$ is a group of type $\FP$, then cohomological dimension drop is an open property among integral characters. More precisely, the set
    \[
        \{\chi \colon G \longrightarrow \Q \ : \ \cd(\ker \chi) = \cd(G) - 1\}
    \]
    is an open subset of $\H^1(G;\Q)$. In particular, if $\cd(G) = 2$, then the set of integral characters with free kernel is open in $\H^1(G;\Q)$.
\end{corx}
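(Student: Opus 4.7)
The plan is to deduce this directly from \cref{introcor:if_and_only_if} combined with the openness of the $\Sigma^*$-invariant. That $\Sigma^*(G)$ is open in $\H^1(G;\R) \smallsetminus \{0\}$ is flagged in the paragraph preceding the statement, and will appear in the body of the paper as \cref{cor:open}. This openness is really the only non-formal ingredient, and hence the main obstacle to verify; the argument should follow the classical blueprint for openness of BNS-type invariants. Namely, since $G$ is of type $\FP$ it admits a finite-type projective resolution of $\Z$, so the vanishing of $\H^{\cd(G)}(G;\widehat{\Z[G]}^\chi)$ amounts to the surjectivity of a single cochain map between finitely generated free $\widehat{\Z[G]}^\chi$-modules. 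This surjectivity is witnessed by finitely many preimages of the generators of the target, whose Novikov supports remain $\chi'$-positive for all $\chi'$ in some open neighbourhood of $\chi$, yielding openness.

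Granted this, the proof of the corollary is essentially formal. By \cref{introcor:if_and_only_if}, a non-zero integral character $\chi \colon G \to \Q$ satisfies $\cd(\ker \chi) = \cd(G) - 1$ if and only if both $\chi$ and $-\chi$ lie in $\Sigma^*(G)$; equivalently, $\chi \in \Sigma^*(G) \cap (-\Sigma^*(G))$. Since negation is a self-homeomorphism of $\H^1(G;\R)$, this intersection is open in $\H^1(G;\R) \smallsetminus \{0\}$. Intersecting with the rational subspace $\H^1(G;\Q) \subseteq \H^1(G;\R)$ produces an open subset of $\H^1(G;\Q) \smallsetminus \{0\}$; the trivial character has $\ker\chi = G$ of cohomological dimension $\cd(G) \neq \cd(G)-1$ and hence does not belong to the set under consideration, so the set is in fact open in the whole of $\H^1(G;\Q)$.

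For the final assertion, specialise to $\cd(G) = 2$. A group of finite cohomological dimension over $\Z$ is torsion-free, and hence so are all its subgroups. By the Stallings--Swan theorem, a torsion-free group has cohomological dimension at most one if and only if it is free, and any non-trivial free group has cohomological dimension exactly one. Thus for a non-zero integral $\chi$, the condition $\cd(\ker\chi) = 1$ is equivalent to $\ker\chi$ being a (non-trivial) free group. The trivial character has kernel $G$, which is not free since $\cd(G) = 2 > 1$, so its exclusion is again automatic, and the general openness statement established above specialises to the in-particular claim.
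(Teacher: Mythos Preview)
Your overall strategy is exactly the paper's: the corollary is deduced immediately from \cref{introcor:if_and_only_if} together with the openness of $\Sigma^*(G)$ in $\H^1(G;\R)\smallsetminus\{0\}$. The formal deduction you give (intersecting $\Sigma^*(G)\cap(-\Sigma^*(G))$ with $\H^1(G;\Q)$, handling the trivial character separately, and invoking Stallings--Swan for the $\cd(G)=2$ case) is correct and slightly more detailed than what the paper writes.

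Two points to flag. First, a cross-reference slip: the openness result in the body is \cref{prop:open} (proved via \cref{lem:open}); \cref{cor:open} is the restatement of the very corollary you are proving.

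Second, and more substantively, your sketch of the openness argument is too optimistic. You write that surjectivity of the top coboundary map is witnessed by finitely many preimages ``whose Novikov supports remain $\chi'$-positive for all $\chi'$ in some open neighbourhood of $\chi$''. But those preimages lie in $\widehat{\Z[G]}^\chi$ and typically have \emph{infinite} support; there is no reason they belong to $\widehat{\Z[G]}^{\chi'}$ for any $\chi'\neq\chi$, and indeed the paper remarks explicitly (after \cref{lem:open}) that definability over Novikov rings is \emph{not} an open condition. The actual argument requires the truncation trick: replace each preimage by a finitely supported truncation, obtaining $VA = I - M$ with $M$ a finite matrix over $\Z[G]$ whose entries have strictly $\chi$-positive support; then $M$ remains $\chi'$-positively supported for $\chi'$ near $\chi$, so $I-M$ is invertible over $\widehat{\Z[G]}^{\chi'}$ via the geometric series, and $(I-M)^{-1}V$ furnishes the required section. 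Your sketch collapses precisely this step.
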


From a slightly stronger version of \cref{introthm:cohom_drop_complex}, we obtain a result on the $L^2$-invariants of RFRS groups which generalises \cite[Theorem A]{Fisher_freebyZ}. Let $\mathcal U(G)$ be the algebra of operators affiliated to the group von Neumann algebra of $G$. The $L^2$-Betti numbers of $G$ are defined by $b_i^{(2)}(G) = \dim_{\mathcal U(G)} \H_i(G;\mathcal U(G))$, where $\dim_{\mathcal U(G)}$ denotes the von Neumann dimension (see \cite{Luck02} for details). The \emph{weak dimension} of an $R$-module $M$ is the supremal integer $n$ for which there exists an $R$-module $N$ such that $\Tor_n^R(M,N) \neq 0$.

\begin{thmx}\label{introthm:RFRS}
    Let $G$ be a RFRS group of type $\FP(\Q)$. The following are equivalent:
    \begin{enumerate}
        \item $\mathcal U(G)$ is of weak dimension at most one as a $\Q[G]$-module;
        \item for every subgroup $H \leqslant G$, we have $b_i^{(2)}(H) = 0$ for all $i > 1$;
        \item $G$ is an iterated (cyclic or finite) extension of a free group.
    \end{enumerate}
\end{thmx}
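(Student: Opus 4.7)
The plan is to prove the implications (1) $\Rightarrow$ (2), (3) $\Rightarrow$ (1), and (2) $\Rightarrow$ (3). The implication (1) $\Rightarrow$ (2) is essentially formal. For any subgroup $H \leqslant G$, Shapiro's lemma gives $\Tor_i^{\Q[H]}(\Q, \mathcal U(G)) \cong \Tor_i^{\Q[G]}(\Q[G] \otimes_{\Q[H]} \Q, \mathcal U(G))$, which vanishes for $i \geqslant 2$ by (1). Since $\mathcal U(G)$ is faithfully flat over $\mathcal U(H)$ (a standard property of affiliated operators), flat base change gives $\Tor_i^{\Q[H]}(\Q, \mathcal U(G)) \cong \Tor_i^{\Q[H]}(\Q, \mathcal U(H)) \otimes_{\mathcal U(H)} \mathcal U(G)$, and faithful flatness descends the vanishing to $\Tor_i^{\Q[H]}(\Q, \mathcal U(H)) = 0$, i.e.\ $b_i^{(2)}(H) = 0$.

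For (3) $\Rightarrow$ (1) I induct on the length of the tower witnessing (3). Free groups $F$ satisfy (1) since $\Q[F]$ has global dimension at most one. For a finite-index normal extension $H \trianglelefteqslant G$, the isomorphism $\mathcal U(G) \cong \mathcal U(H) \otimes_{\Q[H]} \Q[G]$ and change of rings along the finite-rank free extension $\Q[H] \subseteq \Q[G]$ preserve wdim. For a cyclic extension $1 \to N \to G \to \Z \to 1$, I refine the Ore-theoretic methods of \cite{Fisher_freebyZ}, exploiting that RFRS groups satisfy the strong Atiyah conjecture and that the twisted Laurent structure $\Q[G] \cong \Q[N][t^{\pm 1}]_\phi$ combined with flatness of $\mathcal U(G)$ over $\mathcal U(N)$ allows one to transfer the wdim bound from $\mathcal U(N)$ over $\Q[N]$ to $\mathcal U(G)$ over $\Q[G]$.

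The main implication (2) $\Rightarrow$ (3) is proved by induction on $n := \cd_\Q(G)$. In the base case $n \leqslant 1$, Dunwoody's theorem implies $G$ is free. For $n \geqslant 2$, I invoke the strengthened form of \cref{introthm:cohom_drop_complex} promised in the introduction---combined with Kielak's method of approximating $L^2$-invariants by Novikov invariants along the RFRS filtration---to produce a finite-index subgroup $G' \leqslant G$ and an integral character $\chi \colon G' \to \Z$ such that $\ker\chi$ is of type $\FP(\Q)$ and $\pm\chi \in \Sigma^*(G')$. By \cref{introcor:if_and_only_if}, $\cd_\Q(\ker\chi) = n-1$. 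The kernel is RFRS as a subgroup of $G$, of type $\FP(\Q)$, and every subgroup of $\ker\chi$ is a subgroup of $G$, so hypothesis (2) is inherited. By induction $\ker\chi$ is an iterated (cyclic or finite) extension of a free group; extending by one cyclic step yields such a tower for $G'$, and one further finite step yields one for $G$.

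The principal obstacle is the inductive step of (2) $\Rightarrow$ (3): one must arrange for $\ker\chi$ to have \emph{both} reduced cohomological dimension and type $\FP(\Q)$, so that the inductive hypothesis applies. This requires both the sharper chain-complex form of \cref{introthm:cohom_drop_complex} and a Kielak-style descent along the RFRS filtration, upgrading the vanishing of the top-degree $L^2$-Betti number of $G$ to the vanishing of top-degree Novikov cohomology for some integral character at a finite-index level.
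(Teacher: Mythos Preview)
Your implications (1) $\Rightarrow$ (2) and (3) $\Rightarrow$ (1) are essentially on the same track as the paper's. For (3) $\Rightarrow$ (1), the paper streamlines your separate treatment of finite and cyclic steps by observing that both quotients are amenable and invoking Tamari's result that $\mathcal{D}_{\Q[G]}$ is the Ore localisation of $\mathcal{D}_{\Q[N]} * G/N$; the transfer of the weak-dimension bound then follows from Shapiro's lemma and flatness of Ore localisation in one stroke.

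The genuine gap is in (2) $\Rightarrow$ (3). You correctly identify the obstacle---making the inductive hypothesis descend to $\ker\chi$---but your resolution does not work. The Kielak-style descent from vanishing top-degree $L^2$-cohomology to vanishing top-degree Novikov cohomology at a finite-index level is exactly what the paper does, and it yields $\cd_{\Q}(\ker\chi) = n-1$. It does \emph{not}, however, yield that $\ker\chi$ is of type $\FP(\Q)$. Nothing in the hypotheses forces $b_1^{(2)}(G')$ to vanish, so the Kielak--Fisher fibring theorem cannot be invoked to make the kernel even of type $\FP_1(\Q)$; there is simply no mechanism in your proposal that produces finite generation of $\ker\chi$, let alone the full $\FP(\Q)$ property. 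Your induction therefore stalls after the first step.

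The paper's fix is to change the inductive hypothesis. Rather than carrying ``type $\FP(\Q)$'' through the tower, one carries the weaker condition that $\H^i(-;\,\cdot\,)$ commutes with direct limits for all $i>1$ (equivalently, the group is of type $\FP^*_{n-1}(\Q)$: finitely generated in the \emph{top} degrees of a length-$n$ resolution). This is precisely what the strengthened form of \cref{introthm:cohom_drop_complex} (namely \cref{thm:cd_limits_commute}) delivers: from vanishing Novikov cohomology in degrees $>1$ one gets both $\cd_{\Q}(\ker\chi)=n-1$ and that $\H^i(\ker\chi;\,\cdot\,)$ commutes with direct limits for $i>1$. That weaker finiteness condition is also all that is needed to run the Kielak descent again (via \cref{lem:direct_limit}), so the induction closes. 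The statement of \cref{introthm:RFRS} with its $\FP(\Q)$ hypothesis is then recovered at the end, not maintained throughout.
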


\cref{introthm:RFRS} will be used to prove that RFRS groups admitting an Abelian hierarchy are iterated (cyclic or finite) extensions of free groups, extending a result of Hagen and Wise \cite{HagenWise_freebyZ} (see \cref{cor:Abelian_hierarchy}). The weak dimension one property is a crucial step in Jaikin-Zapirain and Linton's proof that group algebras of one-relator groups are coherent \cite{JaikinLinton_coherence}. In a similar vein, a conjecture of Wise predicts that groups of cohomological dimension at most two are coherent if and only if they have vanishing second $L^2$-Betti number. If $G$ is RFRS and $\cd(G) \leqslant 2$, then an easy consequence of \cite[Theorem A]{Fisher_freebyZ} is that the vanishing of the second $L^2$-Betti number is equivalent to $\mathcal U(G)$ being of weak dimension at most one. Motivated by all this, we make the following prediction for coherence in the class of finite type RFRS groups.

\begin{conj}
    If $G$ is a RFRS group of type $\FP$, then the following are equivalent:
    \begin{enumerate}
        \item $G$ is coherent;
        \item $\Q[G]$ is coherent;
        \item $\mathcal U(G)$ is of weak dimension one as a $\Q[G]$-module.
    \end{enumerate}
\end{conj}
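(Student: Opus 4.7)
The plan is to use \cref{introthm:RFRS} to identify condition (3) with the combinatorial statement that $G$ is built from a free group by an iterated sequence of cyclic and finite extensions. With this reformulation, the implications (3) $\Rightarrow$ (1) and (3) $\Rightarrow$ (2) should follow by induction on the length of the hierarchy. The base cases --- coherence of free groups and coherence of their rational group algebras --- are classical, and both properties pass to finite extensions and finite-index subgroups by standard arguments. What remains is preservation under cyclic extensions. For free-by-cyclic groups, coherence of the group is the Feighn--Handel theorem, and coherence of the group algebra is within reach of the tools developed in \cite{JaikinLinton_coherence}; the main obstacle is extending these to cyclic extensions of arbitrary coherent RFRS groups of type $\FP$.

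To attack this inductive step, I would work inside the framework developed in this paper. Writing $\Q[H \rtimes \Z]$ as a skew Laurent polynomial ring with respect to the character $\chi$ onto the cyclic quotient, \cref{introcor:if_and_only_if} places $\pm\chi$ in $\Sigma^*(H \rtimes \Z)$, so that the top-dimensional cohomology of finitely generated $\Q[H \rtimes \Z]$-modules with coefficients in the Novikov ring $\widehat{\Q[H \rtimes \Z]}^{\pm\chi}$ vanishes. Combining this vanishing with the inductive hypothesis that $\Q[H]$ is coherent, one should be able to lift finite presentations of ideals across the Novikov short exact sequence from \cref{obs:SES} --- descending from $\widehat{\Q[H \rtimes \Z]}^{\pm\chi}$ back down to $\Q[H \rtimes \Z]$ --- and conclude coherence of $\Q[H \rtimes \Z]$. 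The analogous argument at the group level, where one uses \cref{introthm:cohom_drop_complex} to descend a partial resolution of the trivial module to a finite presentation of a given finitely generated subgroup, would yield (3) $\Rightarrow$ (1).

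For the reverse directions, I would aim to show that either coherence of $G$ or coherence of $\Q[G]$ forces $\b{i}(H) = 0$ for every subgroup $H \leqslant G$ and every $i \geqslant 2$, after which (3) follows from \cref{introthm:RFRS}. Since coherence of $G$ is inherited by finitely generated subgroups, it suffices to handle $G$ itself. In cohomological dimension two this vanishing is exactly Wise's conjecture, for which \cite{Fisher_freebyZ} already gives the bridge to weak dimension one. In higher dimension, an inductive argument using Kielak's virtual algebraic fibring theorem \cite{KielakRFRS} should reduce each step to a lower-dimensional case, provided that the fibres produced by Kielak's theorem inherit coherence (or coherence of their rational group algebra) from $G$. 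Establishing this last preservation property --- in essence, that coherence is closed under passage to kernels of algebraic fibrations --- is the deepest obstacle and constitutes the heart of the conjecture.
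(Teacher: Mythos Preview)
This statement is a \emph{conjecture} in the paper, not a theorem; the paper offers no proof and explicitly presents it as a prediction motivated by the surrounding results. There is therefore nothing to compare your proposal against.

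That said, your outline is an honest sketch of where the difficulties lie rather than a proof, and you correctly flag the two genuine gaps. For $(3)\Rightarrow(1),(2)$: the hierarchy produced by \cref{introthm:RFRS} has intermediate groups that need not be of type $\FP$ (the free group at the bottom may well be infinitely generated), so neither Feighn--Handel nor the Jaikin--Linton machinery applies directly at each stage. Your proposed use of \cref{introcor:if_and_only_if} and the short exact sequence of \cref{obs:SES} to lift finite presentations across the Novikov completion is suggestive, but no mechanism is given for why a finite presentation over $\widehat{\Q[H\rtimes\Z]}^{\pm\chi}$ descends to one over $\Q[H\rtimes\Z]$; this is not a formal consequence of anything in the paper. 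For $(1),(2)\Rightarrow(3)$: the reduction via Kielak's theorem requires that algebraic fibres of coherent RFRS groups remain coherent, and as you say yourself, this is wide open and essentially the content of the conjecture. In short, your plan identifies the right landscape but does not close any of the gaps---which is consistent with the statement's status as a conjecture.
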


The method of proving \cref{introthm:cohom_drop_complex} also yields a new proof of Ranicki's Criterion \cite{Ranicki_NovikovFiniteDomination} for finite domination of cyclic covers (see \cref{thm:ranicki}), as well as a new proof of the implication
\[
    \H_1(G;\widehat{\Z[G]}^{\pm\chi}) = 0 \qquad \Longrightarrow \qquad \ker \chi \ \text{is finitely generated}
\]
(see \cref{prop:Sikorav}). This is the more subtle direction of Sikorav's Theorem, and is usually the one used in practice (such as in Kielak's fibring theorem). The new proofs are more conceptual, as there is no longer the need to deal with individual Novikov cycles. One instead works with long exact sequences of homology modules and other standard results from homological algebra, and uses the vanishing of Novikov homology to show that the homology of the kernel commutes with direct products. By Brown's homological characterisation of finite domination \cite{Brown_HomCritFinite}, this yields finite generation of the kernel. Thus, the method links two well-known criteria for finite domination of chain complexes: Brown's and Ranicki's. Another satisfying aspect of this proof is that it unifies the points of view on Novikov cohomology controlling cohomological dimension and Novikov homology controlling finiteness properties of the cyclic covers; these phenomena follow from dual arguments. A drawback, however, is that it is crucial to work with integral characters and the vanishing of Novikov homology associated to both $\chi$ and $-\chi$ simultaneously. We do not obtain an new proof of the fact that $\chi \in \Sigma(G)$ when $\H_1(G;\widehat{\Z[G]}^\chi) = 0$ for general characters $\chi \colon G \rightarrow \R$.

From the Lyndon--Hochschild--Serre spectral sequence, it is easy to see that the cohomological dimension of $\ker(\chi \colon G \rightarrow \Z)$ is at least $\cd(G)-1$, so vanishing of Novikov cohomology in low codimensions cannot imply further cohomological dimension drop of the kernel (we will actually see that vanishing of low-codimensional Novikov cohomology implies the functors $\H^i(\ker \chi; - )$ commute with direct limits in the same low codimensions, which can be seen as a type of finiteness property). However, it still makes sense to ask whether Novikov cohomology can control further cohomological drop of the kernel of a non-integral character. We define the higher $\Sigma^*$-invariants of a group $G$ of finite type by
\[
    \Sigma_m^*(G) \ := \ \{ \chi \in \H^1(G;\R) \smallsetminus \{0\} \ : \ \H^i(G;\widehat{\Z[G]}^\chi) = 0 \ \text{for all} \ i > \cd(G)-m\}
\]
for $m \geqslant 0$. Hence, $\Sigma_0^*(G) = \H^1(G;\R) \smallsetminus \{0\}$ and $\Sigma_1^*(G) = \Sigma^*(G)$. We obtain the following sufficient condition for a map to a free Abelian group to have kernel of the minimal theoretically allowed cohomological dimension.

\begin{thmx}\label{introthm:coabelian}
    Let $G$ be a group of type $\FP$. Let $N \trianglelefteqslant G$ be a normal subgroup such that $G/N \cong \Z^d$. If $S(G,N) \subseteq \Sigma_d^*(G)$, then $\cd(N) = n-d$.
\end{thmx}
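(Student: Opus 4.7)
The plan is to prove \cref{introthm:coabelian} by induction on $d$, with $n := \cd(G)$. The base case $d = 1$ is precisely \cref{introcor:if_and_only_if}: since $\Sigma^*_1(G) = \Sigma^*(G)$ and $S(G,N)$ consists of the non-zero real multiples of the integral generator $\chi$ of $\H^1(G/N;\Z)$, the hypothesis forces $\pm\chi \in \Sigma^*(G)$, giving $\cd(N) = n - 1$.

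For the inductive step, fix a $\Z$-basis $\chi_1, \ldots, \chi_d$ of $\H^1(G/N;\Z) \hookrightarrow \H^1(G;\R)$ and let $H = \ker\chi_1$, so that $N \trianglelefteqslant H$ with $H/N \cong \Z^{d-1}$ and $G/H \cong \Z$. Since $\Sigma^*_d(G) \subseteq \Sigma^*(G)$ and $\pm\chi_1 \in S(G,N)$, the chain complex form of \cref{introthm:cohom_drop_complex} (as promised in \cref{cor:cd_chain_complex}) applied to a finite-type projective resolution of $\Z$ over $\Z[G]$ of length $n$ produces a chain-homotopy equivalent complex of projective $\Z[H]$-modules of length $n - 1$ with finitely generated top term. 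Hence $H$ is of type $\FP$ with $\cd(H) = n - 1$. To apply the inductive hypothesis to $(H, N)$ it then suffices to verify $S(H, N) \subseteq \Sigma^*_{d-1}(H)$.

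The main obstacle is this verification. Given $\chi' \in S(H, N)$, one needs $\H^j(H; \widehat{\Z[H]}^{\chi'}) = 0$ for every $j > (n-1) - (d-1) = n - d$. Because the restriction map $\H^1(G/N; \R) \twoheadrightarrow \H^1(H/N; \R)$ is surjective (dual to the split inclusion $\Z^{d-1} \hookrightarrow \Z^d$), $\chi'$ lifts to some $\tilde\chi \in S(G, N)$, and by hypothesis $\H^j(G; \widehat{\Z[G]}^{\tilde\chi}) = 0$ for $j > n - d$; the same holds for the perturbations $\tilde\chi \pm \epsilon\chi_1 \in S(G,N)$ with $\epsilon \in \R$. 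Via Shapiro's lemma,
\[
    \H^j(H; \widehat{\Z[H]}^{\chi'}) \cong \H^j\bigl(G; \coInd_H^G \widehat{\Z[H]}^{\chi'}\bigr),
\]
so the heart of the argument is to realise $\coInd_H^G \widehat{\Z[H]}^{\chi'}$ in a short exact sequence of $\Z[G]$-modules whose remaining terms are built from Novikov modules $\widehat{\Z[G]}^{\chi}$ with $\chi \in S(G, N)$. I expect the right construction to be a perturbation analogue of the two-sided Novikov short exact sequence \cref{obs:SES}, using the natural inclusions $\widehat{\Z[G]}^{\tilde\chi \pm \epsilon\chi_1} \hookrightarrow \coInd_H^G \widehat{\Z[H]}^{\chi'}$ for small $\epsilon > 0$. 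The associated long exact sequence will then force the required vanishing, and invoking the inductive hypothesis on $(H, N, d-1)$ yields $\cd(N) = (n-1)-(d-1) = n-d$; the matching lower bound is automatic from the Lyndon--Hochschild--Serre spectral sequence for $1 \to N \to G \to \Z^d \to 1$.
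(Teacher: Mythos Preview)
Your inductive scheme has a genuine gap at the point where you pass from $G$ to $H = \ker\chi_1$. You assert that \cref{introthm:cohom_drop_complex}/\cref{cor:cd_chain_complex} yields a projective $\Z[H]$-resolution of length $n-1$ \emph{with finitely generated top term}, and hence that $H$ is of type $\FP$. Neither statement is provided by those results: they only give $\cd(H)\leqslant n-1$. Vanishing Novikov \emph{cohomology} controls cohomological dimension, not finiteness properties; the latter are governed by Novikov \emph{homology} via Sikorav's Theorem, and the hypothesis $S(G,N)\subseteq\Sigma_d^*(G)$ says nothing about $\H_\ast(G;\widehat{\Z[G]}^{\pm\chi_1})$. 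Concretely, if $G$ is a $\mathrm{PD}_3$-group then $\Sigma_1^*(G)=\H^1(G;\R)\smallsetminus\{0\}$ by duality with $\H_0$, yet $\ker\chi_1$ is typically not even finitely generated. So the inductive hypothesis, as you have formulated it (requiring type $\FP$), does not apply to $H$. One could try to weaken the hypothesis to ``$\H^i(G;-)$ commutes with direct limits for $i>n-d$'' (which \cref{thm:cd_limits_commute} does pass to $H$), but then you must redo the base case and the argument in that generality.

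The second gap is the ``perturbation analogue'' of \cref{obs:SES}. You have not constructed it, and it is not clear that a short exact sequence with the Novikov rings $\widehat{\Z[G]}^{\tilde\chi\pm\epsilon\chi_1}$ in the middle and $\coInd_H^G\widehat{\Z[H]}^{\chi'}$ as quotient exists; the support conditions defining these modules do not match up in the way the additive picture of \cref{obs:SES} requires. The actual short exact sequence in the paper (with $M=\widehat{\Z[H]}^{\chi'}$) has $\Ind_H^G\widehat{\Z[H]}^{\chi'}$ on the left, and the long exact sequence then reduces the problem to showing $\H^{i}(G;\Ind_H^G\widehat{\Z[H]}^{\chi'})=0$ for $i>n-d+1$, which is \emph{not} a Novikov cohomology of $G$ and does not follow directly from $S(G,N)\subseteq\Sigma^*_d(G)$. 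The paper closes this gap by a different route: it applies the Farber--Geoghegan--Sch\"utz multi-variable Sikorav theorem (\cref{thm:Sikorav_for_complexes}) to the \emph{dual} complex $\Hom_{\Z[G]}(C_\bullet,\Z[G])$, obtaining a finiteness property that, via \cref{prop:finite_polyZ}, yields exactly the vanishing of these induced-coefficient cohomologies along a fixed flag in $\Z^d$; an induction on the flag (the Claim inside the proof of \cref{thm:co_abelian_cd}) then converts this into the vanishing of $\H^i(\res^G_{G_j}C_\bullet;\widehat{\Z[G_j]}^{\pm\chi_j})$ needed to conclude via \cref{prop:cd_polyZ}.
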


Given the similarity with \cref{introthm:BNS}\ref{introitem:coabelian_BNS}, this raises the obvious question of whether the converse statement holds (\cref{introcor:if_and_only_if} is the important special case $d = 1$); it does not, see \cref{ex:bad_3_mfld}. \cref{introthm:coabelian} has the following corollary for Poincar\'e-duality groups.

\begin{corx}\label{introcor:PD}
    Let $G$ be a Poincar\'e-duality group of dimension $n$ and let $\chi \colon G \rightarrow \Z^d$. If $\ker \chi$ is of type $\FP_{d-1}$, then $\cd(G) = n-d$.
\end{corx}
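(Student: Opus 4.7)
The plan is to deduce the corollary from Theorem~\ref{introthm:coabelian} applied to $N = \ker\chi$, regarding $\chi$ as a surjection onto $\Z^d$ (replacing $\Z^d$ by $\chi(G)$ if necessary, so that $G/N \cong \Z^d$). Since $G$ is Poincar\'e-duality of dimension $n$, we have $\cd(G) = n$, so I read the intended conclusion as $\cd(\ker\chi) = n-d$ (there appears to be a typo in the statement); this then follows at once from Theorem~\ref{introthm:coabelian} provided we verify its hypothesis $S(G,N) \subseteq \Sigma_d^*(G)$.

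The verification proceeds by converting Novikov \emph{homology} (which the $\FP_{d-1}$ hypothesis controls) into Novikov \emph{cohomology} (which $\Sigma_d^*$ records) via Poincar\'e duality. Assume first that $G$ is orientable, and fix $\psi \in S(G, N)$. Since $N$ is of type $\FP_{d-1}$ with abelian quotient $G/N$, the homological Bieri--Renz criterion \cite{BieriRenzValutations} yields
\[
    \H_i\bigl(G; \widehat{\Z[G]}^\psi\bigr) = 0 \qquad \text{for all } i \leqslant d-1.
\]
Poincar\'e duality in the orientable case gives $\H^i(G; M) \cong \H_{n-i}(G; M)$ for every $\Z[G]$-module $M$, so applied to $M = \widehat{\Z[G]}^\psi$ the display above becomes $\H^i(G; \widehat{\Z[G]}^\psi) = 0$ for all $i > n-d$. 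This is precisely the membership $\psi \in \Sigma_d^*(G)$. Since $\psi \in S(G,N)$ was arbitrary, the hypothesis $S(G,N) \subseteq \Sigma_d^*(G)$ is verified and Theorem~\ref{introthm:coabelian} concludes $\cd(N) = n-d$.

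For the non-orientable case, I would reduce to the orientable one by passing to the orientation cover $G_0 = \ker w$, which is an orientable PD$_n$ group of index at most two in $G$. Setting $N_0 = N \cap G_0$, one checks that $N_0$ is finite-index in $N$ (hence still of type $\FP_{d-1}$) and that $G_0/N_0 \cong G_0 N/N$ has index at most two in $\Z^d$, hence is isomorphic to $\Z^d$. The orientable case applied to $G_0$ and $N_0$ then gives $\cd(N_0) = n-d$, and Serre's theorem on finite-index torsion-free subgroups upgrades this to $\cd(N) = n-d$. The main conceptual input beyond the paper's own machinery is the Bieri--Renz theorem in its form for abelian quotients rather than single characters to $\Z$; once that is taken as a black box, the entire argument collapses into one application of Poincar\'e duality followed by Theorem~\ref{introthm:coabelian}.
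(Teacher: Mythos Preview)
Your proposal is correct and follows essentially the same route as the paper's proof (which appears as \cref{cor:PD}): use the Bieri--Renz theorem to obtain $S(G,N)\subseteq\Sigma_{d-1}(G)$, apply Poincar\'e duality to convert this into $S(G,N)\subseteq\Sigma_d^*(G)$, and then invoke \cref{introthm:coabelian}. Your extra paragraph on the non-orientable case is a nice bit of care the paper omits; in fact one can also handle it directly, since the twisted module $\widehat{\Z[G]}^\psi\otimes D$ is still a module over the ring $\widehat{\Z[G]}^\psi$, so \cref{lem:acyclic_ring_module} transfers the Novikov-homology vanishing across the orientation twist without passing to a cover.
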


The condition $\FP_0$ is vacuous, so in the case where $d = 1$, the corollary says that all kernels of maps onto $\Z$ drop in cohomological dimension. This is a very special case of Strebel's theorem, which states that all infinite-index subgroups of $\mathrm{PD}_n$-groups have cohomological dimension at most $n-1$. There are various situations where cohomological dimension is additive under group extensions. One of the most general results is Fel'dman's theorem \cite[Theorem 2.4]{Feldman71}, which states that if $1 \to N \to G \to Q \to 1$ is an extension of groups and $k$ is a field such that $N$ is of type $\FP(k)$ and $\cd_k(Q) < \infty$, then $\cd_k(N) + \cd_k(Q) = \cd_k(G)$. An interesting feature of \cref{introcor:PD} is that the finiteness property required on the kernel is much weaker than in Fel'dman's theorem. We will also see in \cref{cor:PD} that the result holds over arbitrary rings, whereas it is important in Fel'dman's theorem that $k$ be a field.

\subsection{Structure of the paper}

\cref{sec:prelims} introduces the necessary general homological algebra preliminaries for the article. In \cref{sec:fin_dom}, we will prove \cref{introthm:BNS} in its full generality. This means working with a general chain complex of finitely generated $R$-modules, and arbitrary Laurent series instead of Novikov rings. We will also give a new proof of Ranicki's Criterion and of the fact that vanishing Novikov homology detects finiteness properties. In \cref{sec:Sigma_inv}, we define the $\Sigma^*$-invariants of a group over a general ring, and prove \cref{introcor:if_and_only_if,introcor:fibring_is_open}, as well as \cref{introthm:coabelian} and \cref{introcor:PD}. In \cref{sec:RFRS}, we use Kielak's connection between Novikov and $L^2$-cohomology to prove \cref{introthm:RFRS}. Finally, in \cref{sec:one_rel} we look at the example of one-relator groups to show that the top-dimensional Novikov homology does not seem to be a useful invariant, in contrast to top-dimensional Novikov cohomology. We will see that torsion-free one-relator groups have vanishing second Novikov homology with respect to all characters, though this is far from the case for the second Novikov cohomology.

\subsection{Acknowledgments}

The author is grateful to Andrei Jaikin-Zapirain, Dawid Kielak, Ian Leary, and Boris Okun for useful comments. The author also thanks ICMAT, Andrei, Javier, Marco, and Sara, for their hospitality.

\section{Homological algebra preliminaries}\label{sec:prelims}

\begin{conv}
    All rings associative and unital, with $1 \neq 0$. Unless specified otherwise, $R$ always denotes an arbitrary ring and $G$ always denotes an arbitrary group.
\end{conv}

Let $R$ be a ring. A chain complex $\cdots \to C_{i+1} \to C_i \to \cdots$ of $R$-modules will usually be denoted by $C_\bullet$ and its boundary maps by $\partial_i \colon C_i \rightarrow C_{i-1}$ (or just by $\partial$ when the dimension is understood). Suppose that $C_\bullet$ is a chain complex of left $R$-modules. If $M$ is a right $R$-module, then $i$th \emph{homology} of $C_\bullet$ with coefficients in $M$ is $i$th homology of the chain complex $M \otimes_R C_\bullet$ and is denoted by $\H_i(C_\bullet; M)$. Similarly, if $M$ is a left $R$-module, then the $i$th \emph{cohomology} of $C_\bullet$ with coefficients in $M$ is the $i$th cohomology of the cochain complex $\Hom_R(C_\bullet,M)$ and is denoted by $\H^i(C_\bullet;M)$. There are analogous definitions when $C_\bullet$ consists of right $R$-modules.

If $C_\bullet$ and $D_\bullet$ are chain complexes, then a \emph{chain map} $f_\bullet \colon C_\bullet \rightarrow D_\bullet$ is a sequence of morphisms $f_i \colon C_i \to D_i$ such that $f_{i-1} \partial_i = \partial_i f_i$ for all $i$. Two chain maps $f_\bullet, g_\bullet \colon C_\bullet \rightarrow D_\bullet$ are said to be \emph{chain homotopic} if there is a sequence of morphisms $h_i \colon C_i \rightarrow D_{i+1}$ such that $f_i - g_i = h_{i-1} \partial_i + \partial_{i+1} h_i$ for all $i$. A chain map $f_\bullet \colon C_\bullet \rightarrow D_\bullet$ is a \emph{chain homotopy equivalence} if there is chain map $g_\bullet \colon D_\bullet \rightarrow C_\bullet$ such that $f_\bullet g_\bullet$ and $g_\bullet f_\bullet$ are each chain homotopic to the identity maps. In this case, we say that $C_\bullet$ and $D_\bullet$ are chain homotopic or of the same chain homotopy type.

Suppose that $C_\bullet$ is a chain complex of projective $R$-modules. We say that $C_\bullet$ is \emph{finitely dominated} or of \emph{finite type} if $C_\bullet$ is chain homotopic to a chain complex of finitely generated projective $R$-modules. If $C_i = 0$ for all $i < 0$ and $n$ is an integer, we say that $C_\bullet$ is of \emph{finite $n$-type} if $C_\bullet$ is chain homotopic to a chain complex $D_\bullet$ of projective modules such that $D_i$ is finitely generated for $i \leqslant n$ and $D_i = 0$ for all $i < 0$. A group $G$ is said to be of type $\FP_n(R)$ (resp.\ $\FP_\infty(R)$, resp.\ $\FP(R)$) if the trivial $R[G]$-module $R$ admits a projective resolution which is finitely generated in degrees at most $n$ (resp.\ finitely generated in all degrees, resp.\ of finite length and finitely generated in all degrees).

Brown gives the following characterisation of finite domination in terms of the (co)homology functors $\H_i(C_\bullet;-)$ and $\H^i(C_\bullet;-)$.

\begin{thm}[{\cite[Theorem 2]{Brown_HomCritFinite}}]\label{thm:Brown_crit_finiteness}
    Let $C_\bullet$ be a chain complex of projective $R$-modules such that $C_i = 0$ for all $i < 0$. The following are equivalent:
    \begin{enumerate}
        \item $C_\bullet$ is of finite $n$-type;
        \item for all directed systems of $R$-modules $\{M_j\}_{j\in J}$, the natural map
        \[
            \varinjlim \H^i(C_\bullet; M_j) \longrightarrow \H^i(C_\bullet; \varinjlim M_j)
        \]
        is an isomorphism for $i < n$ and a monomorphism for $i = n$;
        \item for all directed systems $\{M_j\}_{j\in J}$ of $R$-modules with $\varinjlim M_j = 0$, we have $\varinjlim \H^i(C_\bullet; M_j) = 0$ for $i \leqslant n$;
        \item for any family $\{M_j\}_{j \in J}$ of $R$-modules, the natural map 
        \[
            \H_i(C_\bullet; \prod M_j) \longrightarrow \prod \H_i(C_\bullet; M_j)
        \]
        is an isomorphism for $i < n$ and an epimorphism for $i = n$;
        \item for any index set $J$, the natural map $\H_i(C_\bullet; \prod_J R) \rightarrow \prod_J \H_i(C_\bullet; R)$ is an isomorphism for $i < n$ and an epimorphism for $i = n$.
    \end{enumerate}
\end{thm}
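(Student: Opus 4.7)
The approach is to establish two cycles of implications: $(1) \Rightarrow (2) \Rightarrow (3) \Rightarrow (1)$ and $(1) \Rightarrow (4) \Rightarrow (5) \Rightarrow (1)$. The implications $(1) \Rightarrow (2)$ and $(1) \Rightarrow (4)$ are routine: after replacing $C_\bullet$ by a chain-homotopy equivalent $D_\bullet$ with $D_i$ finitely generated projective for $i \leqslant n$, which does not affect (co)homology, the functors $\Hom_R(D_i, -)$ commute with direct limits and $- \otimes_R D_i$ commutes with direct products, since finitely generated projectives are finitely presented. This yields isomorphisms at the (co)chain level in degrees strictly below $n$, with the mono/epi at degree $n$ reflecting that $D_{n+1}$ need not be finitely generated. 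The implications $(2) \Rightarrow (3)$ and $(4) \Rightarrow (5)$ are tautological specialisations to systems with $\varinjlim M_j = 0$ and to families with every $M_j = R$, respectively.

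The substance of the proof lies in $(3) \Rightarrow (1)$ and $(5) \Rightarrow (1)$, which proceed by dual arguments. The strategy is to build a chain complex $D_\bullet$ of projective $R$-modules with $D_i$ finitely generated for $i \leqslant n$ and $D_i = 0$ for $i < 0$, equipped with a chain map $f_\bullet \colon D_\bullet \to C_\bullet$ that is a quasi-isomorphism; since both complexes are bounded below and consist of projectives, $f_\bullet$ will then automatically be a chain homotopy equivalence. I would proceed by induction on the degree. At the base, I would use $(3)$ or $(5)$ to show that $\H_0(C_\bullet)$ is finitely generated, and take $D_0$ to be a finitely generated free module whose image in $C_0$ lifts a finite generating set. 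Having constructed $D_{\leqslant k}$ for some $k < n$ together with a chain map inducing an isomorphism on $\H_i$ for $i < k$ and a surjection on $\H_k$, I would again invoke $(3)$ or $(5)$ to show that the kernel of $\H_k(D_{\leqslant k}) \to \H_k(C_\bullet)$ is finitely generated, and define $D_{k+1}$ to be a finitely generated free module surjecting onto this kernel. In degrees above $n$, one fills in with arbitrary (not necessarily finitely generated) projectives by the standard inductive construction of a projective resolution, completing the chain map to a quasi-isomorphism.

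The main obstacle is the finite generation step in the inductive construction. Conditions $(3)$ and $(5)$ are phrased in terms of direct limits or products indexed over arbitrary sets, and one must extract from them the fact that a concrete homology module (or a kernel inside one) is finitely generated. I expect this to go by contradiction: if a submodule $N$ of the relevant $\H_i$ is not finitely generated, one chooses a cofinal chain of finitely generated submodules of $N$ and uses them either to form a directed system of quotients of $N$ with vanishing colimit (for $(3) \Rightarrow (1)$), or to index a product $\prod_J R$ whose image picks out an element of $\prod \H_i(C_\bullet)$ not in the image of $\H_i(C_\bullet; \prod_J R)$ (for $(5) \Rightarrow (1)$), contradicting the hypothesis. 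Translating these abstract functorial conditions into concrete finiteness of specific submodules, and then performing the chain-level lifts compatibly with the induction, is the technical heart of the argument.
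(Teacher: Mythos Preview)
The paper does not give its own proof of this statement: \cref{thm:Brown_crit_finiteness} is quoted directly from Brown's paper \cite[Theorem~2]{Brown_HomCritFinite} as a background result, with no argument supplied. So there is no ``paper's proof'' to compare against here.

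That said, your outline is essentially Brown's original strategy and is sound. The easy implications are exactly as you describe. For $(5)\Rightarrow(1)$ (and dually $(3)\Rightarrow(1)$), the inductive construction of a chain map $f_\bullet\colon D_\bullet\to C_\bullet$ with $D_i$ finitely generated projective for $i\leqslant n$ is the right idea, and the fact that a quasi-isomorphism between bounded-below complexes of projectives is a chain homotopy equivalence closes the argument. The one place where your sketch is a little loose is the finite-generation step in the induction: rather than working directly with submodules of $\H_k$, it is cleaner to pass to the mapping cone of the partial map $D_{\leqslant k}\to C_\bullet$, observe that condition~(5) (resp.~(3)) is inherited by the cone in the relevant degrees, and then invoke the elementary characterisation that an $R$-module $M$ is finitely generated if and only if $(\prod_J R)\otimes_R M\to\prod_J M$ is surjective for every $J$ (resp.\ the dual statement with direct limits of quotients). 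This is exactly what Brown does, and it avoids having to control a submodule of $\H_k$ directly.
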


In particular, $C_\bullet$ is finitely dominated if and only if $\H^i(C_\bullet; - )$ preserves direct limits if and only if $\H_i(C_\bullet; - )$ preserves direct products.

\begin{defn}[Cohomological dimension]
    The \emph{cohomological dimension} $\cd(C_\bullet)$ of a chain complex $C_\bullet$ of $R$-modules is the supremal integer $n$ such that there exists an $R$-module $M$ with $\H^n(C_\bullet;M) \neq 0$.
\end{defn}

We will always use this definition of cohomological dimension of a chain complex, but it is nice to know that it also has the following characterisation when $C_\bullet$ consists of projective modules.

\begin{lem}
    If $C_\bullet$ is a chain complex of projective $R$-modules, then $C_\bullet$ is chain homotopy equivalent to a complex $D_\bullet$ such that $D_i = 0$ for all $i > n$ if and only if $\cd(C_\bullet) \leqslant n$.
\end{lem}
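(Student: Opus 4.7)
For the only-if direction, I plan to note that chain homotopy invariance of cohomology gives $\H^i(C_\bullet; M) \cong \H^i(D_\bullet; M)$ for any $R$-module $M$, and that if $D_i = 0$ for $i > n$ then the cochain complex $\Hom_R(D_\bullet, M)$ vanishes in degrees above $n$; this forces $\cd(C_\bullet) \leqslant n$.

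For the if direction, my plan is to iteratively truncate $C_\bullet$ from the top. The argument will assume $C_\bullet$ is bounded above, which is the setting relevant for this paper; an unbounded-above $C_\bullet$ of projectives with $\cd \leqslant n$ would first have to be reduced to this case. Letting $N = \sup\{i : C_i \neq 0\}$, the case $N \leqslant n$ is trivial, so I would assume $N > n$. Since $C_{N+1} = 0$ and $\H^N(C_\bullet; M) = 0$ for every $M$, the map $\partial_N^* \colon \Hom_R(C_{N-1}, M) \to \Hom_R(C_N, M)$ will be surjective for every $M$; specialising to $M = C_N$ and choosing a preimage of $\id_{C_N}$ will produce a splitting $s \colon C_{N-1} \to C_N$ of $\partial_N$, yielding a direct sum decomposition $C_{N-1} = \partial_N(C_N) \oplus \ker(s)$ with $\ker(s)$ projective as a summand of the projective module $C_{N-1}$.

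I would then define $D_\bullet$ by $D_i = C_i$ for $i \leqslant N-2$, $D_{N-1} = \ker(s)$, and $D_i = 0$ for $i \geqslant N$, with boundaries inherited from $C_\bullet$, and exhibit the inclusion $\iota \colon D_\bullet \hookrightarrow C_\bullet$ together with the projection $\pi \colon C_\bullet \to D_\bullet$ (identity in low degrees, $x \mapsto x - \partial_N s(x)$ in degree $N-1$, zero above) as a chain homotopy equivalence. Verifying that $\iota$ and $\pi$ are chain maps, that $\pi \iota = \id_{D_\bullet}$, and that $h_{N-1} = s$ (extended by zero elsewhere) yields a chain homotopy $\partial h + h \partial = \id_{C_\bullet} - \iota \pi$ is all mechanical and follows from $s \partial_N = \id_{C_N}$ and $\partial \partial = 0$; the only non-trivial identities occur in degrees $N-1$ and $N$, where they read $\partial_N s = \id_{C_{N-1}} - \iota_{N-1} \pi_{N-1}$ and $s \partial_N = \id_{C_N}$ respectively. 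Since $\cd(D_\bullet) = \cd(C_\bullet) \leqslant n$ but the top non-zero degree of $D_\bullet$ has dropped by one, iterating finitely many times will complete the proof. The one substantive ingredient in the whole argument is the extraction of the splitting $s$ from the vanishing of top-dimensional cohomology; the rest is bookkeeping.
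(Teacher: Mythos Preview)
Your only-if direction matches the paper's. For the if direction, the core idea---use vanishing of cohomology in a degree to manufacture a splitting and peel off a contractible piece---is exactly the paper's, but you run it in the opposite direction: you start at the top degree $N$ and iterate downward, which forces $C_\bullet$ to be bounded above. You flag this, but the reduction you gesture at (``would first have to be reduced to this case'') is not innocuous: showing that an unbounded-above complex of projectives with $\cd \leqslant n$ is chain homotopy equivalent to a bounded-above one is precisely the content of the lemma, so this is a genuine gap for the statement as written.

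The paper sidesteps the issue by splitting at degree $n$ rather than at the (possibly nonexistent) top. With $M = \im(\partial_{n+1}) \leqslant C_n$, vanishing of $\H^{n+1}(C_\bullet; M)$ gives a retraction $C_n \to M$, so $C_n = M \oplus P$ and $C_\bullet$ decomposes as the direct sum of a complex concentrated in degrees $\leqslant n$ (with $P$ in degree $n$) and a complex $E_\bullet$ concentrated in degrees $\geqslant n$. One then shows $E_\bullet$ is null-homotopic by the \emph{same} splitting trick, now iterating upward from degree $n$: each step splits off a summand of the form $\{0 \to L \to L \to 0\}$, and $E_\bullet$ is the direct sum of all of these (in any fixed degree only two summands contribute, so the infinite direct sum is honest). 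This handles unbounded $C_\bullet$ without any preliminary reduction. Your chain-homotopy bookkeeping in the bounded case is correct; the only problem is that your iteration needs a place to start.
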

\begin{proof}
    If $C_\bullet$ is chain homotopy equivalent to $D_\bullet$ as in the statement, then it is clear that $\H^i(C_\bullet;M) = 0$ for all $R$-modules $M$ and all $i > n$, since chain homotopy equivalences induce isomorphisms on homology. Hence $\cd(C_\bullet) \leqslant n$.

    Conversely, suppose that $\cd(C_\bullet) \leqslant n$. Let $M = \im(\partial_{n+1}) \leqslant C_n$. By assumption, the cocycle $z \colon C_{n+1} \xrightarrow{\partial} M$ is zero in $\H^{n+1}(C_\bullet;M)$, so there is a cochain $c \colon C_n \to M$ such that $c \partial = z$. This implies that $M$ is a direct summand of $C_n$; write $C_n = M \oplus P$. Then $C_\bullet$ splits as the direct sum of the chain complexes
    \[
        \{ \cdots \to C_{n+2} \to C_{n+1} \to M \to 0 \to \cdots \} \oplus \{ \cdots \to 0 \to P \to C_{n-1} \to C_{n-2} \to \cdots \} 
    \]
    of projectives modules, where $M$ and $P$ both lie in degree $n$. We claim that the complex on the left (call it $E_\bullet$) is null-homotopic, which will prove the lemma.

    It is an easy exercise to check that $\H^i(E_\bullet; L) = 0$ for all integers $i$ and all $R$-modules $L$. Let $N = \im(\partial_{n+2})$ and consider the short exact sequence
    \[
        0 \longrightarrow N \longrightarrow C_{n+1} \longrightarrow M \longrightarrow 0.
    \]
    By assumption, the cocycle $w \colon C_{n+2} \xrightarrow{\partial} N$ is zero in cohomology, so there is some $d \colon C_{n+1} \rightarrow N$ such that $w = d\partial$. Hence, the short exact sequence above is split, so
    \[
        E_\bullet = \{\dots \to C_{n+2} \to N \to 0 \to \cdots\} \oplus \{\cdots \to 0 \to M \to M \to 0 \to \cdots \}.
    \]
    Again, the complex $\{\dots \to C_{n+2} \to N \to 0 \to \cdots\}$ has vanishing cohomology with all coefficients, so we can split off $\{\cdots \to 0 \to N \to N \to 0 \to \cdots\}$ as a direct summand, whose complement has vanishing cohomology with all coefficients. Continuing in this way show shows that $E_\bullet$ is a direct sum of complexes of the form $\{0 \to L \to L \to 0\}$ and is therefore null-homotopic. \qedhere
\end{proof}

The Eilenberg--Watts theorem will be useful to us in induction arguments, including in the proof of \cref{lem:acyclic_ring_module} just below. Morally, it says that tensoring is the only right-exact additive functor that preserves direct sums.

\begin{thm}[{\cite{Watts_EWtheorem}}]\label{thm:EW_thm}
    Let $R$ and $S$ be rings, and let $F \colon \mathsf{Mod}_R \rightarrow \mathsf{Mod}_S$ be an additive functor. If $F$ is right-exact and preserves arbitrary direct sums, then there is a natural isomorphism
    \[
        F(-) \ \cong \ - \otimes_R F(R),
    \]
    where $R$ is viewed as a right module over itself via multiplication, and $F(R)$ is viewed as a left $R$-module via $r \cdot x := F(m_r)(x)$ (and $m_r$ is the left-multiplication by $r$ map).
\end{thm}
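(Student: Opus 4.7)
The plan is to construct an explicit natural transformation $\eta \colon - \otimes_R F(R) \to F(-)$ and then check that $\eta_M$ is an isomorphism by reducing from arbitrary $M$ to free modules to $R$ itself, using that both functors are right-exact and preserve direct sums.

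First I would set up the candidate map. For each $m \in M$ (where $M$ is a right $R$-module), the right-multiplication map $\phi_m \colon R \to M$, $r \mapsto mr$, is a morphism of right $R$-modules, so $F(\phi_m) \colon F(R) \to F(M)$ is defined. Send $m \otimes x \in M \otimes_R F(R)$ to $F(\phi_m)(x) \in F(M)$. To see that this descends to the tensor product over $R$, I would check the balancing relation $\phi_{mr} = \phi_m \circ m_r$ (both send $s \mapsto mrs$), apply $F$, and use the definition $r \cdot x = F(m_r)(x)$ of the left $R$-action on $F(R)$. Naturality in $M$ is immediate from the identity $\phi_{f(m)} = f \circ \phi_m$ for any morphism $f \colon M \to N$ of right $R$-modules.

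Next I would verify that $\eta_R \colon R \otimes_R F(R) \to F(R)$ is an isomorphism: it sends $r \otimes x$ to $F(\phi_r)(x) = F(m_r)(x) = r \cdot x$, which is the standard isomorphism $R \otimes_R F(R) \cong F(R)$. Since both $- \otimes_R F(R)$ and $F$ preserve arbitrary direct sums (the former always, the latter by hypothesis) and $\eta$ is natural, it follows that $\eta_{\bigoplus_I R}$ is an isomorphism for every free right $R$-module.

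Finally, for a general right $R$-module $M$, I would pick a free presentation $\bigoplus_J R \to \bigoplus_I R \to M \to 0$ and apply both functors to obtain a commutative diagram with exact rows (right-exactness of $- \otimes_R F(R)$ is automatic, and right-exactness of $F$ is hypothesised). The naturality squares connecting the two rows via $\eta$ make the first two vertical arrows isomorphisms by the previous paragraph, so the five lemma (or a direct diagram chase, since only the right end is at issue) forces $\eta_M$ to be an isomorphism as well.

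I do not expect serious obstacles: the main subtlety is purely bookkeeping, namely keeping track of whether $R$ acts on $F(R)$ from the left or the right, and making sure that the definition $r \cdot x = F(m_r)(x)$ genuinely gives a left $R$-module structure (which requires $F(m_r m_s) = F(m_r) F(m_s)$, i.e.\ functoriality of $F$, together with $m_r m_s = m_{rs}$). Once the bookkeeping is correct, the reduction from arbitrary modules to free modules to $R$ is the standard "any right-exact, direct-sum-preserving functor is determined by its value on $R$" argument.
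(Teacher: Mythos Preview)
The paper does not actually prove this theorem; it is stated with a citation to Watts's original article and used as a black box. Your proposal is correct and is essentially the classical argument (construct the natural map via $m\otimes x\mapsto F(\phi_m)(x)$, check it for $R$, propagate to free modules via direct-sum preservation, then to all modules via right-exactness and the five lemma), so there is nothing to compare against in the paper itself.
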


\begin{lem}\label{lem:acyclic_ring_module}
    Let $R$ be a ring, let $C_\bullet$ be a chain complex of projective left $R$-modules and let $m$ and $n$ be integers.
    \begin{enumerate}
        \item\label{item:homol_range} Suppose $C_i = 0$ for all $i < 0$. If $\H_i(C_\bullet; R) = 0$ for all $i \leqslant n$, then $\H_i(C_\bullet;M) = 0$ for all $i \leqslant n$ and all $R$-modules $M$.
        \item\label{item:cohomol_range} Suppose $C_i = 0$ for all $i > n$, that $\H^i(C_\bullet;-)$ commutes with direct limits for all $i \geqslant m$, and that $\H^i(C_\bullet; R) = 0$ for all $i \geqslant m$. Then $\H^i(C_\bullet;M) = 0$ for all $i \geqslant m$ and all $R$-modules $M$.
    \end{enumerate}
\end{lem}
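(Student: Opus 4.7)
The plan is a dimension-shift argument via free resolutions, symmetric between the two parts.

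For part \ref{item:cohomol_range}, the first and main step is to upgrade the vanishing $\H^i(C_\bullet; R) = 0$ for $i \geqslant m$ to the analogous vanishing with coefficients in any free left $R$-module $F$. Since $\Hom_R$ does not commute with direct sums in the second argument, this is where the direct-limit hypothesis is used crucially: writing $F \cong \varinjlim R^{J_0}$ as the filtered colimit of its finitely generated free submodules, the cohomology $\H^i(C_\bullet; R^{J_0})$ is a finite direct sum of copies of $\H^i(C_\bullet; R) = 0$, and the hypothesis passes this vanishing through the colimit to yield $\H^i(C_\bullet; F) = 0$ for $i \geqslant m$.

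For the second step, I would take a free resolution $\cdots \to F_1 \to F_0 \to M \to 0$ and form syzygies $\Omega^k M := \ker(F_{k-1} \to F_{k-2})$ (with $\Omega^0 M = M$). Since each $C_j$ is projective, applying $\Hom_R(C_\bullet, -)$ to each short exact sequence $0 \to \Omega^{k+1} M \to F_k \to \Omega^k M \to 0$ preserves exactness, and the resulting long exact sequence, combined with the first step, yields an isomorphism $\H^i(C_\bullet; \Omega^k M) \cong \H^{i+1}(C_\bullet; \Omega^{k+1} M)$ for every $i \geqslant m$. Iterating $n - i + 1$ times shifts the cohomology into degree $n+1$, where it vanishes since $C_{n+1} = 0$; this forces $\H^i(C_\bullet; M) = 0$.

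Part \ref{item:homol_range} is handled by a dual but simpler argument: for any free right $R$-module $F$, commutation of the tensor product with direct sums gives $\H_i(C_\bullet; F) \cong \bigoplus \H_i(C_\bullet; R) = 0$ for $i \leqslant n$, with no direct-limit hypothesis needed. The analogous long exact sequence from a free resolution of $M$ then yields $\H_i(C_\bullet; M) \cong \H_{i-1}(C_\bullet; \Omega^1 M)$ for $i \leqslant n$, and iterating shifts the homology into degree $-1$, where it vanishes since $C_{-1} = 0$. (One could alternatively iterate down to degree $0$ and invoke \cref{thm:EW_thm} to conclude $\H_0(C_\bullet; N) \cong N \otimes_R \H_0(C_\bullet; R) = 0$ for any $N$.) The main obstacle is the first step of part \ref{item:cohomol_range}: without the direct-limit hypothesis, the passage from $R$-coefficients to free-module coefficients fails, and the entire dimension shift collapses at its first iteration.
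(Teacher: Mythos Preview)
Your argument is correct, and it takes a different route from the paper's for part \ref{item:cohomol_range}. The paper proceeds by reverse induction on the degree: assuming $\H^i(C_\bullet;-)$ vanishes identically, the long exact sequence shows $\H^{i-1}(C_\bullet;-)$ is right exact, and then the Eilenberg--Watts theorem (\cref{thm:EW_thm}) identifies $\H^{i-1}(C_\bullet;-)$ with $\H^{i-1}(C_\bullet;R)\otimes_R -$, which is zero by hypothesis. Your approach instead first bootstraps from $R$ to arbitrary free modules using the direct-limit hypothesis, and then dimension-shifts via syzygies of a free resolution of $M$ until the cohomology lands above degree $n$. The paper's argument is slicker and pinpoints exactly where the direct-limit hypothesis enters (it is the hypothesis of Eilenberg--Watts), but it relies on that external theorem; your argument is more elementary and self-contained, and makes the role of the bound $C_i = 0$ for $i > n$ more transparent. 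For part \ref{item:homol_range} the paper simply cites \cite[Lemma 2]{Brown_HomCritFinite}, so your dual dimension-shift actually supplies a proof where the paper gives none.
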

\begin{proof}
    For \ref{item:homol_range}, see \cite[Lemma 2]{Brown_HomCritFinite}. For \ref{item:cohomol_range}, we will assume that $n \geqslant m$, since otherwise the statement is vacuous. We will prove that $\H^i(C_\bullet; M) = 0$ for all $R$-modules $M$ by reverse induction on $i$, starting with $i = n+1$ and terminating with $i = m$. There is nothing to prove in the base case, since $C_{n+1} = 0$ and therefore $\H^{n+1}(C_\bullet;M)$ trivially vanishes for all modules $M$.

    Suppose $\H^i(C_\bullet;M) = 0$ for all $R$-modules $M$ and for some $i > m$. Let 
    \[
        0 \longrightarrow M_0 \longrightarrow M_1 \longrightarrow M_2 \longrightarrow 0
    \]
    be a short exact sequence of modules. The associated long exact sequence in cohomology contains the portion 
    \[
        \H^{i-1}(C_\bullet; M_1) \longrightarrow \H^{i-1}(C_\bullet; M_2) \longrightarrow \H^i(C_\bullet; M_0) = 0,
    \]
    so $H^{i-1}(C_\bullet;-)$ is right exact. The functor $\H^{i-1}(C_\bullet;-)$ also preserves direct limits by assumption, so the Eilenberg--Watts theorem implies that $\H^{i-1}(C_\bullet;-)$ is naturally isomorphic to the functor $\H^{i-1}(C_\bullet;R) \otimes_R -$, which is zero by assumption. \qedhere
\end{proof}

\section{Twisted Laurent series and finite domination}\label{sec:fin_dom}

Let $R$ be a ring and let $\alpha \colon R \rightarrow R$ be a ring automorphism. The $\alpha$-twisted Laurent polynomial ring in one variable is denoted by $R_\alpha[t\inv,t]$. This is the Laurent polynomial ring over $R$, where we impose the rule $r t^n = t^n \alpha^n(r)$ for all $r \in R$ and all $n \in \Z$. 

Let $R_\alpha [t\inv,t \rrbracket$ denote the ring of Laurent series in the variable $t$, i.e.\ the ring of expressions of the form $\sum_{n = k}^\infty r_n t^n$ with $r_n \in R$ for all $n \geqslant k$, where $k$ is some integer. Multiplication is defined on $R_\alpha [t\inv,t \rrbracket$ by formally extending the multiplication on $R_\alpha[t\inv,t]$. Similarly, let $R_\alpha \llbracket t\inv,t]$ be the ring of formal series of the form $\sum_{n = -\infty}^k r_n t^n$ for some $k \in \Z$, i.e.\ the ring of Laurent series in the variable $t\inv$.

\begin{ex}[The Novikov ring]\label{ex:Novikov}
    The main example of the construction above will come from groups and maps to $\Z$. Let $\chi \colon G \rightarrow \R$ be an homomorphism with kernel $N$ and let $R$ be a ring. The \emph{Novikov ring} associated to this data is 
    \[
        \nov{R[G]}{\chi} \ = \ \left\{ \sum r_g g \ : \ |\{g : r_g \neq 0\} \cap \chi\inv(\left]-\infty, t \right])| < \infty \ \text{for all} \ t \in \R  \right\}.
    \]
    Now suppose the image of $\chi$ is infinite cyclic and let $t \in G$ map to a generator of $\chi(G)$. Denoting by $\alpha$ the automorphism of $R[N]$ induced by the conjugation action of $t$, we have $\nov{R[G]}{\chi} \cong R[N]_\alpha[t\inv, t\rrbracket$.
\end{ex}

In view of \cref{ex:Novikov}, we will often call the (co)homology of a complex $C_\bullet$ of $R_\alpha[t\inv,t]$-modules with coefficients in $R_\alpha[t\inv,t\rrbracket$ the Novikov cohomology of $C_\bullet.$

Given a right $R$-module $M$, let
\[
    M_\alpha\llbracket t\inv,t \rrbracket \ = \ \prod_{i\in\Z} Mt^i.
\]
The elements of $M_\alpha\llbracket t\inv,t \rrbracket$ are thought of as bi-infinite series $\sum_{i = -\infty}^\infty m_i t^i$, where $m_i \in M$ for each $i \in \Z$. We endow $M_\alpha\llbracket t\inv,t \rrbracket$ with the structure of a right $R_\alpha[t\inv,t]$-module by defining
\[
    \left(\sum_{i=-\infty}^\infty m_i t^i \right) \cdot rt^n \ = \ \sum_{i=-\infty}^\infty (m_i \alpha^{-i}(r)) t^{i+n}.
\]
We also define $M_\alpha[t\inv,t \rrbracket$ to be the submodule of $M_\alpha\llbracket t\inv,t \rrbracket$ consisting of series of the form $\sum_{i=k}^{\infty} m_i t^i$ for some $k \in \Z$.

Let $M_\alpha[t\inv,t\rrbracket$ be the $R_\alpha[t\inv,t]$-submodule of series of the form $\sum_{n = k}^{\infty} m_n t^n$ for some $k \in \Z$. Similarly, let $M_\alpha\llbracket t\inv,t]$ be the submodule of series of the form $\sum_{n = -\infty}^{k} m_n t^n$ for some $k \in \Z$. Crucially, $M_\alpha[t\inv,t\rrbracket$ is a right $R_\alpha[t\inv,t\rrbracket$-module, and $M_\alpha\llbracket t\inv,t]$ is a right $R_\alpha\llbracket t\inv,t]$-module. Finally, let $M_\alpha[t\inv,t]$ be the right $R_\alpha[t\inv,t]$-module of finitely supported sums $\sum_i m_i t^i$.

\begin{obs}\label{obs:SES}
    There is a short exact sequence
    \[
        \begin{tikzcd}[row sep = 0]
            0 \arrow[r] & M_\alpha[t\inv,t] \arrow[r] & M_\alpha\llbracket t\inv,t] \oplus M_\alpha[t\inv,t\rrbracket \arrow[r] & M_\alpha\llbracket t\inv,t \rrbracket \arrow[r] & 0 \\
            & x \arrow[r, maps to] & {(x,x)} & & \\
            & & {(x,y)} \arrow[r, maps to] & x-y &
        \end{tikzcd}
    \]
    of right $R_\alpha[t\inv,t]$-modules.
\end{obs}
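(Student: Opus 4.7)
The plan is to verify the claimed short exact sequence by checking, in order, that both maps are well-defined $R_\alpha[t^{-1},t]$-module homomorphisms, that the left-hand map is injective, that the image of the left map equals the kernel of the right map, and that the right-hand map is surjective. Each of these is a direct consequence of the definitions of the four modules as subspaces of the bi-infinite product $\prod_{i\in\Z} Mt^i$, so no clever argument is required.

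More concretely, I would first note that the diagonal inclusion $x \mapsto (x,x)$ and the difference map $(x,y) \mapsto x-y$ are obviously $R$-linear and compatible with right multiplication by $t$, which together generate $R_\alpha[t^{-1},t]$ as a ring; being inclusions (or differences of inclusions) into a common ambient module $M_\alpha\llbracket t^{-1},t\rrbracket$, they preserve the twisted right action defined in the previous paragraph. Injectivity of $x \mapsto (x,x)$ is immediate.

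For exactness in the middle, the kernel of $(x,y) \mapsto x-y$ consists of pairs with $x = y$; since $x$ lies in $M_\alpha\llbracket t^{-1},t]$ (support bounded above) and $y = x$ lies in $M_\alpha[t^{-1},t\rrbracket$ (support bounded below), the common element has support bounded on both sides, hence lies in $M_\alpha[t^{-1},t]$. Conversely any element of $M_\alpha[t^{-1},t]$ gives such a pair under the diagonal. For surjectivity, given $z = \sum_{i\in\Z} m_i t^i \in M_\alpha\llbracket t^{-1},t\rrbracket$, decompose it as $z = z_{\leqslant 0} + z_{>0}$ where $z_{\leqslant 0} = \sum_{i\leqslant 0} m_i t^i \in M_\alpha\llbracket t^{-1},t]$ and $z_{>0} = \sum_{i > 0} m_i t^i \in M_\alpha[t^{-1},t\rrbracket$; then $(z_{\leqslant 0}, -z_{> 0})$ is a preimage of $z$.

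The only potential pitfall is bookkeeping with the twisted module structures to make sure the two direct summands of the middle term are indeed submodules over the appropriate rings and that the maps respect these actions, but this is entirely formal given the definitions already laid out before the observation. I would therefore present the argument in a few lines, perhaps by just writing down the explicit splitting of an arbitrary bi-infinite series as above and leaving the verification of module-map properties to the reader.
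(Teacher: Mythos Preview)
Your verification is correct and is exactly what one would do to check the observation: the diagonal is injective, its image is the set of pairs $(x,x)$ with $x$ having support bounded above and below, and the splitting $z = z_{\leqslant 0} + z_{>0}$ gives surjectivity. The paper in fact gives no proof at all---the statement is recorded as an \emph{observation} and left to the reader---so your few-line argument is precisely the intended (implicit) justification.
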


Let $\iota \colon R \rightarrow R_\alpha[t\inv,t]$ be the natural inclusion. If $M$ is an $R_\alpha[t\inv,t]$-module, denote the restricted $R$-module by $\iota^! M$.

\begin{lem}[Shapiro's Lemma]
    Let $C_\bullet$ be a chain complex of $R_\alpha[t\inv,t]$-modules. For any right $R$-module $M$, we have
    \begin{enumerate}
        \item\label{item:shapiro} $\H_i(C_\bullet;M_\alpha[t\inv,t]) \cong \H_i(\iota^! C_\bullet; M)$ when $C_\bullet$ is a complex of left modules, and
        \item\label{item:co_shapiro} $\H^i(C_\bullet;M_\alpha\llbracket t\inv,t\rrbracket) \cong \H^i(\iota^! C_\bullet; M)$ when $C_\bullet$ is a complex of right modules
    \end{enumerate}
    for all $i \in \Z$.
\end{lem}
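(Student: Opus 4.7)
The plan is to follow the standard Shapiro-style argument: identify the two coefficient modules as an induced, respectively coinduced, module along the inclusion $\iota \colon R \to R_\alpha[t\inv,t]$, and then invoke associativity of the tensor product (respectively the tensor--hom adjunction) to move the adjoint functor across the (co)chain complex.

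For part (1), I would first establish a natural isomorphism of right $R_\alpha[t\inv,t]$-modules
\[
    M_\alpha[t\inv,t] \ \cong \ M \otimes_R R_\alpha[t\inv,t],
\]
where $R_\alpha[t\inv,t]$ is viewed as a left $R$-module via $\iota$ (so that the tensor product makes sense) and as a right $R_\alpha[t\inv,t]$-module via right multiplication. Concretely, the map sends $m t^i$ to $m \otimes t^i$; the right $R_\alpha[t\inv,t]$-module structures agree by direct computation using the defining relation $t^i r = \alpha^{-i}(r) t^i$ together with the definition of the action on $M_\alpha[t\inv,t]$. Since $R_\alpha[t\inv,t]$ is free as a left $R$-module on $\{t^i : i \in \Z\}$, the map is well defined. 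Associativity of the tensor product then yields a natural isomorphism of chain complexes
\[
    M_\alpha[t\inv,t] \otimes_{R_\alpha[t\inv,t]} C_\bullet \ \cong \ M \otimes_R \iota^! C_\bullet,
\]
from which part (1) follows on taking homology.

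For part (2), I would dually identify
\[
    M_\alpha\llbracket t\inv,t\rrbracket \ \cong \ \Hom_R(R_\alpha[t\inv,t], M)
\]
as right $R_\alpha[t\inv,t]$-modules, where $R_\alpha[t\inv,t]$ is viewed as a right $R$-module via $\iota$, and the right $R_\alpha[t\inv,t]$-action on the Hom is induced from the left-multiplication action of $R_\alpha[t\inv,t]$ on itself via the formula $(\phi \cdot s)(x) = \phi(sx)$. Since $R_\alpha[t\inv,t] = \bigoplus_i t^i R$ is a free right $R$-module, a homomorphism $\phi$ is determined by the family $(\phi(t^{-i}))_{i \in \Z} \in M^\Z$, and the assignment $\phi \mapsto \sum_i \phi(t^{-i}) t^i$ provides the identification; equivariance under the twisted right action is a direct calculation of the same flavour as in part (1). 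The tensor--hom adjunction then yields a natural isomorphism of cochain complexes
\[
    \Hom_{R_\alpha[t\inv,t]}\bigl(C_\bullet, \Hom_R(R_\alpha[t\inv,t], M)\bigr) \ \cong \ \Hom_R(\iota^! C_\bullet, M),
\]
and part (2) follows on taking cohomology.

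The only real technicality is verifying that the two module-theoretic identifications above are genuinely equivariant for the twisted right $R_\alpha[t\inv,t]$-action, which amounts to careful bookkeeping with the exponents of $\alpha$ and the correct (re)indexing convention on the coefficient sequences. Once these identifications are in place, both parts reduce to the classical associativity and tensor--hom adjunction isomorphisms applied degreewise, so I expect the principal obstacle to be notational rather than substantive.
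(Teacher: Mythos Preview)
Your proposal is correct and essentially matches the paper's argument. For part (1) the paper says exactly what you say; for part (2) the paper writes down the adjunction map $\Phi(\varphi)(l)=\varphi(l)_0$ and its inverse $\Psi(\psi)(l)=\sum_i \psi(lt^i)t^{-i}$ explicitly rather than first identifying $M_\alpha\llbracket t\inv,t\rrbracket\cong\Hom_R(R_\alpha[t\inv,t],M)$ and then invoking the abstract tensor--hom adjunction, but unwinding your route gives precisely these formulas.
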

\begin{proof}
    The isomorphism in \ref{item:shapiro} is an immediate consequence of the isomorphism $M_\alpha[t\inv, t] \cong M \otimes_R R_\alpha[t\inv,t]$. For \ref{item:co_shapiro}, there is an isomorphism
    \[
        \Phi \colon \Hom_{R_\alpha[t\inv,t]}(L, M_\alpha\llbracket t\inv,t \rrbracket) \rightarrow \Hom_R(\iota^! L,M)
    \]
    for any right $R_\alpha[t\inv,t]$-module $L$; it is given by $\Phi(\varphi)(l) = \varphi(l)_0$, where $l \in L$ is arbitrary. It's inverse $\Psi$ is given by $\Psi(\psi)(l) = \sum_{i=-\infty}^\infty \psi(l t^i) t^{-i}$. We leave it to the reader to verify that these isomorphisms are well-defined and that they are functorial in the first entry, i.e.\ that given a map $L_0 \rightarrow L_1$ of right $R_\alpha[t\inv,t]$-modules, the diagram 
    \[
        \begin{tikzcd}
            \Hom_{R_\alpha[t\inv,t]}(L_1, M_\alpha\llbracket t\inv,t \rrbracket) \arrow[r]\arrow[d] & \Hom_{R_\alpha[t\inv,t]}(L_0, M_\alpha\llbracket t\inv,t \rrbracket)\arrow[d] \\
            \Hom_R(\iota^! L_1,M) \arrow[r] & \Hom_R(\iota^! L_0,M)
        \end{tikzcd}
    \]
    commutes. The claimed isomorphism is then an immediate consequence of the isomorphism $\Phi$. \qedhere
\end{proof}

The power of \cref{obs:SES} is that it relates the (co)homology of $C_\bullet$ with that of $\iota^! C_\bullet$ via the following immediate consequence of Shapiro's Lemma.

\begin{cor}\label{cor:LES}
    Let $C_\bullet$ be a chain complex of projective $R_\alpha[t\inv,t]$-modules. For any right $R$-module $M$, the short exact sequence of \cref{obs:SES} induces the following long exact sequences in homology and cohomology:
    \[
        \begin{tikzcd}[column sep=tiny]
			& \cdots \arrow[r] \arrow[d, phantom, ""{coordinate, name=ZZ}] 
			& \H_i(C_\bullet;M_\alpha\llbracket t\inv,t] \oplus M_\alpha[t\inv,t\rrbracket) \arrow[r] 
			& \H_i(C_\bullet; M_\alpha\llbracket t\inv,t \rrbracket) \arrow[dll,  rounded corners, to path={ -- ([xshift=2ex]\tikztostart.east)|- (ZZ) [near end]\tikztonodes-| ([xshift=-2ex]\tikztotarget.west)-- (\tikztotarget)}] \\
			& \H_{i-1}(\iota^! C_\bullet;M) \arrow[r] & \H_{i-1}(C_\bullet;M_\alpha\llbracket t\inv,t] \oplus M_\alpha[t\inv,t\rrbracket) \arrow[r] & \cdots
		\end{tikzcd}
    \]
    \[
        \begin{tikzcd}[column sep=tiny]
			& \cdots \arrow[r] \arrow[d, phantom, ""{coordinate, name=ZZ}] 
			& \H^i(C_\bullet;M_\alpha\llbracket t\inv,t] \oplus M_\alpha[t\inv,t\rrbracket) \arrow[r] 
			& \H^i(\iota^! C_\bullet; M) \arrow[dll,  rounded corners, to path={ -- ([xshift=2ex]\tikztostart.east)|- (ZZ) [near end]\tikztonodes-| ([xshift=-2ex]\tikztotarget.west)-- (\tikztotarget)}] \\
			& \H^{i+1}(C_\bullet;M_\alpha[t\inv,t]) \arrow[r] & \H^{i+1}(C_\bullet;M_\alpha\llbracket t\inv,t] \oplus M_\alpha[t\inv,t\rrbracket) \arrow[r] & \cdots
		\end{tikzcd}
    \]
\end{cor}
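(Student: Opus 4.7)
The plan is to derive both long exact sequences by pairing the short exact sequence of \cref{obs:SES} with $C_\bullet$ and then invoking Shapiro's Lemma. The argument is essentially mechanical once one checks that the coefficient sequence stays short exact after being paired with $C_\bullet$ in each degree.

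First I would observe that each term $C_i$ of $C_\bullet$ is projective, hence flat, as an $R_\alpha[t\inv,t]$-module. Applying $-\otimes_{R_\alpha[t\inv,t]} C_i$ degree-wise to the short exact sequence of \cref{obs:SES} therefore produces a short exact sequence of chain complexes. Its associated long exact sequence in homology takes the schematic form
\[
\cdots \to \H_i(C_\bullet;\text{sub}) \to \H_i(C_\bullet;\text{mid}) \to \H_i(C_\bullet;\text{quot}) \to \H_{i-1}(C_\bullet;\text{sub}) \to \cdots,
\]
where sub, mid, and quot are the three coefficient modules appearing in \cref{obs:SES}. Substituting $\H_i(C_\bullet;M_\alpha[t\inv,t]) \cong \H_i(\iota^! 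C_\bullet; M)$ via Shapiro's Lemma part \ref{item:shapiro} turns every sub-term into the desired form and yields the first displayed long exact sequence.

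For the cohomology statement, I would use that projectivity of each $C_i$ makes $\Hom_{R_\alpha[t\inv,t]}(C_i,-)$ exact, so applying this functor degree-wise to the same short exact sequence of coefficients gives a short exact sequence of cochain complexes. The resulting long exact sequence in cohomology has the schematic form
\[
\cdots \to \H^i(C_\bullet;\text{sub}) \to \H^i(C_\bullet;\text{mid}) \to \H^i(C_\bullet;\text{quot}) \to \H^{i+1}(C_\bullet;\text{sub}) \to \cdots,
\]
and now Shapiro's Lemma part \ref{item:co_shapiro} supplies $\H^i(C_\bullet;M_\alpha\llbracket t\inv,t\rrbracket) \cong \H^i(\iota^! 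C_\bullet; M)$, converting every quot-term into the asserted form.

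There is no serious obstacle; the only bookkeeping point is that $C_\bullet$ should be interpreted as a complex of left $R_\alpha[t\inv,t]$-modules for the homology statement (so that the tensor product with the right coefficient modules is defined) and as a complex of right modules for the cohomology statement (so that $\Hom$ into the right coefficient modules produces an abelian-group cochain complex). Once these conventions are fixed, both statements follow immediately by concatenating the connecting homomorphism of the coefficient sequence with the Shapiro isomorphisms.
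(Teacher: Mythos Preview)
Your proposal is correct and matches the paper's approach exactly: the paper simply declares the corollary to be an ``immediate consequence of Shapiro's Lemma'' applied to the long exact sequence coming from \cref{obs:SES}, which is precisely the argument you spell out. Your bookkeeping remark about left versus right modules is also consistent with the conventions in the paper's statement of Shapiro's Lemma.
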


Before establishing some of the main results, we need the following key lemma.

\begin{lem}\label{lem:countable}
    Any non-zero $R_\alpha[t\inv,t \rrbracket$-module is uncountable.
\end{lem}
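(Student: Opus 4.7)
The approach I would take is to fix a non-zero element $m \in M$ and exhibit an explicit injection $\mathcal{P}(\N) \hookrightarrow M$, which immediately gives $|M| \geqslant 2^{\aleph_0}$. The motivating idea: there is an uncountable family of formal $\{0,1\}$-power series sitting inside $A := R_\alpha[t\inv,t\rrbracket$, and any two distinct such series differ by a unit of $A$, so their actions on $m$ are forced to be distinct.

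Concretely, for each subset $S \subseteq \N$, set $s_S := \sum_{n \in S} t^n \in A$, and define $\phi \colon \mathcal{P}(\N) \to M$ by $\phi(S) = s_S \cdot m$. Suppose $S \neq S'$, and let $n_0$ be the smallest element of the symmetric difference $S \triangle S'$. Then $s_S - s_{S'}$ has coefficients in $\{-1, 0, 1\}$ and lowest non-zero coefficient $\pm 1$ occurring at degree $n_0$. Because $\pm 1 \in R$ is fixed by $\alpha$, I can cleanly factor out $t^{n_0}$ to get $s_S - s_{S'} = t^{n_0}(\pm 1 + t a)$ for some $a \in A$ whose $t$-powers are all non-negative. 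I would then invoke — or briefly verify using the geometric series $(1 + tb)\inv = \sum_{k \geqslant 0}(-tb)^k$, which converges in the $t$-adic topology since $(tb)^k$ has $t$-order at least $k$ — the standard fact that an element of $A$ whose lowest non-zero coefficient is a unit of $R$ is itself a unit of $A$. Since $t$ is also a unit in $A$, this shows $s_S - s_{S'}$ is a unit.

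To close the argument, if $\phi(S) = \phi(S')$ then $(s_S - s_{S'}) \cdot m = 0$, and multiplying on the left by the inverse of $s_S - s_{S'}$ forces $m = 0$, contradicting the choice of $m$. Hence $\phi$ is injective and $|M| \geqslant 2^{\aleph_0}$. The only point requiring any real care is the unit characterisation in the twisted ring; this is almost as in the untwisted case, but one has to pay attention to the fact that $\alpha$ appears when moving scalars across powers of $t$. Both of these are handled cleanly by restricting to coefficients in $\{-1,0,1\}$, which are $\alpha$-fixed. No genuine obstacle arises: the combinatorial choice of the uncountable family $\{s_S\}_{S \subseteq \N}$ is engineered precisely so that the differences are units of $A$.
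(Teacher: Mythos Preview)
Your proposal is correct and follows essentially the same approach as the paper: both arguments pick a non-zero $m\in M$, consider the uncountable family of $\{0,1\}$-power series in $t$, observe that any two distinct members differ by a unit (via a geometric-series inverse), and conclude that the map sending a series $s$ to $s\cdot m$ (the paper uses the right action $m\cdot s$) is injective. The only cosmetic differences are that the paper indexes by non-zero series rather than by $\mathcal P(\N)$ and writes the difference as $(1-p)t^{-k}$ rather than $t^{n_0}(\pm 1+ta)$; your remark about $\{-1,0,1\}$ being $\alpha$-fixed is exactly the point that makes the twisted case no harder than the untwisted one.
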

\begin{proof}
    Let $S \subset R_\alpha[t\inv,\rrbracket$ be the set of non-zero expressions of the form $\sum_{i=0}^\infty \varepsilon_i t^i$, where $\varepsilon_i \in \{0,1\}$ for each $i \geqslant 0$. Observe that if $s_0 \neq s_1$ are distinct elements of $S$, then $s_1 - s_0$ is a unit of $R_\alpha[t\inv,t\rrbracket$. This is because $s_1 - s_0 = (1 - p)t^{-k}$ for some $k \in \Z$ and some element $p$ of the form $p = \sum_{i = 1}^\infty \delta_i t^i$. Hence $(s_1 - s_0)\inv = t^k(1 + p + p^2 + \cdots)$.

    Let $M$ be a non-zero module over $R_\alpha[t\inv,t\rrbracket$ and let $m \in M \smallsetminus \{0\}$. We claim that the map $S \rightarrow M$ sending $s$ to $m\cdot s$ is injective. Suppose $m \cdot s_0 = m \cdot s_1$ for some $s_0,s_1 \in S$. If $s_0 \neq s_1$, then $m = m(s_1 - s_0)(s_1 - s_0)\inv = 0$, which is a contradiction. Hence, the map is injective, and since $S$ is uncountable, $M$ is too. \qedhere
\end{proof}

We are now ready to give a proof of Ranicki's Criterion based on the long exact sequences of \cref{cor:LES}. The result differs from Ranicki's in two ways: first, it is in terms of vanishing Novikov cohomology (instead of homology) and second, it does not require that the chain complex be bounded above or below. Note also that Ranicki's original proof was for chain complexes of free modules and with trivial twisting $\alpha$.

\begin{thm}[Ranicki's Criterion, cohomology version]\label{thm:ranicki}
    Let $R$ be a countable ring and let $C_\bullet$ be a chain complex of finitely generated projective $R_\alpha[t\inv,t]$-modules. For any integer $n \geqslant 0$, the following are equivalent:
    \begin{enumerate}
        \item\label{item:finite_domination} $\iota^! C_\bullet$ is finitely dominated;
        \item\label{item:conov_zero} $\H^i(C_\bullet; R_\alpha\llbracket t\inv, t]) = \H^i(C_\bullet; R_\alpha[t\inv, t\rrbracket) = 0$ for all $i$.
    \end{enumerate}
\end{thm}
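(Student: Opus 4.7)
The plan is to combine the long exact sequence of \cref{cor:LES} with \cref{lem:countable} for the forward direction and with Brown's criterion (\cref{thm:Brown_crit_finiteness}) for the converse.

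For $(1) \Rightarrow (2)$, the strategy is countability. Suppose $\iota^! C_\bullet$ is chain homotopy equivalent over $R$ to a complex $D_\bullet$ of finitely generated projective $R$-modules. Since $R$ is countable, each $\Hom_R(D_i, R)$ is a finitely generated $R$-module and hence countable as a set, so $\H^i(\iota^! C_\bullet; R) = \H^i(D_\bullet; R)$ is countable. The analogous argument over the countable ring $R_\alpha[t\inv,t]$ shows $\H^i(C_\bullet; R_\alpha[t\inv,t])$ is countable as well. The long exact sequence of \cref{cor:LES} with $M = R$ then exhibits $\H^i(C_\bullet; R_\alpha\llbracket t\inv,t]) \oplus \H^i(C_\bullet; R_\alpha[t\inv,t\rrbracket)$ as an extension of a subgroup of one countable group by a quotient of another, so each summand is itself countable. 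But each summand carries a natural right-module structure over the corresponding Novikov ring, and \cref{lem:countable} (together with its mirror image, obtained by swapping $t$ with $t\inv$) forces every non-zero such module to be uncountable; both summands therefore vanish.

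For $(2) \Rightarrow (1)$, the plan is to show that $\H^i(\iota^! C_\bullet;-)$ commutes with direct limits for every $i$ and then invoke \cref{thm:Brown_crit_finiteness}. Base-changing $C_\bullet$ along the inclusion $R_\alpha[t\inv,t] \hookrightarrow R_\alpha[t\inv,t\rrbracket$ yields a complex of finitely generated projective $R_\alpha[t\inv,t\rrbracket$-modules whose cohomology functor commutes with direct limits, so part (2) of \cref{lem:acyclic_ring_module} promotes the hypothesis $\H^i(C_\bullet; R_\alpha[t\inv,t\rrbracket) = 0$ to $\H^i(C_\bullet; M_\alpha[t\inv,t\rrbracket) = 0$ for every $R$-module $M$; the symmetric argument handles the other Novikov ring. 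With these vanishings in place, the long exact sequence of \cref{cor:LES} collapses to a natural isomorphism $\H^i(\iota^! C_\bullet; M) \cong \H^{i+1}(C_\bullet; M_\alpha[t\inv,t])$, and since $M_\alpha[t\inv,t] \cong M \otimes_R R_\alpha[t\inv,t]$ and $C_\bullet$ has finitely generated projective terms, the right-hand side commutes with direct limits in $M$. Brown's criterion then yields finite domination of $\iota^! C_\bullet$. The principal obstacle is the upgrade step just used: part (2) of \cref{lem:acyclic_ring_module} requires $C_\bullet$ to be bounded above, a hypothesis absent from the theorem, so the actual argument likely either reduces to the bounded case by truncation or refines the reverse induction of \cref{lem:acyclic_ring_module} by exploiting that $\H^i(C_\bullet;-)$ commutes with arbitrary direct sums.
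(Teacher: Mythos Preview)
Your proposal is essentially the paper's own proof: the forward direction uses the long exact sequence of \cref{cor:LES} with $M=R$, countability of the outer terms, and \cref{lem:countable}; the converse upgrades the vanishing via \cref{lem:acyclic_ring_module}\ref{item:cohomol_range}, reads off the natural isomorphism $\H^i(\iota^! C_\bullet;M)\cong \H^{i+1}(C_\bullet;M_\alpha[t^{-1},t])$ from the long exact sequence, and applies \cref{thm:Brown_crit_finiteness}.

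The concern you flag about \cref{lem:acyclic_ring_module}\ref{item:cohomol_range} requiring $C_i=0$ for $i>n$ is legitimate, and the paper's proof simply invokes the lemma without comment (note that the phrase ``for any integer $n\geqslant 0$'' in the statement is vestigial---$n$ appears nowhere in the two conditions---suggesting an earlier version of the theorem had a boundedness hypothesis). Your proposed workaround via commutation with direct sums does not suffice on its own: the complex $\cdots\to S\xrightarrow{x}S\xrightarrow{x}S\to\cdots$ over $S=k[x]/(x^2)$ has finitely generated free terms and vanishing $\H^i(-;S)$ in all degrees, yet $\H^i(-;k)\neq 0$. So without a bound the upgrade step genuinely fails in general; both your argument and the paper's are correct as written only under an (implicit) boundedness assumption on $C_\bullet$, which is harmless for all applications in the paper.
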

\begin{proof}
    Assume that \ref{item:finite_domination} holds. The long exact sequence in cohomology of \cref{cor:LES} yields exact sequences 
    \[
        \H^i(C_\bullet; R_\alpha[t\inv,t]) \longrightarrow \H^i(C_\bullet; R_\alpha\llbracket t\inv,t] \oplus R_\alpha[t\inv,t\rrbracket) \longrightarrow \H^i(\iota^! C_\bullet; R)
    \]
    for all $i \geqslant 0$. Since $C_\bullet$ is a chain complex of finitely generated $R_\alpha[t\inv,t]$-modules and $R_\alpha[t\inv,t]$ is countable, it follows that $\H^i(C_\bullet; R_\alpha[t\inv,t])$ is countable. Moreover, since $\iota^! C_\bullet$ is chain homotopy equivalent to a chain complex that is finitely generated in degrees at most $n$, we also have that $\H^i(\iota^! C_\bullet; R)$ is countable. Since the sequence above is exact, we conclude that $\H^i(C_\bullet; R_\alpha\llbracket t\inv,t] \oplus R_\alpha[t\inv,t\rrbracket)$ is countable, and therefore vanishes by \cref{lem:countable}.

    \smallskip

    Now we assume that \ref{item:conov_zero} holds. Let $M$ be an arbitrary right $R$-module. The long exact sequence in cohomology of \cref{cor:LES} yields isomorphisms
    \[
        \H^i(\iota^! C_\bullet; M) \ \cong \ \H^{i+1}(C_\bullet; M_\alpha[t\inv,t])
    \]
    for all $i \geqslant 0$, by \cref{lem:acyclic_ring_module}. Let $\{M_j\}_{j \in J}$ be any directed system of $R$-modules such that $\varinjlim M_j = 0$. Then
    \[
        \varinjlim \left( (M_j)_\alpha[t\inv,t] \right) \ \cong \ \varinjlim \left( (M_j)_\alpha \otimes_R R_\alpha[t\inv,t] \right) \ = \ 0
    \]
    because tensor products commute with direct limits. Since $C_\bullet$ is a complex of finitely generated projective modules, we have
    \begin{align*}
        \varinjlim \H^i(\iota^! C_\bullet; M_j) \ &\cong \ \varinjlim \H^{i+1}(C_\bullet; (M_j)_\alpha[t\inv,t]) \\
        &\cong \ \H^{i+1}(C_\bullet; \varinjlim \left( (M_j)_\alpha[t\inv,t] \right)) \ = \ 0.
    \end{align*}
    By \cref{thm:Brown_crit_finiteness}, $\iota^! C_\bullet$ is finitely dominated. \qedhere
\end{proof}

\begin{rem}
    Let $S$ be an $R_\alpha[t\inv,t]$-algebra. If $C_\bullet$ is a bounded chain complex (meaning $C_i = 0$ for $|i|$ sufficiently large), then $\H^i(C_\bullet;S) = 0$ for all $i$ if and only if $\H_i(C_\bullet;S) = 0$ for all $i$ (this is an easy consequence of \cite[Lemma 2]{Brown_HomCritFinite}). Hence, the proof of \cref{thm:ranicki} gives a short new proof of Ranicki's Criterion.
\end{rem}

We decided to give the proof of \cref{thm:ranicki} in the case that $R$ is countable, since it is more straightforward and this covers most cases of interest (such as when $R = k[N]$, where $k$ is a countable ring and $N = \ker(G \rightarrow \Z)$ for some countable group $G$). As explained below, the case of a general ring $R$ follows from the countable case. Note that only the proof of the implication \ref{item:finite_domination} $\Rightarrow$ \ref{item:conov_zero} used the countability assumption.

\begin{proof}[Proof (of \cref{thm:ranicki}, for a general ring $R$)]
    We first set up some general notation: if $F$ is a free $S$-module for some ring $S$ and with a fixed basis $B$, then let $F|_{S'}$ denote the free $S'$-submodule $\bigoplus_{b \in B} S'b \leqslant F$.

    By assumption, there is a chain complex $D_\bullet$ of finitely generated projective $R$-modules chain homotopy equivalent to $\iota^! C_\bullet$. We may assume that $C_\bullet$ and $D_\bullet$ are complexes of finitely generated free modules, at the potential cost of them being nonzero in infinitely many degrees. Fix bases for all the modules in $C_\bullet$ and $D_\bullet$. Since $R_\alpha[t\inv,t]$ is free as an $R$-module with basis given by the powers of $t$, this choice induces a basis for $\iota^! C_\bullet$ as a free $R$-module. Note that all the bases are countable.

    We consider the following module maps:
    \begin{itemize}
        \item the boundary maps of the chain complexes $\iota^! C_\bullet$ and $D_\bullet$;
        \item a homotopy equivalence $h_\bullet \colon D_\bullet \rightarrow \iota^! C_\bullet$ with its chain homotopy inverse $g_\bullet \colon \iota^! C_\bullet \rightarrow D_\bullet$;
        \item a chain map $s_\bullet \colon D_\bullet \rightarrow D_{\bullet + 1}$ witnessing the fact that $g_\bullet h_\bullet$ is chain homotopic to the identity (i.e.\ a map satisfying $\id_{D_i} - g_i h_i = s_{i-1} \partial_i + s_{i+1} \partial_i$) as well as a chain map $r_\bullet \colon \iota^! C_\bullet \rightarrow \iota^! C_{\bullet+1}$ witnessing the fact that $h_\bullet g_\bullet$ is chain homotopic to the identity.
    \end{itemize}
    
    Let $E$ be the union of the $R$-bases of all the modules $\iota^! C_i$ and $D_i$. Let $R' \leqslant R$ be a countable subring such that $f(e) \in M|_{R'}$, where $e \in E$ is arbitrary and $f$ is any one of the maps above whose domain contains $e$ and whose codomain is $M$. Then $\iota^! C_\bullet|_{R'}$ and $D_\bullet|_{R'}$ are well defined chain complexes of free $R'$-modules, and the restrictions of the maps above witness the fact that they are chain homotopic.

    We will now prove that $\H^i(C_\bullet; R_\alpha[t\inv,t\rrbracket) = 0$ for all $i \leqslant n$. Let 
    \[
        z \colon C_i \longrightarrow R_\alpha[t\inv,t\rrbracket
    \]
    be a cocycle for some $i \leqslant n$, and suppose that $\{e_1, \dots, e_m\}$ is the fixed finite generating set of $C_i$ as an $R_\alpha[t\inv, t]$-module. By potentially enlarging the subring $R'$ from the previous paragraph, we may assume it is countable and contains all the coefficients of each element $z(e_j)$. We may moreover assume that $R'$ is $\alpha$-invariant (and countable) by replacing it with the subring generated by $\bigcup_{i\in\Z} \alpha^i(R')$. Then $z$ restricts to a cochain
    \[
        z' \colon C_i|_{R'} \longrightarrow R'_\alpha[t\inv,t].
    \]
    By \cref{thm:ranicki}, we have $\H^i(C_\bullet|_{R'}; R_\alpha'[t\inv,t\rrbracket) = 0$, so there exists a cochain $c' \colon C_{i-1}|_{R'} \rightarrow R_\alpha'[t\inv,t\rrbracket$ such that $z' = c' \partial$. But $c'$ defines an $R$-linear map 
    \[
        c \colon C_{i-1} \longrightarrow R_\alpha[t\inv,t\rrbracket
    \]
    by setting $c(e) = c'(e)$ on all basis elements $e$. We then also have $z = c \partial$, proving that $\H^i(C_\bullet; R_\alpha[t\inv,t\rrbracket) = 0$, as desired. \qedhere
\end{proof}

We will need the next two propositions later on. The first is a refinement of Ranicki's Criterion, which is often called Sikorav's Theorem, and has already been obtained in this form by Hillman and Kochloukova \cite[Theorem 5]{HillmanKochloukova_PDnCovers}. We will give a new proof the direction \ref{item:nov_zero_range} $\Rightarrow$ \ref{item:homotopy_finite_range}. Together with Hillman and Kochloukova's proof of the implication \ref{item:homotopy_finite_range} $\Rightarrow$ \ref{item:nov_zero_range}, this gives a proof of Sikorav's Theorem that does not involve analysing individual Novikov cycles in the complex of Novikov chains, and instead uses standard methods from homological algebra (such as mapping cones of chain complexes and long exact sequences in homology).

\begin{prop}[Sikorav's Theorem]\label{prop:Sikorav}
    Let $C_\bullet$ be a chain complex of projective $R_\alpha[t\inv,t]$-modules such that $C_i$ is finitely generated for all $i \leqslant n$ and $C_i = 0$ for all $i < 0$. The following are equivalent:
    \begin{enumerate}
        \item\label{item:homotopy_finite_range} $\iota^! C_\bullet$ is of finite $n$-type;
        \item\label{item:nov_zero_range} $\H_i(C_\bullet; R_\alpha\llbracket t\inv, t]) = \H_i(C_\bullet; R_\alpha[t\inv, t\rrbracket) = 0$ for all $i \leqslant n$.
    \end{enumerate}
\end{prop}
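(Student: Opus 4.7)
The plan is to prove (2) $\Rightarrow$ (1); the reverse implication is due to Hillman and Kochloukova \cite{HillmanKochloukova_PDnCovers}. My strategy mirrors the proof of \cref{thm:ranicki} but swaps cohomology for homology and direct limits for products throughout.

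First, using \cref{lem:acyclic_ring_module}\ref{item:homol_range} on $C_\bullet$ viewed as a chain complex of projective $R_\alpha[t\inv, t\rrbracket$-modules (and similarly over $R_\alpha\llbracket t\inv, t]$), the vanishing assumed in (2) extends to vanishing of $\H_i(C_\bullet; N)$ for \emph{every} module $N$ over either Novikov ring, as long as $i \leqslant n$. In particular, for every right $R$-module $M$, the middle term of the long exact sequence of \cref{cor:LES} vanishes in this range, producing natural isomorphisms
\[
    \H_{i+1}\bigl(C_\bullet;\, M_\alpha\llbracket t\inv, t\rrbracket\bigr) \;\xrightarrow{\cong}\; \H_i(\iota^! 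C_\bullet; M) \qquad \text{for } i \leqslant n - 1,
\]
and a natural surjection at $i = n$ whose kernel is the image of $\H_{n+1}(C_\bullet; M_\alpha\llbracket t\inv, t] \oplus M_\alpha[t\inv, t\rrbracket)$. Second, since $C_\bullet$ is itself of finite $n$-type over $R_\alpha[t\inv, t]$ by hypothesis, \cref{thm:Brown_crit_finiteness}\textit{(4)} applied to $C_\bullet$ guarantees that $\H_i(C_\bullet; -)$ commutes with arbitrary products for $i < n$ and is epic on products at $i = n$. Because the functor $M \mapsto M_\alpha\llbracket t\inv, t\rrbracket$ preserves products (a product of bi-infinite Laurent series is a bi-infinite Laurent series of a product), transporting this product-preservation across the displayed isomorphism yields the product condition for $\H_i(\iota^! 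C_\bullet; -)$ in the relevant range; a final appeal to \cref{thm:Brown_crit_finiteness}\textit{(4)} in the reverse direction then certifies that $\iota^! C_\bullet$ is of finite $n$-type.

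I expect the hardest step to be the boundary case $i = n - 1$: Brown's criterion for $\iota^! C_\bullet$ there demands an isomorphism, whereas Brown's criterion for $C_\bullet$ at degree $n$ supplies only an epimorphism. My plan is to bridge this gap by a five-lemma argument comparing the long exact sequence for $M = \prod_j M_j$ with the product of the long exact sequences for each $M_j$, leveraging the vanishing of the middle Novikov terms throughout the relevant range; if that comparison resists, the reduction to countable subrings used in the second proof of \cref{thm:ranicki} should circumvent the subtlety by invoking \cref{lem:countable}.
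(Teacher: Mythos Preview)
Your approach is essentially the paper's, and you have correctly isolated the delicate step. But both of your proposed fixes fail, and there is a second gap you have not noticed.

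The five-lemma plan cannot work. In the long exact sequence of \cref{cor:LES}, the terms flanking $\H_{n-1}(\iota^! C_\bullet; M)$ are $\H_n$ and $\H_{n-1}$ of $C_\bullet$ with coefficients $M_\alpha\llbracket t\inv,t] \oplus M_\alpha[t\inv,t\rrbracket$, and these vanish for \emph{every} $M$ by \cref{lem:acyclic_ring_module}. Hence in both rows of your comparison diagram the neighbouring terms are zero, and the five-lemma collapses to the tautology that $\H_n(C_\bullet; \prod N_j) \to \prod \H_n(C_\bullet; N_j)$ is an isomorphism if and only if it is one; Brown only tells you it is an epimorphism. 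The countable-subring reduction from the second proof of \cref{thm:ranicki} does not help either: there it was used to transport a \emph{vanishing} statement for a specific cocycle, whereas here you are trying to establish a finiteness property, and \cref{lem:countable} has no purchase on modules like $M_\alpha\llbracket t\inv,t\rrbracket$ that are not modules over a Novikov ring.

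There is also a gap at $i = n$. The required epimorphism $\H_n(\iota^! C_\bullet; \prod_J R) \to \prod_J \H_n(\iota^! C_\bullet; R)$ would, via your surjection, follow from product-compatibility of $\H_{n+1}(C_\bullet; -)$; but Brown gives nothing in degree $n+1$ since $C_{n+1}$ is not assumed finitely generated.

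The paper closes both gaps with one move. Before running your argument, replace $C_{n+1}$ by a finitely generated direct summand $C'_{n+1}$ chosen so that the truncated complex
\[
    C'_\bullet \;=\; \{0 \to C'_{n+1} \to C_n \to \cdots \to C_0 \to 0\}
\]
still has $\H_n(C'_\bullet; R_\alpha\llbracket t\inv,t]) = \H_n(C'_\bullet; R_\alpha[t\inv,t\rrbracket) = 0$. Such a $C'_{n+1}$ exists because over each Novikov ring the $n$-cycles form a direct summand of the finitely generated projective module $R_\alpha[t\inv,t\rrbracket \otimes C_n$, hence are themselves finitely generated, and so finitely many elements of $C_{n+1}$ suffice to surject onto them. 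Now $C'_\bullet$ is finitely generated in \emph{every} degree, so $\H_j(C'_\bullet; -)$ commutes with products for all $j$; your argument then goes through cleanly for $i < n$ (where $\iota^! C'_\bullet$ and $\iota^! C_\bullet$ have the same homology). The case $i = n$ is handled by a short separate diagram chase comparing the two complexes.
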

\begin{proof}
    Assume that \ref{item:nov_zero_range} holds. There is a finitely generated direct summand $C_{n+1}'$ of $C_{n+1}$ such that the modified chain complex
    \[
        C_\bullet' = \{0 \longrightarrow C_{n+1}' \longrightarrow C_n \longrightarrow \cdots \longrightarrow C_0 \longrightarrow 0\}
    \]
    still has $\H_n(C_\bullet'; R_\alpha\llbracket t\inv, t]) = \H_n(C_\bullet'; R_\alpha[t\inv, t\rrbracket) = 0$. This is because $R_\alpha\llbracket t\inv, t]$ is a ring, and therefore the module $Z_i(R_\alpha\llbracket t\inv, t] \otimes_{R_\alpha[t\inv,t]} C_\bullet)$ of cycles degrees $i \leqslant n$ is a direct summand of $R_\alpha\llbracket t\inv, t] \otimes_{R_\alpha[t\inv,v]} C_i$ for all $i \leqslant n$. A similar remark applies to $R_\alpha[t\inv, t\rrbracket$.
    
    Now, 
    \[
        \H_i(C_\bullet'; M_\alpha\llbracket t\inv, t]) \ = \ \H_i(C_\bullet'; M_\alpha[t\inv, t\rrbracket) = 0
    \]
    for any right $R$-module $M$ by \cref{lem:acyclic_ring_module} and all $i \leqslant n$. The long exact sequence in homology of \cref{cor:LES} yields isomorphisms
    \[
        \H_{i+1}(C_\bullet'; M_\alpha\llbracket t\inv, t\rrbracket) \ \cong \ \H_i(\iota^! C_\bullet; M)
    \]
    for all $i < n$. Note that $\prod R_\alpha \llbracket t\inv, t \rrbracket \cong \left(\prod R \right)_\alpha \llbracket t\inv, t \rrbracket$, where the product is taken over an arbitrary index set. Since $C_{i+1}$ is finitely generated for $i < n$, the homology of $C_\bullet$ commutes with products, and therefore
    \begin{align*}
        \H_i(\iota^! C_\bullet; \prod R) \ = \ \H_i(\iota^! C_\bullet'; \prod R) \ &\cong \ \H_{i+1}(C_\bullet'; (\prod R)_\alpha\llbracket t\inv, t\rrbracket) \\
        &\cong \ \H_{i+1}(C_\bullet'; \prod R_\alpha \llbracket t\inv, t \rrbracket) \\
        &\cong \ \prod \H_{i+1}(C_\bullet'; R_\alpha\llbracket t\inv,t \rrbracket) \\
        &\cong \ \prod \H_i(\iota^! C_\bullet'; R) \ = \ \prod \H_i(\iota^! C_\bullet; R)
    \end{align*}
    for $i < n$.
    
    We have a commuting diagram
    \[
        \begin{tikzcd}
            \H_{n+1}(C_\bullet'; \prod R_\alpha \llbracket t\inv, t \rrbracket) \arrow[r]\arrow[d, "\cong"] & \H_n(\iota^! C_\bullet'; \prod R) \arrow[r]\arrow[d] & 0 \\
            \prod \H_{n+1}(C_\bullet'; R_\alpha \llbracket t\inv, t \rrbracket)\arrow[r] & \prod \H_n(\iota^! C_\bullet';R) \arrow[r] & 0
        \end{tikzcd}
    \]
    with exact rows. Hence, $\H_n(\iota^! C_\bullet'; \prod R) \rightarrow \prod \H_n(\iota^! C_\bullet';R)$ is an epimorphism. But there is another commuting diagram 
    \[
        \begin{tikzcd}
            \H_n(\iota^! C_\bullet'; \prod R) \arrow[r]\arrow[d] & \H_n(\iota^! C_\bullet; \prod R) \arrow[r]\arrow[d] & 0 \\
            \prod \H_n(\iota^! C_\bullet';R) \arrow[r] & \prod \H_n(\iota^! C_\bullet; \prod R) \arrow[r] & 0
        \end{tikzcd}
    \]
    with exact rows. Hence $\prod \H_n(\iota^! C_\bullet';R) \rightarrow \H_n(\iota^! C_\bullet; \prod R)$ is an epimorphism, and hence \ref{item:homotopy_finite_range} follows from \cref{thm:Brown_crit_finiteness}. \qedhere
\end{proof}

The following result is the dual of the previous theorem and is proved similarly. It will be used in \cref{thm:co_abelian_cd}.

\begin{thm}\label{thm:cd_limits_commute}
    Let $C_\bullet$ be a chain complex of projective $R_\alpha[t\inv,t]$-modules with $C_i = 0$ for all $i > n$. Suppose that the functors $\H^i(C_\bullet;-)$ commute with direct limits of $R_\alpha[t\inv,t]$-modules for all $i \geqslant m$. If 
    \[
        \H^i(C_\bullet; R_\alpha\llbracket t\inv, t]) \ = \ \H^i(C_\bullet; R_\alpha[t\inv, t\rrbracket) \ = \ 0
    \]
    for all $i \geqslant m$, then $\H^i(\iota^! C_\bullet; -)$ commutes with direct limits for all $i \geqslant m$. If $n \geqslant m$, then $\iota^! C_\bullet$ is of cohomological dimension at most $n-1$.
\end{thm}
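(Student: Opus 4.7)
The plan is to mimic the structure of the proofs of \cref{thm:ranicki,prop:Sikorav}, exploiting the long exact sequence of \cref{cor:LES} in the cohomological direction. The key preliminary task is to promote the vanishing hypothesis at the coefficient ring $R_\alpha[t\inv,t]$ to vanishing of $\H^i(C_\bullet;M_\alpha\llbracket t\inv,t])$ and $\H^i(C_\bullet;M_\alpha[t\inv,t\rrbracket)$ for \emph{arbitrary} right $R$-modules $M$ and all $i\geqslant m$. To achieve this, I would base-change: set $D_\bullet := R_\alpha\llbracket t\inv,t]\otimes_{R_\alpha[t\inv,t]} C_\bullet$, a complex of projective $R_\alpha\llbracket t\inv,t]$-modules concentrated in degrees $\leqslant n$. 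By Hom--tensor adjunction, $\H^i(D_\bullet;N) \cong \H^i(C_\bullet;N)$ naturally in any $R_\alpha\llbracket t\inv,t]$-module $N$, and direct limits of $R_\alpha\llbracket t\inv,t]$-modules are computed on the underlying $R_\alpha[t\inv,t]$-modules, so both hypotheses on $C_\bullet$ transfer to $D_\bullet$ over $R_\alpha\llbracket t\inv,t]$. Then \cref{lem:acyclic_ring_module}\ref{item:cohomol_range} applied to $D_\bullet$ gives $\H^i(D_\bullet;L)=0$ for every $R_\alpha\llbracket t\inv,t]$-module $L$ and every $i\geqslant m$; specialising to $L = M_\alpha\llbracket t\inv,t]$ yields $\H^i(C_\bullet;M_\alpha\llbracket t\inv,t])=0$. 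The analogous base change to $R_\alpha[t\inv,t\rrbracket$ handles the other Novikov-type coefficient.

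With these vanishings in hand, the long exact sequence of \cref{cor:LES} degenerates into isomorphisms
\[
    \H^i(\iota^! C_\bullet; M) \cong \H^{i+1}(C_\bullet; M_\alpha[t\inv,t])
\]
for all $R$-modules $M$ and all $i\geqslant m$, natural in $M$ by naturality of the short exact sequence in \cref{obs:SES}. Since the functor $M \mapsto M_\alpha[t\inv,t] \cong M\otimes_R R_\alpha[t\inv,t]$ commutes with direct limits, and $\H^{i+1}(C_\bullet;-)$ commutes with direct limits by hypothesis, the composite $\H^i(\iota^! C_\bullet;-)$ does as well, proving the first claim. For the cohomological dimension bound, if $n\geqslant m$ and $i\geqslant n$, then $C_{i+1}=0$ forces $\H^{i+1}(C_\bullet;-)=0$, and the isomorphism yields $\H^i(\iota^! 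C_\bullet;M)=0$ for every $R$-module $M$, so $\cd(\iota^! C_\bullet)\leqslant n-1$.

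The main obstacle is the base-change step that promotes vanishing at the coefficient ring to vanishing for arbitrary Novikov-type coefficients: one has to view $C_\bullet$ through the lens of a different base ring and verify that both the direct-limit-compatibility and the vanishing hypothesis transfer across the tensor product, rather than working directly with the original complex. Everything after that is a bookkeeping exercise in the long exact sequence, dual to the arguments already given for \cref{thm:ranicki,prop:Sikorav}.
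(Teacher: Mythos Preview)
Your proposal is correct and follows essentially the same route as the paper. The paper's proof invokes \cref{lem:acyclic_ring_module}\ref{item:cohomol_range} directly to obtain $\H^i(C_\bullet;M_\alpha\llbracket t\inv,t])=\H^i(C_\bullet;M_\alpha[t\inv,t\rrbracket)=0$ for all $R$-modules $M$, then reads off both conclusions from the long exact sequence of \cref{cor:LES}; your explicit base change $D_\bullet = R_\alpha\llbracket t\inv,t]\otimes_{R_\alpha[t\inv,t]} C_\bullet$ is just the clean way to make that invocation precise (since \cref{lem:acyclic_ring_module} as stated requires the vanishing hypothesis at the base ring of the complex, not at an algebra over it).
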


\begin{rem}
    It is unclear to us whether the converse holds, i.e.\ whether $\H^i(\iota^! C_\bullet; -)$ commuting with direct limits for $i \geqslant m$ implies vanishing Novikov cohomology in degrees at least $m$. It holds if $m = n$ (see \cref{cor:cd_chain_complex}), and if we additionally assume that $C_\bullet$ is a chain complex of finitely generated projectives, then the converse also holds at $m = n-1$.
\end{rem}

\begin{proof}[Proof (of \cref{thm:cd_limits_commute})]
    The proof that $\H^i(\iota^! C_\bullet; -)$ commutes with direct limits is similar to the proof of \cref{prop:Sikorav}, where it was shown that $\H_i(\iota^! C_\bullet;-)$ commutes with direct products in a certain range. We leave the details to the reader.

    By \cref{lem:acyclic_ring_module},
    \[
        \H^i(C_\bullet; M_\alpha\llbracket t\inv, t]) \ = \ \H^i(C_\bullet; M_\alpha[t\inv, t\rrbracket) \ = \ 0
    \]
    for all $i \geqslant m$ and any right $R$-module $M$. The long exact sequence in cohomology of \cref{cor:LES} then yields $\H^n(\iota^! C_\bullet; M) = 0$. Since $M$ was arbitrary, this shows that $\iota^! C_\bullet$ has cohomological dimension at most $n-1$. \qedhere
\end{proof}

As a corollary, we find that the top-dimensional Novikov cohomology of $C_\bullet$ completely controls the cohomological dimension of $\iota^! C_\bullet$, proving \cref{introthm:cohom_drop_complex} from the introduction in a more general form.

\begin{cor}\label{cor:cd_chain_complex}
    Let $C_\bullet$ be a chain complex of projective $R_\alpha[t\inv,t]$-modules of cohomological dimension $n$ and such that $C_n$ is finitely generated. The following are equivalent:
    \begin{enumerate}
        \item\label{item:cohom_dim} $\iota^! C$ is of cohomological dimension $n-1$;
        \item\label{item:top_Nov_zero} $\H^n(C_\bullet; R_\alpha\llbracket t\inv,t]) = \H^n(C_\bullet; R_\alpha[t\inv,t\rrbracket) = 0$.
    \end{enumerate}
\end{cor}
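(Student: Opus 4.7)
The plan is to prove both implications using the long exact sequence of \cref{cor:LES}, invoking the Eilenberg--Watts theorem for one direction and \cref{lem:countable} for the other. As a preliminary reduction I replace $C_\bullet$ by a chain-homotopy equivalent complex with $C_i = 0$ for $i > n$ and $C_n$ still finitely generated projective: the splitting argument in the proof of the lemma characterising cohomological dimension in \cref{sec:prelims} shows that $\im\partial_{n+1}$ is a direct summand of $C_n$, and its complement, being a summand of the finitely generated projective $C_n$, is itself finitely generated projective.

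For the implication $(2) \Rightarrow (1)$, I first observe that $\H^n(C_\bullet;-)$, viewed as a functor on right $R_\alpha[t^{-1},t]$-modules, is right-exact (since $\H^{n+1}(C_\bullet;-) = 0$) and preserves arbitrary direct sums. The direct-sum preservation is the key step: finite generation of $C_n$ forces the image of the induced map $\partial^*_n \colon \Hom(C_{n-1},\bigoplus_J N_j) \to \Hom(C_n,\bigoplus_J N_j)$ to land inside $\bigoplus_{j \in J'} \Hom(C_n, N_j)$ for a finite $J' \subseteq J$ determined by a fixed generating set of $C_n$, from which one identifies $\im(\partial^*_n)$ with $\bigoplus_J \im(\partial^*_n|_{N_j})$ and obtains $\H^n(C_\bullet; \bigoplus_J N_j) \cong \bigoplus_J \H^n(C_\bullet; N_j)$. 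The Eilenberg--Watts theorem (\cref{thm:EW_thm}) then supplies a natural isomorphism $\H^n(C_\bullet; -) \cong - \otimes_{R_\alpha[t^{-1},t]} F$ where $F := \H^n(C_\bullet; R_\alpha[t^{-1},t])$, so hypothesis (2) becomes $R_\alpha\llbracket t^{-1},t] \otimes F = 0 = R_\alpha[t^{-1},t\rrbracket \otimes F$. For any right $R$-module $M$, the module $M_\alpha\llbracket t^{-1},t]$ carries a right action by the larger ring $R_\alpha\llbracket t^{-1},t]$, so associativity of tensor products yields
\[
    M_\alpha\llbracket t^{-1},t] \otimes_{R_\alpha[t^{-1},t]} F \;\cong\; M_\alpha\llbracket t^{-1},t] \otimes_{R_\alpha\llbracket t^{-1},t]} \bigl( R_\alpha\llbracket t^{-1},t] \otimes_{R_\alpha[t^{-1},t]} F \bigr) = 0,
\]
and similarly $M_\alpha[t^{-1},t\rrbracket \otimes F = 0$. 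Feeding these into the long exact sequence of \cref{cor:LES} at degree $n$ (where $\H^{n+1}(C_\bullet;-)$ vanishes) gives $\H^n(\iota^! C_\bullet; M) = 0$ for every $M$, hence $\cd(\iota^! C_\bullet) \leqslant n-1$.

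For $(1) \Rightarrow (2)$ I invoke \cref{lem:countable}. The hypothesis yields $\H^n(\iota^! C_\bullet; R) = 0$, and combining Shapiro's lemma with the long exact sequence of \cref{cor:LES} at degree $n$ produces a surjection
\[
    \H^n(C_\bullet; R_\alpha[t^{-1},t]) \twoheadrightarrow \H^n(C_\bullet; R_\alpha\llbracket t^{-1},t]) \oplus \H^n(C_\bullet; R_\alpha[t^{-1},t\rrbracket).
\]
Since $C_n$ is finitely generated, the source is a finitely generated $R_\alpha[t^{-1},t]$-module, hence countable as an abelian group whenever $R$ is countable. Each summand on the right inherits a module structure over the corresponding Novikov ring via the natural bimodule structure on those rings, and \cref{lem:countable} (together with its evident symmetric counterpart for $R_\alpha\llbracket t^{-1},t]$-modules) forces both summands to vanish. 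The reduction from general $R$ to the countable case proceeds exactly as at the end of the proof of \cref{thm:ranicki}.

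The step I expect to require the most care is the direct-sum preservation of $\H^n(C_\bullet; -)$: since $C_{n-1}$ need not be finitely generated, one cannot argue termwise, and the finite generation of $C_n$ must be used to constrain the image of $\partial^*_n$ inside the appropriate direct sum. Once this is in place, the Eilenberg--Watts-plus-tensor-associativity trick makes $(2) \Rightarrow (1)$ essentially formal, and the countability reduction in $(1) \Rightarrow (2)$ is a technical nuisance rather than a genuine obstacle.
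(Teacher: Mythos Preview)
Your proposal is correct and essentially matches the paper: the implication $(1)\Rightarrow(2)$ is the paper's proof verbatim (long exact sequence plus \cref{lem:countable}, with reduction to countable $R$), and your direct Eilenberg--Watts argument for $(2)\Rightarrow(1)$ simply unpacks what the paper obtains by invoking \cref{thm:cd_limits_commute} via \cref{lem:acyclic_ring_module}\ref{item:cohomol_range}. One wording issue to fix: the image of $\partial^*_n$ does not land in a single finite sub-sum $\bigoplus_{J'}$ determined by the generators of $C_n$; the correct statement is that $\Hom(C_n,\bigoplus_J N_j)=\bigoplus_J\Hom(C_n,N_j)$ by finite generation of $C_n$, and then componentwise projection shows $\im\partial^*_n=\bigoplus_J\im(\partial^*_n|_{N_j})$, which is what you actually use.
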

\begin{proof}
    If $\iota^! C_\bullet$ is of cohomological dimension $n-1$, then the long exact sequence in cohomology of \cref{cor:LES} yields the exact sequence
    \[
        \H^n(C_\bullet; R_\alpha[t\inv,t]) \longrightarrow \H^n(C_\bullet; R_\alpha\llbracket t\inv,t] \oplus R_\alpha[t\inv,t\rrbracket) \longrightarrow 0.
    \]
    If $R$ is a countable ring, then $\H^n(C_\bullet; R_\alpha[t\inv,t])$ is countable (since $C_n$) is finitely generated. If not, we can use the fact that $C_n$ is finitely generated to reduce to the countable case, much in the same way as in the proof of \cref{thm:ranicki} in the general case. We sketch the argument below.

    We may assume that the modules $C_i$ are all free. The fact that $\iota^! C_\bullet$ is of cohomological dimension $n-1$ means that there is a map $\rho \colon \iota^! C_{n-1} \rightarrow \iota^! C_n$ such that $\rho \partial = \id_{\iota^!C_n}$. Since $C_n$ is finitely generated, there is a finitely generated direct summand $A$ of $C_{n-1}$ containing $\partial(C_n)$. Writing $C_{n-1} = A \oplus B$, we can take $\rho$ to be zero on $B$. Representing $\partial$ and $\rho$ as matrices over $R$, they contain countably many entries, and there is an $\alpha$-invariant countable subring $R'$ of $R$ containing all of these entries. By the argument above, 
    \[
        \H^n(C_\bullet; R_\alpha'\llbracket t\inv,t]) \ = \ \H^n(C_\bullet; R_\alpha'[t\inv,t\rrbracket) \ = \ 0.
    \]
    We conclude that $\H^n(C_\bullet; R_\alpha\llbracket t\inv,t]) = \H^n(C_\bullet; R_\alpha[t\inv,t\rrbracket) = 0$ in the same way as in the proof of the general case of \cref{thm:ranicki}. \qedhere
\end{proof}

\section{The \texorpdfstring{$\Sigma^*$}{Σ*}-invariant}\label{sec:Sigma_inv}

Let $G$ be a group and let $R$ be a ring such that $\cd_R(G) = n$. If the trivial $R[G]$-module $R$ admits a projective resolution $P_\bullet \rightarrow R \rightarrow 0$ such that $P_i$ is finitely generated for all $i > n-m$, then we will say that $G$ is of \emph{type $\FP_m^*(R)$}. In other words, the trivial module admits a resolution that is finitely generated in the top $m$ degrees. Note that the finiteness property $\FP^*_m(R)$ only makes sense for groups with finite cohomological dimension over $R$, and that if a group is of type $\FP(R)$ then it is of type $\FP^*_m(R)$ for all $m$.

\begin{defn}
    Let $G$ be of type $\FP^*_m(R)$. The $m$th \emph{$\Sigma^*$-invariant} of $G$ (over $R$) is
    \[
        \Sigma_m^*(G;R) \ = \ \{ \chi \in \H^1(G,\R) \smallsetminus \{0\} \ : \ \H^i(G;\widehat{R[G]}^\chi) = 0 \ \text{for all} \ i > n-m \}.
    \]
\end{defn}

In particular, $\Sigma_1^*(G;R)$ is the set of characters with respect to which the top-dimensional Novikov homology of $G$ vanishes. It follows immediately from the definition that $\Sigma_{m+1}^*(G;R) \subseteq \Sigma_m^*(G;R)$ (whenever $G$ is of type $\FP^*_{m+1}(R)$). 

\begin{prop}\label{prop:open}
    Let $G$ be a finitely generated group of type $\FP^*_m(R)$ for some ring $R$ and integer $m$. Then $\Sigma_m^*(G;R)$ is open in $\H^1(G;\R)$.
\end{prop}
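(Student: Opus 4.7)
The plan is to witness membership $\chi \in \Sigma_m^*(G;R)$ by a finite amount of chain-level data and then show that this data is preserved under small perturbations of $\chi$, following the classical Sikorav--Bieri--Renz strategy dualized to the cohomological setting.

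First, using that $G$ is finitely generated and of type $\FP_m^*(R)$, I would take a free resolution $P_\bullet \to R \to 0$ of length $n = \cd_R(G)$ over $R[G]$ with $P_i$ finitely generated for $i > n-m$. Setting $Q^i := \Hom_{R[G]}(P_i, R[G])$, finite projectivity gives a natural isomorphism $\Hom_{R[G]}(P_i, M) \cong Q^i \otimes_{R[G]} M$ for $i > n-m$. Thus the top part of the cochain complex $\Hom(P_\bullet, \widehat{R[G]}^\chi)$ is identified with $Q^\bullet \otimes_{R[G]} \widehat{R[G]}^\chi$, a complex of finitely generated free right $\widehat{R[G]}^\chi$-modules whose differentials are represented by fixed matrices over $R[G]$ independent of $\chi$. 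The condition $\chi \in \Sigma_m^*(G;R)$ becomes the exactness of this complex in degrees $> n-m$.

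Next, assuming $\chi \in \Sigma_m^*(G;R)$, I would use projectivity of the top modules together with a descending induction from degree $n$ to produce a partial cochain contraction: $\widehat{R[G]}^\chi$-linear splittings $s^i$ represented by matrices with entries in $\widehat{R[G]}^\chi$ for $i \geqslant n-m+2$, together with a splitting $s^{n-m+1}$ into the possibly non-finitely generated module $\Hom(P_{n-m}, \widehat{R[G]}^\chi)$, satisfying the standard cochain-homotopy identities. The matrix entries are built by Gaussian elimination over $\widehat{R[G]}^\chi$; crucially, the only inversions needed are of elements of the form $1-u$ with $u \in R[G]$ having $\supp(u) \subseteq \{g : \chi(g) > 0\}$, and such $1-u$ are invertible in $\widehat{R[G]}^\chi$ via the geometric series $(1-u)\inv = \sum_{k \geqslant 0} u^k$.

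Finally, only finitely many elements $u_j \in R[G]$, each with finite support, arise in the construction. The condition ``$\chi(g) > 0$ for every $g \in \bigcup_j \supp(u_j)$'' is open in $\H^1(G;\R)$ and defines a neighborhood $U \ni \chi$. For any $\chi' \in U$, each $1-u_j$ remains invertible in $\widehat{R[G]}^{\chi'}$ via the same geometric series, so the same matrices witness exactness of $Q^\bullet \otimes_{R[G]} \widehat{R[G]}^{\chi'}$ in degrees $> n-m$, giving $\chi' \in \Sigma_m^*(G;R)$. The main obstacle is carrying out the inductive construction in a controlled way so that only finitely many positive-$\chi$-support denominators appear, and in particular handling the splitting at the lowest degree $n-m+1$ where the target module is not finitely generated; this is the technical heart of the argument and is analogous to the classical Bieri--Renz analysis of partial chain contractions in the homological setting.
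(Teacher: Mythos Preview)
Your overall strategy coincides with the paper's: dualize the top $m$ terms of a resolution witnessing $\FP_m^*(R)$ to obtain a chain complex of finitely generated projective right $R[G]$-modules, identify $\H^i(G;\widehat{R[G]}^\chi)$ with the homology of this complex tensored with $\widehat{R[G]}^\chi$, and then show that a partial chain contraction over $\widehat{R[G]}^\chi$ persists for nearby $\chi'$.

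There are two points where the paper's execution differs from yours and is cleaner. First, the paper sidesteps the ``main obstacle'' you flag at the lowest degree $n-m+1$: rather than trying to split into the possibly infinitely generated $\Hom(P_{n-m},\widehat{R[G]}^\chi)$, the paper replaces $P_{n-m}$ from the outset by a finitely generated free summand $P_{n-m}'$ containing $\partial(P_{n-m+1})$. This does not change the cohomology in degrees $>n-m$ and makes every module in sight finitely generated, so no special handling of the bottom is needed.

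Second, your assertion that ``Gaussian elimination'' produces a contraction using only finitely many inversions of elements $1-u$ with $\chi$-positive $u\in R[G]$ is exactly the crux, and you have not justified it; a priori the splittings $s^i$ live in $\widehat{R[G]}^\chi$ with no control on their supports. The paper's mechanism is different and more transparent: start with \emph{any} splitting $\sigma_i$ over $\widehat{R[G]}^\chi$, truncate it at sufficient $\chi$-height to obtain a finite matrix $\overline{\sigma}_i$ over $R[G]$, and observe that the defect $M=\id-s_{i-1}\partial_i-\partial_{i+1}\overline{\sigma}_i$ then has all entries with $\chi$-positive support. Hence $\id-M$ is invertible by a geometric series over $\widehat{R[G]}^{\chi'}$ for all $\chi'$ in a neighbourhood, and $s_i:=\overline{\sigma}_i(\id-M)^{-1}$ extends the partial contraction. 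This truncation trick is what actually delivers the ``finitely many positive-support denominators'' you need; your proposal identifies the right shape of the answer but is missing this step.
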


The proposition will follow quickly from \cref{lem:open} below, which will be well known to experts. The proof given here relies on chain contractions, and the following simple fact: if $\chi \colon G \rightarrow \R$ is positive on the support of $x \in R[G]$, then there is an open neighbourhood $U$ of $\chi$ such that $1 - x$ is invertible in $\widehat{R[G]}^\psi$ for all $\psi \in U$. We first give a couple of definitions.

\begin{defn}[Truncation]\label{def:truncation}
    Let $\chi \colon G \rightarrow \R$ be a character. The \emph{truncation} of an element $\sum_{g \in G} r_g g \in \widehat{R[G]}^\chi$ at height $t \in \R$ is the element $\sum_{g \in G} \overline{r}_g g \in R[G]$, where $\overline{r}_g = r_g$ if $\chi(g) < t$, and $\overline{r}_g = 0$ otherwise.
\end{defn}

Denote by $R^G$ the set of all formal series of elements of $G$ with coefficients in $R$ (which can also be thought of as the set of all functions $G \rightarrow R$). For every character $\chi \colon G \to \R$, there is a natural inclusion $\widehat{R[G]}^\chi \subseteq R^G$.

\begin{defn}[Definability over Novikov rings]\label{def:definability}
    Let $\chi, \psi \colon G \rightarrow \R$ be two characters. Say that $x \in \widehat{R[G]}^\chi$ is \emph{definable over $\widehat{R[G]}^\psi$} if $x \in \widehat{R[G]}^\chi \cap \widehat{R[G]}^\psi$ when viewing both Novikov rings as subsets of $R^G$.
\end{defn}

\cref{def:truncation,def:definability} extend immediately to matrices over Novikov rings. Let $A \in \mathrm{Mat}(\widehat{R[G]}^\chi)$. The truncation of $A$ at height $t \in \R$ is obtained by truncating all entries of $A$ at height $t$, and we say that $A$ is definable over $\widehat{R[G]}^\psi$ if all its entries are definable over $\widehat{R[G]}^\psi$.

\begin{lem}\label{lem:open}
    Let $C_\bullet$ be a chain complex of finitely generated projective $R[G]$-modules such that $C_i = 0$ for $i < 0$ and $C_i$. For any integer $m$, the set of non-zero characters $\chi$ such that $\H_i(C_\bullet; \widehat{R[G]}^\chi) = 0$ for all $i \leqslant m$ is open in $\H^1(G;\R) \smallsetminus \{0\}$.
\end{lem}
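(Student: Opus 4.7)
The plan is to use the hypothesis $\H_i(C_\bullet;\nov{R[G]}{\chi})=0$ for $i\leqslant m$ to produce a partial chain contraction over $\nov{R[G]}{\chi}$, approximate it by a finite-support operator via truncation, and then lift the approximation to a genuine chain contraction over $\nov{R[G]}{\psi}$ for every $\psi$ in some open neighbourhood of $\chi$, by inverting a small correction term via a geometric series inside the Novikov ring.

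Concretely, projectivity of the $C_j$'s combined with the vanishing hypothesis yields $\nov{R[G]}{\chi}$-linear maps $s_i\colon C_i\otimes\nov{R[G]}{\chi}\to C_{i+1}\otimes\nov{R[G]}{\chi}$ for $0\leqslant i\leqslant m$ satisfying $\partial_{i+1}s_i+s_{i-1}\partial_i=\id_{C_i}$ (with $s_{-1}=0$). Represent these as matrices with entries in $\nov{R[G]}{\chi}$ (embedding each projective $C_j$ as a summand of a free $R[G]$-module), then truncate each entry at a common height $t$ (cf.\ \cref{def:truncation}) to obtain matrices $\bar{s}_i$ over $R[G]$. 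Set
\[
    E_i \ := \ \id_{C_i} - \partial_{i+1}\bar{s}_i - \bar{s}_{i-1}\partial_i.
\]
Since $\partial$, $\bar{s}$ and $\id$ all have entries in $R[G]$, the matrix $E_i$ has finite support in $G$. On the other hand, substituting the chain contraction identity gives $E_i=\partial_{i+1}(s_i-\bar{s}_i)+(s_{i-1}-\bar{s}_{i-1})\partial_i$, so $\supp E_i\subseteq\{g\in G:\chi(g)\geqslant t-c\}$, where $c$ depends only on the (finite) supports of the boundary matrices $\partial_1,\dots,\partial_{m+1}$. For $t$ large enough, the finite set $G_0:=\bigcup_{i=0}^m\supp E_i$ lies in $\{g:\chi(g)>0\}$, and a direct computation using $\partial^2=0$ yields $\partial_iE_i=E_{i-1}\partial_i$.

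The set $U:=\{\psi\in\H^1(G;\R)\smallsetminus\{0\}:\psi(g)>0\text{ for all }g\in G_0\}$ is then an open neighbourhood of $\chi$, being an intersection of finitely many open half-spaces. For $\psi\in U$, the support of $E_i^k$ is contained in the set of $k$-fold products of elements of $G_0$, whose $\psi$-values are bounded below by $k\cdot\min_{g\in G_0}\psi(g)\to\infty$, so $T_i:=\sum_{k\geqslant 0}E_i^k$ converges in the matrix algebra over $\nov{R[G]}{\psi}$ and is the two-sided inverse of $\id-E_i$. Taking powers of the commutation $\partial_iE_i=E_{i-1}\partial_i$ and summing gives $\partial_iT_i=T_{i-1}\partial_i$. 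For any cycle $x\in C_i\otimes_{R[G]}\nov{R[G]}{\psi}$ with $0\leqslant i\leqslant m$, I then compute
\[
    \partial_{i+1}\bigl(\bar{s}_iT_i(x)\bigr) \ = \ (\id-E_i)T_i(x)-\bar{s}_{i-1}\partial_iT_i(x) \ = \ x-\bar{s}_{i-1}T_{i-1}\partial_i(x) \ = \ x,
\]
so $x$ is a boundary and $\H_i(C_\bullet;\nov{R[G]}{\psi})=0$ for all $i\leqslant m$ and all $\psi\in U$.

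The main obstacle is arranging the competing support conditions on $E_i$: finite support in $G$ (so that $U$ is defined by only finitely many positivity constraints and is hence open in $\H^1(G;\R)$) together with strict $\chi$-positivity of that support (so that the geometric series $\sum E_i^k$ converges in $\nov{R[G]}{\psi}$ for $\psi$ near $\chi$). Both properties are extracted simultaneously by comparing the two expressions for $E_i$ --- one making the finiteness manifest, the other the $\chi$-growth --- after which the remainder of the proof is the geometric series computation together with the continuity of the evaluation maps $\psi\mapsto\psi(g)$.
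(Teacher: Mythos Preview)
Your proof is correct and follows the same core idea as the paper: build a partial chain contraction over $\nov{R[G]}{\chi}$, truncate it to obtain an approximation with finite support, and invert the resulting small error via a geometric series in the Novikov ring. The difference is organisational. The paper proceeds inductively in the degree~$i$: at each step it truncates the new map $\sigma_i$, forms the single error matrix $M=\id-s_{i-1}\partial_i-\partial_{i+1}\bar\sigma_i$, inverts $\id-M$, and sets $s_i=\bar\sigma_i(\id-M)^{-1}$; the open neighbourhood shrinks at each step. You instead truncate all the $s_i$ at once and exploit the commutation $\partial_iE_i=E_{i-1}\partial_i$ to correct every degree simultaneously with a single open set~$U$. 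This is a pleasant streamlining, since it avoids the iterated intersection of neighbourhoods and yields the vanishing directly without having to verify that $\bar s_iT_i$ is itself a chain contraction.

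One small point deserves a sentence of care. When the $C_j$ are merely projective, representing $s_i$ as a matrix via an embedding $C_j\hookrightarrow F_j$ and then truncating entrywise may produce a map $\bar s_i$ whose image does not land in $C_{i+1}$; in that case $E_i=\id_{C_i}-\partial_{i+1}\bar s_i-\bar s_{i-1}\partial_i$ is not obviously an endomorphism of $C_i$. This is easily repaired either by post-composing each $\bar s_i$ with the $R[G]$-linear projection $p_{i+1}\colon F_{i+1}\to C_{i+1}$ (which has finite support and shifts the height bound by a fixed constant, so both features of $E_i$ survive), or, as the paper does, by first replacing $C_\bullet$ with a chain-homotopy-equivalent complex of finitely generated free modules.
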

\begin{proof}
    By taking direct sums, we may assume that the modules $C_i$ are finitely generated and free. Let $\chi$ be a character such that $\H_i(C_\bullet; \widehat{R[G]}^\chi) = 0$ for all $i \leqslant m$. Fixing bases for the modules $C_i$ induces representations of maps between modules $\widehat{R[G]}^{\chi} \otimes_{R[G]} C_i$ as matrices over $\widehat{R[G]}^\chi$.
    
    By induction on $i$, we will show that $\widehat{R[G]}^{\chi} \otimes_{R[G]} C_\bullet$ admits a partial chain contraction of length $i$ whose maps are defined over $\widehat{R[G]}^\psi$ for all $\psi$ in some open neighbourhood $U \subseteq \H^1(G;\R) \smallsetminus \{0\}$ of $\chi$ (recall that a partial chain contraction of length $l$ is a sequence of maps
    \[
        s_i \colon \widehat{R[G]}^\chi \otimes_{R[G]} C_i \longrightarrow \widehat{R[G]}^\chi \otimes_{R[G]} C_{i+1}
    \]
    such that $\id = s_{i-1} \partial_i + \partial_{i+1} s_i$ for all $i \leqslant l$).

    Set $s_{-1} = 0$, which is defined over every Novikov ring, so there is nothing to prove in the base case. Suppose that $m \geqslant i > -1$ and that we have found maps $s_j$ for all $j < i$ defining a length $i-1$ partial chain contraction of $\widehat{R[G]}^{\chi} \otimes_{R[G]} C_\bullet$, which is defined over $\widehat{R[G]}^{\psi}$ for all $\psi$ in some open neighbourhood $U$ of $\chi$. By the Novikov acyclicity assumption, there is a map $\sigma_i \colon \widehat{R[G]}^\chi \otimes_{R[G]} C_i \to \widehat{R[G]}^\chi \otimes_{R[G]} C_{i+1}$ extending the partial chain contraction to one of length $i$. We will now modify $\sigma_i$ so that it is defined over an open set of characters.
    
    Let $\overline{\sigma}_i$ be a truncation of $\sigma_i$ at a height sufficiently large so that every entry of the matrix $M = \partial_{i+1}(\sigma_i - \overline{\sigma}_i) $ has positive support under $\chi$. We then have
    \[
        M \ = \ \id - \sigma_{i-1} \partial_i - \partial_{i+1} \overline{\sigma}_i,
    \]
    so $M$ is defined over $\widehat{R[G]}^\psi$ for $\psi$ in some open neighbourhood $V$ of $\chi$. Moreover, since $M$ has only finitely many non-zero entries, we may shrink $V$ so that the entries of $M$ are positively supported with respect to all $\psi \in V$. It follows that $\id - M$ is invertible with inverse $\sum_{i=0}^\infty M^i$ defined over $\widehat{R[G]}^\psi$ for all $\psi \in V$. Let $s_i = \overline{\sigma}_i (\id - M)\inv$. One easily checks that this extends the partial chain contraction which is defined in the open neighbourhood $U \cap V$ of $\chi$. This completes the induction.
    
    The existence of a length $m$ partial chain contraction defined over $\widehat{R[G]}^\psi$ for all $\psi$ in an open neighbourhood of $\chi$ implies that $\H_i(C_\bullet;\widehat{R[G]}^\psi) = 0$ for all $i \leqslant m$ and all $\psi$ in said neighbourhood. \qedhere
\end{proof}

\begin{rem}
    The proof above would be shorter if definability over the Novikov ring was an open condition on characters. This is not the case. Indeed, it is easy to construct examples of elements in $\widehat{R[G]}^\chi$ (with $G = \Z^2$, for instance) that are not defined over any $\widehat{R[G]}^\psi$ with $\psi \neq \chi$.
\end{rem}

\begin{proof}[Proof (of \cref{prop:open})]
    Let $P_\bullet \rightarrow R \rightarrow 0$ be a length $n$ resolution witnessing the $\FP^*_m(R)$ property for $G$. Let $P_{n-m}'$ be a finitely generated free factor of $P_{n-m}$ containing the image of $P_{n-m+1}$ under the boundary map. The complex 
    \[
        0 \longrightarrow P_n \longrightarrow \cdots \longrightarrow P_{n-m+1} \longrightarrow P_{n-m}'
    \]
    has vanishing cohomology (with any coefficients) in degrees above $n-m$ if and only if $P_\bullet$ does. Hence, we will work with this modified complex, and continue to denote it by $P_\bullet$. The cohomology $\H^i(P_\bullet; \widehat{R[G]}^\chi)$ in degrees $i > n-m$ is the cohomology of the cochain complex $\Hom_{R[G]}(P_i, \widehat{R[G]}^\chi) \cong \Hom_{R[G]}(P_i, R[G]) \otimes_{R[G]} \widehat{R[G]}^\chi$. Since $P_i$ is finitely generated, $\Hom_{R[G]}(P_i, R[G])$ is projective and finitely generated as an $R[G]$-module. The result then immediately follows from \cref{lem:open}. \qedhere
\end{proof}

As we saw in \cref{ex:Novikov}, the Novikov ring associated to an integral character $\chi \colon G \rightarrow \Z$ is a special case of the construction $R_\alpha[t\inv,t\rrbracket$. Thus, the results of the previous section readily apply to give theorems about group cohomology. The first such result is an immediate consequence of \cref{cor:cd_chain_complex}, and establishes the converse to \cite[Theorem 3.5]{Fisher_freebyZ} in the case of integral characters.

\begin{thm}\label{thm:if_and_only_if}
    Let $G$ be a group and let $R$ be a ring such that $\cd_R(G) < \infty$ and $G$ is of type $\FP^*_1(R)$. Let $\chi \colon G \rightarrow \Z$ be an integral character. Then $\cd_R(\ker \chi) = \cd_R(G)-1$ if and only if $\pm \chi \in \Sigma_1^*(G)$.
\end{thm}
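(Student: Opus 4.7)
The plan is to reduce the statement directly to \cref{cor:cd_chain_complex}, applied to a carefully chosen projective resolution of the trivial $R[G]$-module. Set $n = \cd_R(G)$ and $N = \ker \chi$. Since $\chi$ is integral and non-zero, I can pick $t \in G$ with $\chi(t) = 1$, so that $G = N \rtimes \langle t \rangle$; this gives the identification $R[G] \cong R[N]_\alpha[t^{-1},t]$, where $\alpha$ is the automorphism of $R[N]$ induced by conjugation by $t$. Under this identification, $\widehat{R[G]}^{\chi}$ corresponds to $R[N]_\alpha[t^{-1},t\rrbracket$ and $\widehat{R[G]}^{-\chi}$ to $R[N]_\alpha\llbracket t^{-1},t]$, as recorded in \cref{ex:Novikov}.

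Next, I use the $\FP^*_1(R)$ hypothesis to pick a projective resolution $P_\bullet \to R \to 0$ of the trivial $R[G]$-module with $P_i = 0$ for $i > n$ and $P_n$ finitely generated. Since $R[G]$ is free as an $R[N]$-module (with basis the powers of $t$), $\iota^! P_\bullet$ is still a projective resolution of $R$, now over $R[N]$. Consequently the cohomology of $\iota^! P_\bullet$ with arbitrary coefficients computes $\H^*(N;-)$, so $\cd(\iota^! P_\bullet) = \cd_R(N)$. Similarly, since $P_\bullet$ computes $\H^*(G;-)$, the complex $P_\bullet$ has cohomological dimension exactly $n$.

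With this setup, the conclusion is an immediate application of \cref{cor:cd_chain_complex} to $P_\bullet$, regarded as a bounded complex of finitely generated projective $R[N]_\alpha[t^{-1},t]$-modules whose top module $P_n$ is finitely generated. That corollary yields $\cd(\iota^! P_\bullet) = n-1$ if and only if
\[
    \H^n\bigl(P_\bullet; R[N]_\alpha \llbracket t^{-1},t]\bigr) \ = \ \H^n\bigl(P_\bullet; R[N]_\alpha[t^{-1},t\rrbracket\bigr) \ = \ 0,
\]
which, via the identifications in the first paragraph, is precisely $\H^n(G; \widehat{R[G]}^{\pm\chi}) = 0$, i.e.\ $\pm\chi \in \Sigma_1^*(G;R)$.

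Honestly, there isn't really a hard step here — all the work has already been done in \cref{cor:cd_chain_complex}. The only thing to be mindful of is checking that the cohomological dimension of $P_\bullet$ is \emph{exactly} $n$ (needed so that \cref{cor:cd_chain_complex} is non-vacuously applicable), which follows because $P_\bullet$ is a resolution and $\cd_R(G) = n$. No appeal to the Lyndon--Hochschild--Serre lower bound $\cd_R(N) \geqslant n-1$ is needed, since the equality $\cd(\iota^! P_\bullet) = \cd_R(N)$ is enough.
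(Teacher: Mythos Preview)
Your proof is correct and follows exactly the paper's approach: the paper states that \cref{thm:if_and_only_if} is ``an immediate consequence of \cref{cor:cd_chain_complex}'', and you have spelled out precisely how that reduction goes. One small slip: the $\FP^*_1(R)$ hypothesis only guarantees $P_n$ is finitely generated, not all $P_i$, so drop the phrase ``finitely generated'' before ``projective'' in your description of $P_\bullet$ (this does not affect the argument, since \cref{cor:cd_chain_complex} only requires $C_n$ to be finitely generated).
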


\cref{introcor:open} follows at once from \cref{thm:if_and_only_if} and \cref{prop:open}.

\begin{cor}\label{cor:open}
    Let $G$ be a group of type $\FP(R)$ for some ring $R$. The set of characters
    \[
        \{ \chi \colon G \longrightarrow \Q \ : \ \cd_R(\ker \chi) = \cd_R(G) - 1 \}
    \]
    is open in $\H^1(G;\Q)$.
\end{cor}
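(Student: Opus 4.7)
The plan is to express the condition $\cd_R(\ker\chi) = \cd_R(G) - 1$ in terms of membership in $\Sigma_1^*(G;R)$ and then invoke the openness result \cref{prop:open}. Since $G$ is of type $\FP(R)$, it has finite cohomological dimension over $R$ and is in particular of type $\FP_1^*(R)$, so both \cref{thm:if_and_only_if} and \cref{prop:open} apply.

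First I would note that the zero character is automatically excluded from the set, since $\ker 0 = G$ and hence $\cd_R(\ker 0) = \cd_R(G) \neq \cd_R(G) - 1$. Thus the set in question is contained in $\H^1(G;\Q) \smallsetminus \{0\}$. Next I would observe that $\nov{R[G]}{\chi}$ depends only on the positive ray spanned by $\chi$, because the finiteness condition defining it is unchanged when $\chi$ is multiplied by a positive real number. Consequently $\Sigma_1^*(G;R)$ is invariant under positive rescaling, and $\ker\chi$ clearly is too, so \cref{thm:if_and_only_if} (stated for integral characters) extends verbatim to non-zero rational characters: every such $\chi$ is a positive rational multiple of an integral character, and therefore $\cd_R(\ker\chi) = \cd_R(G)-1$ if and only if $\pm\chi \in \Sigma_1^*(G;R)$.

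Combining these observations, the set in question equals
\[
    \bigl(\Sigma_1^*(G;R) \cap (-\Sigma_1^*(G;R))\bigr) \cap \H^1(G;\Q).
\]
By \cref{prop:open}, $\Sigma_1^*(G;R)$ is open in $\H^1(G;\R)$. Negation is a homeomorphism of $\H^1(G;\R)$, so $-\Sigma_1^*(G;R)$ is also open, whence so is their intersection. Giving $\H^1(G;\Q)$ the subspace topology inherited from $\H^1(G;\R)$, the intersection of an open subset of $\H^1(G;\R)$ with $\H^1(G;\Q)$ is open in $\H^1(G;\Q)$. There is no substantive obstacle here: the corollary is immediate once the positive-scaling invariance is recorded, which is what allows the passage from integral characters (to which \cref{thm:if_and_only_if} directly speaks) to arbitrary non-zero rational characters.
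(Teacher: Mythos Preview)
Your proposal is correct and follows exactly the route the paper takes: the corollary is stated as an immediate consequence of \cref{thm:if_and_only_if} and \cref{prop:open}, and you have simply spelled out the details (exclusion of the zero character, positive-scaling invariance to pass from integral to rational characters, and the subspace-topology argument) that the paper leaves implicit.
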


The remainder of the section will be devoted to proving \cref{introthm:coabelian}. We first need a result about the finiteness properties of kernels of maps to poly-$\Z$ groups.

\begin{notation}\label{not:polyZ}
    Recall that a group $P$ is \emph{poly-$\Z$} if it admits a subnormal series
    \[
        \{1\} = P_d \leqslant P_{d-1} \leqslant \cdots \leqslant P_1 \leqslant P_0 = P
    \]
    such that $P_i/P_{i+1} \cong \Z$ for all $0 \leqslant i < d$. The integer $d$ is the \emph{length} of the poly-$\Z$ group $P$; it coincides with its cohomological dimension, and therefore is uniquely determined. We will always assume that our poly-$\Z$ groups come with a fixed subnormal series as above.

    For an epimorphism $\chi \colon G \rightarrow P$ with $P$ poly-$\Z$, there is a distinguished sequence of integral characters 
    \[
        \chi_i \colon G_i \longrightarrow G_i/G_{i+1} \cong \Z
    \]
    where $G_i := \chi\inv(P_i)$. Note that if $P$ is of length $d$, then $G_d = \ker \chi$.
\end{notation}

The following two results are easy consequences of \cref{prop:Sikorav} and \cref{thm:cd_limits_commute}. We will need them when studying We use the following notation: if $M$ is an $R[G]$-module and $N \leqslant G$ is a pair of groups, then the $R[N]$-module obtained by restricting $M$ will be denoted by $\res_N^G M$. We follow \cref{not:polyZ} in the statement of the next proposition.

\begin{prop}\label{prop:finite_polyZ}
    Let $\chi \colon G \rightarrow P$ be an epimorphism, where $P$ is a poly-$\Z$ group of length $d$. Let $C_\bullet$ be a chain complex of projective $R[G]$-modules of finite $n$-type. The following are equivalent:
    \begin{enumerate}
        \item\label{item:iterated_finite} $\res_{G_d}^G C_\bullet$ is of finite $n$-type;
        \item\label{item:iterated_vanish} $\H_i(\res_{G_j}^G C_\bullet;\widehat{R[G_j]}^{\pm\chi_j}) = 0$ for all $0 \leqslant j < d$.
    \end{enumerate}
\end{prop}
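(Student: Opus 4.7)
The plan is to proceed by induction on $d$, the length of the poly-$\Z$ group $P$. The base case $d = 1$ is precisely \cref{prop:Sikorav}: with $\chi = \chi_0 \colon G \to \Z$ and $G_1 = \ker \chi$, the identification $R[G] \cong R[G_1]_\alpha[t^{-1},t]$ from \cref{ex:Novikov} (where $\alpha$ is conjugation by any lift of a generator of $G/G_1$) places us directly in the framework of Sikorav's theorem.

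For the inductive step, I would consider the composition $\chi' \colon G \xrightarrow{\chi} P \twoheadrightarrow P/P_{d-1}$, which is an epimorphism onto a poly-$\Z$ group of length $d-1$ whose associated subgroup filtration is $G_0 \geqslant G_1 \geqslant \cdots \geqslant G_{d-1}$, with distinguished characters $\chi_0, \ldots, \chi_{d-2}$. Applying the inductive hypothesis to $\chi'$ yields: $\res_{G_{d-1}}^G C_\bullet$ is of finite $n$-type over $R[G_{d-1}]$ if and only if $\H_i(\res_{G_j}^G C_\bullet; \widehat{R[G_j]}^{\pm \chi_j}) = 0$ for all $0 \leqslant j \leqslant d-2$ and $i \leqslant n$. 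Once finite $n$-type at level $d-1$ is in hand, I can replace $\res_{G_{d-1}}^G C_\bullet$ by a chain-homotopy equivalent complex finitely generated in degrees $\leqslant n$ over $R[G_{d-1}]$. The further identification $R[G_{d-1}] \cong R[G_d]_\alpha[t^{-1},t]$ then lets me invoke Sikorav at $\chi_{d-1}$, giving that $\res_{G_d}^G C_\bullet$ is of finite $n$-type if and only if the $j = d-1$ Novikov vanishing holds.

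Chained together, these equivalences immediately deliver the direction $(2) \Rightarrow (1)$: the $j \leqslant d-2$ vanishings provide finite $n$-type at level $d-1$ via the inductive hypothesis, and the $j = d-1$ vanishing then upgrades this to finite $n$-type at level $d$ via Sikorav. The direction $(1) \Rightarrow (2)$ is the main obstacle, since applying the inductive hypothesis requires knowing a priori that $\res_{G_{d-1}}^G C_\bullet$ is of finite $n$-type over $R[G_{d-1}]$, which is not obviously implied by finite $n$-type at the deeper level $d$. I expect to handle this by strengthening the inductive statement so that finite $n$-type is propagated up the hierarchy alongside the Novikov vanishings, exploiting chain-homotopy invariance of Sikorav's criterion at each successive level; alternatively, one can iteratively replace the restricted complex at each stage by a finitely generated representative and cascade Sikorav's theorem downward through the filtration.
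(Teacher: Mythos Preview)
Your induction scheme is essentially the same as the paper's, just organised with the last step peeled off (you split $G \to G_{d-1} \to G_d$) rather than the first (the paper splits $G \to G_1 \to G_d$). The direction \ref{item:iterated_vanish} $\Rightarrow$ \ref{item:iterated_finite} works exactly as you say.

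The gap you flag in \ref{item:iterated_finite} $\Rightarrow$ \ref{item:iterated_vanish} is real and is the only substantive point in the proof; your two suggested fixes are too vague to count as a resolution. What is needed is the statement that finite $n$-type propagates \emph{upwards} along the filtration: if $\res_{G_d}^G C_\bullet$ is of finite $n$-type over $R[G_d]$, then $\res_{G_j}^G C_\bullet$ is of finite $n$-type over $R[G_j]$ for every $j$. The paper proves this (for $j=1$, which is all that is needed in its organisation of the induction) via the Lyndon--Hochschild--Serre spectral sequence
\[
    \H^p(G_j/G_d;\, \H^q(\res_{G_d}^G C_\bullet; M)) \ \Longrightarrow \ \H^{p+q}(\res_{G_j}^G C_\bullet; M),
\]
combined with Brown's criterion (\cref{thm:Brown_crit_finiteness}): for a directed system $\{M_l\}$ with vanishing limit, the inner term $\varinjlim \H^q(\res_{G_d}^G C_\bullet; M_l)$ vanishes for $q \leqslant n$ by finite $n$-type at level $d$, and $\H^p(G_j/G_d;-)$ commutes with direct limits because $G_j/G_d$ is poly-$\Z$ and hence of type $\FP$. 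This forces $\varinjlim \H^{p+q}(\res_{G_j}^G C_\bullet; M_l) = 0$ for $p+q \leqslant n$, and Brown's criterion gives finite $n$-type at level $j$. With this lemma in hand, your inductive step goes through verbatim.
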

\begin{proof}
    We prove the result by induction on $d$. If $d = 1$, then the statement of the proposition is identical to that of \cref{prop:Sikorav}.

    Suppose \ref{item:iterated_finite} holds. Then $\res_{G_1}^G C_\bullet$ is of finite $n$-type. Indeed, there is a convergent spectral sequence
    \[
        \H^p(G_1/G_d; \H^q(\res_{G_d}^G C_\bullet; M)) \ \Longrightarrow \ \H^{p+q}(\res_{G_1}^G C_\bullet; M)
    \]
    for any $R[G_1]$-module $M$. This is just the Lyndon--Hochschild--Serre spectral sequence, except one replaces a resolution of the trivial $R[G]$-module $R$ by the chain complex $\res_{G_d}^G C_\bullet$; the proof of its existence is identical (see \cite[Chapter VII]{BrownGroupCohomology}). By \cref{thm:Brown_crit_finiteness}, $\varinjlim \H^q(\res_{G_d}^G C_\bullet; M_j) = 0$ for all $q \leqslant n$ and all directed systems $\{M_j\}$ with vanishing direct limit. Moreover, $\H^p(G_1/G_d; -)$ preserves direct limits in all degrees, since $G_1/G_d$ is a poly-$\Z$ group and therefore of type $\FP$. Thus, $\varinjlim \H^p(\res_{G_1}^G C_\bullet; M_j) = 0$ for all $p \leqslant n$ and all directed systems $\{M_j\}$ with vanishing direct limit. Again by \cref{thm:Brown_crit_finiteness}, $\res_{G_1}^G C_\bullet$ has the appropriate finiteness conditions. By \cref{prop:Sikorav}, we conclude that
    \[
        \H_j(\res^G_{G_1} C_\bullet ; \widehat{R[G]}^{\pm\chi_0}) \ = \ 0
    \]
    for all $j \leqslant n$. By induction, we also have
    \[
        \H_j(\res_{G_{i+1}}^{G} C_\bullet; \widehat{R[G_i]}^{\pm\chi_i}) \ \cong \ \H_j(\res_{G_{i+1}}^{G_1} C_\bullet; \widehat{R[G_i]}^{\pm\chi_i}) \ = \ 0
    \]
    for all $j \leqslant n$ and all $0 < i < d$.

    \smallskip

    If \ref{item:iterated_vanish} holds, then $\res_{G_1}^G C_\bullet$ is of finite $n$-type by \cref{prop:Sikorav}. By induction, $\res_{G_d}^{G_1} C_\bullet$ is of finite $n$-type. \qedhere
\end{proof}

\begin{prop}\label{prop:cd_polyZ}
    Let $\chi \colon G \rightarrow P$ be an epimorphism, where $P$ is a poly-$\Z$ group of length $d$. Let $C_\bullet$ be a chain complex of projective $R[G]$-modules such that $C_i = 0$ for all $i > n$ and such that $\H^i(G;-)$ commutes with direct limits of $R[G]$-modules for all $i > n-d$. If 
    \[
        \H^i(\res_{G_j}^G C_\bullet;\widehat{R[G_j]}^{\pm\chi_j}) \ = \ 0
    \]
    for all $i > n-d$ and all $0 \leqslant j < n$, then $\res_{G_d}^G C_\bullet$ is of cohomological dimension at most $n - d$.
\end{prop}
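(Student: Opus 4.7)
The plan is to proceed by induction on the length $d$ of the poly-$\Z$ group $P$, peeling off one cyclic quotient at a time via \cref{thm:cd_limits_commute}. Note that I read the hypothesis ``$\H^i(G;-)$ commutes with direct limits'' as ``$\H^i(C_\bullet;-)$ commutes with direct limits,'' and the range ``$0 \leqslant j < n$'' as ``$0 \leqslant j < d$.''

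For the base case $d=1$, the character $\chi = \chi_0 \colon G \to \Z$ has kernel $G_1$, and the hypothesis gives $\H^n(C_\bullet; \widehat{R[G]}^{\pm\chi_0}) = 0$ together with $\H^i(C_\bullet;-)$ commuting with direct limits for $i \geqslant n$ (vacuously true for $i > n$). A direct application of \cref{thm:cd_limits_commute} with $m = n$ yields $\cd(\res_{G_1}^G C_\bullet) \leqslant n-1 = n-d$.

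For the inductive step, I apply \cref{thm:cd_limits_commute} with $m = n-d+1$ to $C_\bullet$, viewed as a complex of $R[G_1]_\alpha[t\inv,t]$-modules with $\alpha$ conjugation by a lift of a generator of $\chi_0(G)$. The $j=0$ hypotheses feed directly into this application and yield two things: $\res^G_{G_1} C_\bullet$ has cohomological dimension at most $n-1$, and $\H^i(\res^G_{G_1} C_\bullet;-)$ commutes with direct limits of $R[G_1]$-modules for all $i \geqslant n-d+1$, i.e.\ $i > (n-1)-(d-1)$. Choose a chain complex $C'_\bullet$ of projective $R[G_1]$-modules, concentrated in degrees $\leqslant n-1$, chain homotopy equivalent to $\res^G_{G_1} C_\bullet$ over $R[G_1]$. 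Chain homotopy equivalence is preserved under further restriction, so for every $1 \leqslant j < d$ we have $\res^{G_1}_{G_j} C'_\bullet \simeq \res^G_{G_j} C_\bullet$ over $R[G_j]$, and hence the Novikov vanishing in the hypothesis transfers to $\H^i(\res^{G_1}_{G_j} C'_\bullet; \widehat{R[G_j]}^{\pm\chi_j}) = 0$ for $i > n-d$. Now I apply the inductive hypothesis to $C'_\bullet$ with the poly-$\Z$ group $P_1$ of length $d-1$ and the distinguished filtration $G_1 \geqslant G_2 \geqslant \cdots \geqslant G_d$; the parameters are $(n-1, d-1)$, so the conclusion is $\cd(\res^{G_1}_{G_d} C'_\bullet) \leqslant (n-1)-(d-1) = n-d$, and the same bound holds for the chain homotopy equivalent $\res^G_{G_d} C_\bullet$.

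The main obstacle is bookkeeping: one must check that the various restrictions and chain homotopy equivalences compose correctly, that the ``commuting with direct limits'' hypothesis transfers along chain homotopy equivalence (it does, since such equivalences induce functorial isomorphisms on cohomology with arbitrary coefficients), and that the index $m = n-d+1$ in \cref{thm:cd_limits_commute} corresponds to the range $i > (n-1)-(d-1)$ needed to re-enter the inductive hypothesis after peeling off one layer. Once these alignments are in place, the induction runs cleanly.
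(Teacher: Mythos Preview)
Your proof is correct and follows essentially the same inductive scheme as the paper: peel off one cyclic layer at a time and re-enter the induction with parameters $(n-1,d-1)$. In fact your citation of \cref{thm:cd_limits_commute} is more apt than the paper's citation of \cref{cor:cd_chain_complex}, since the latter requires $C_n$ to be finitely generated and does not by itself furnish the ``commutes with direct limits'' conclusion needed for the inductive hypothesis.
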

\begin{proof}
    We prove the result by induction on $d$. If $d = 1$, then it follows immediately from \cref{cor:cd_chain_complex}. Suppose that $d > 1$. Again by \cref{cor:cd_chain_complex}, $\res_{G_1}^G C_\bullet$ is chain homotopy equivalent to a chain complex of projective modules $D_\bullet$ which is of cohomological dimension at most $n-1$ and such that $\H^i(D_\bullet;-)$ commutes with direct limits for all $i > n-d$. The claim now follows by induction. \qedhere
\end{proof}

For the proof of \cref{introthm:coabelian}, we will need the following result of Farber, Geoghegan, and Sch\"{u}tz, which gives a version of Sikorav's Theorem for chain complexes.

\begin{thm}[{\cite[Theorem 8]{FarberGeogheganSchutz_SikoravComplexes}}]\label{thm:Sikorav_for_complexes}
    Let $C_\bullet$ be a chain complex of projective $R[G]$-modules such $C_i$ is finitely generated for $i \leqslant n$ and zero for $i < 0$. If $N \trianglelefteqslant G$ is such that $G/N$ is Abelian, then the following are equivalent:
    \begin{enumerate}
        \item $\res_N^G C_\bullet$ is of finite $n$-type;
        \item for all $\chi \in S(G,N)$, we have $\H_i(C_\bullet;\widehat{R[G]}^\chi) = 0$ for all $i \leqslant n$.
    \end{enumerate}
\end{thm}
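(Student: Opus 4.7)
The plan is to deduce the equivalence from \cref{prop:finite_polyZ} combined with structural reductions on $G/N$, via an induction on the torsion-free rank $r$ of $G/N$. Since $G/N$ is a finitely generated Abelian group (with $G$ finitely generated implicit in the hypotheses), write $G/N = \Z^r \oplus T$ with $T$ finite torsion and let $K \supseteq N$ denote the preimage of $T$ in $G$. Since $\R$ is torsion-free, $S(G, N) = S(G, K)$, and since $[K:N] < \infty$, $\res_N^G C_\bullet$ is of finite $n$-type iff $\res_K^G C_\bullet$ is. So I may assume $G/N \cong \Z^r$, and I fix a subnormal series $\Z^r = P_0 \supsetneq P_1 \supsetneq \cdots \supsetneq P_r = 0$ with infinite cyclic quotients, defining subgroups $G_j = \pi^{-1}(P_j)$ and staircase characters $\chi_j \colon G_j \to \Z$ as in \cref{not:polyZ}.

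For the direction (2) $\Rightarrow$ (1), I induct on $r$. The base $r = 1$ is precisely \cref{prop:Sikorav}. For $r \geqslant 2$, hypothesis (2) applied to $\chi_0 \in S(G, N)$ together with \cref{prop:Sikorav} gives that $\res_{G_1}^G C_\bullet$ is of finite $n$-type. Since $G_1/N \cong \Z^{r-1}$, the inductive hypothesis would apply to the pair $(G_1, N)$ with the complex $\res_{G_1}^G C_\bullet$, provided I verify that $\H_i(\res_{G_1}^G C_\bullet; \widehat{R[G_1]}^\psi) = 0$ for all $\psi \in S(G_1, N)$ and $i \leqslant n$. Given such $\psi$, I extend it to $\tilde\psi \in S(G, N)$: this is possible because $G/N$ being Abelian forces $\psi$ to be invariant under the $G$-conjugation action on $G_1/N$, and the 5-term exact sequence of the Lyndon--Hochschild--Serre spectral sequence, combined with $\H^2(G/G_1;\R) = \H^2(\Z;\R) = 0$, produces the extension. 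Hypothesis (2) then yields $\H_i(C_\bullet; \widehat{R[G]}^{\tilde\psi}) = 0$ for $i \leqslant n$.

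The main obstacle will be translating this vanishing over $R[G]$ to the desired vanishing over $R[G_1]$. The natural inclusion $\widehat{R[G_1]}^\psi \hookrightarrow \widehat{R[G]}^{\tilde\psi}$ realises $\widehat{R[G_1]}^\psi$ as an $R[G_1]$-linear direct summand, but not as an $R[G]$-summand, and the natural map $\widehat{R[G_1]}^\psi \otimes_{R[G_1]} R[G] \to \widehat{R[G]}^{\tilde\psi}$ is a proper inclusion (the Novikov completion is strictly larger than the twisted Laurent extension), so Shapiro's Lemma cannot be applied directly. I would instead combine the $R[G_1]$-decomposition with the Lyndon--Hochschild--Serre spectral sequence
\[
E^2_{p, q} \ = \ \H_p\bigl(G/G_1; \H_q(\res_{G_1}^G C_\bullet; \widehat{R[G]}^{\tilde\psi})\bigr) \ \Longrightarrow \ \H_{p+q}(C_\bullet; \widehat{R[G]}^{\tilde\psi}),
\]
which degenerates at $E^2$ since $G/G_1 \cong \Z$ has cohomological dimension one, to extract the needed vanishing.

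The direction (1) $\Rightarrow$ (2) proceeds dually: for rational $\chi \in S(G, N)$, adapt the subnormal series so that $\chi$ is proportional to $\chi_0$ and apply \cref{prop:finite_polyZ} (together with a Lyndon--Hochschild--Serre argument showing $\res_L^G C_\bullet$ is of finite $n$-type for $L = \ker \chi$) to obtain vanishing Novikov homology via \cref{prop:Sikorav}. For non-rational $\chi$, \cref{lem:open} combined with a valuation-theoretic approximation argument in the spirit of Bieri--Neumann--Strebel completes the proof. The non-rational case is the technical heart of the argument, since openness of vanishing Novikov homology combined with density of rational characters in $S(G,N)$ does not by itself imply that the whole space $S(G,N)$ consists of characters with vanishing Novikov homology; one must construct Novikov chain contractions for $\chi$ from those for a sequence of rational approximations via a careful passage to the limit.
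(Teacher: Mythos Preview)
First, note that the paper does not prove this theorem: it is quoted verbatim from Farber--Geoghegan--Sch\"utz and used as a black box. So there is no ``paper's own proof'' to compare against; the question is whether your proposed argument, built from the paper's internal tools (\cref{prop:Sikorav}, \cref{prop:finite_polyZ}, \cref{lem:open}), actually establishes the result.

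There is a genuine gap in your inductive step for the direction (2) $\Rightarrow$ (1). You correctly observe that $\widehat{R[G_1]}^\psi$ is an $R[G_1]$-direct summand of $\widehat{R[G]}^{\tilde\psi}$, so it would suffice to show $\H_i(\res_{G_1}^G C_\bullet; \widehat{R[G]}^{\tilde\psi}) = 0$. But the Lyndon--Hochschild--Serre spectral sequence does not give this. With $G/G_1 \cong \Z$, the two-column spectral sequence degenerates and the vanishing of $\H_i(C_\bullet;\widehat{R[G]}^{\tilde\psi})$ for $i\leqslant n$ yields only that the $\Z$-coinvariants of $\H_i(\res_{G_1}^G C_\bullet; \widehat{R[G]}^{\tilde\psi})$ vanish for $i\leqslant n$ and the $\Z$-invariants vanish for $i\leqslant n-1$. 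A module on which $1-t$ acts bijectively need not be zero, so you cannot conclude that $\H_i(\res_{G_1}^G C_\bullet; \widehat{R[G]}^{\tilde\psi})$ vanishes, nor that its summand $\H_i(\res_{G_1}^G C_\bullet; \widehat{R[G_1]}^\psi)$ does. This is precisely the descent problem that prevents the theorem from being a formal consequence of \cref{prop:finite_polyZ}: hypothesis (2) of \cref{prop:finite_polyZ} involves Novikov rings of the \emph{intermediate} subgroups $G_j$, whereas the theorem only assumes vanishing over Novikov rings of $G$ itself.

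Your direction (1) $\Rightarrow$ (2) is fine for rational characters, but for irrational $\chi$ you have only an outline. You are right that openness plus density of rationals is insufficient, and you do not supply the ``careful passage to the limit''; this is where the Farber--Geoghegan--Sch\"utz argument does real work, constructing chain contractions uniformly over all characters via the geometry of valuations rather than treating rational and irrational characters separately.
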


\begin{rem}
    In \cite{FarberGeogheganSchutz_SikoravComplexes}, the result is stated for $R = \Z$. Just as in the case of Sikorav's Theorem for groups, the proof goes through to the case of a general unital associative ring $R$ without modification.
\end{rem}

We are now ready to prove \cref{introthm:coabelian} for chain complexes of projectives. Recall that if $N \leqslant G$ is a pair of groups and $M$ is a left $R[N]$-module, then the corresponding \emph{induced module} is $\Ind_N^G M = R[G] \otimes_{R[N]} M$, and is a left $R[G]$-module.

\begin{thm}\label{thm:co_abelian_cd}
    Let $C_\bullet$ be a chain complex of finitely generated projective $R[G]$-modules such that $C_i = 0$ for all $i > n$, and let $\chi \colon G \rightarrow \R$ be a character with free Abelian image of rank $d$. If
    \[
        \H^i(C_\bullet; \widehat{R[G]}^\varphi) \ = \ 0
    \]
    for all $\varphi \in S(G,\ker \chi)$ and all $i > n-d$, then $\res_{\ker \chi}^G C_\bullet$ has cohomological dimension at most $n-d$.
\end{thm}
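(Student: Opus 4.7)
The plan is to prove the theorem by induction on $d$, the rank of $\chi(G)$.

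For the base case $d = 1$, after rescaling I may assume $\chi$ is an integral character; then $\pm \chi \in S(G, \ker \chi)$, so the hypothesis gives $\H^n(C_\bullet; \widehat{R[G]}^{\pm \chi}) = 0$, and \cref{cor:cd_chain_complex} immediately yields that $\res_{\ker \chi}^G C_\bullet$ has cohomological dimension at most $n - 1$.

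For the inductive step with $d > 1$, I would factor $\chi$ as $G \twoheadrightarrow \Z^d \hookrightarrow \R$, pick a basis $e_1, \dots, e_d$ of $\Z^d$, and let $\pi \colon G \to \Z$ be the projection onto the first coordinate with kernel $H \trianglelefteqslant G$. Since $\pi \in S(G, \ker \chi)$, the hypothesis yields $\H^i(C_\bullet; \widehat{R[G]}^{\pm \pi}) = 0$ for all $i > n - d$. Applying \cref{thm:cd_limits_commute} with $m = n - d + 1$, the restriction $\res_H^G C_\bullet$ has cohomological dimension at most $n - 1$, and its cohomology functors commute with direct limits in degrees at least $n - d + 1$. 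After replacing $\res_H^G C_\bullet$ with a chain homotopy equivalent complex $D_\bullet$ of finitely generated projective $R[H]$-modules with $D_i = 0$ for $i > n - 1$, I would apply the inductive hypothesis to $D_\bullet$ together with $\chi|_H \colon H \to \R$, a character of rank $d - 1$ with kernel $\ker \chi$. The conclusion $\cd(\res_{\ker \chi}^H D_\bullet) \leqslant (n-1) - (d-1) = n - d$ transfers to $\res_{\ker \chi}^G C_\bullet$ via the chain homotopy equivalence over $R[\ker \chi]$, completing the induction.

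The main obstacle is verifying the inductive hypothesis for $D_\bullet$: namely, showing that $\H^i(\res_H^G C_\bullet; \widehat{R[H]}^\varphi) = 0$ for every $\varphi \in S(H, \ker \chi)$ and every $i > n - d$. For this, I would identify $R[G] \cong R[H]_\alpha[t\inv, t]$ (where $\alpha$ is the conjugation action of a lift $t \in G$ of the generator of $G/H$) and apply the long exact sequence of \cref{cor:LES} to $C_\bullet$ with coefficients $M = \widehat{R[H]}^\varphi$. By Shapiro's Lemma the term $\H^i(C_\bullet; M_\alpha \llbracket t\inv, t \rrbracket)$ coincides with the desired $\H^i(\res_H^G C_\bullet; M)$, so it suffices to show the vanishing of the remaining terms $\H^i(C_\bullet; M_\alpha[t\inv, t]) \cong \H^i(C_\bullet; \Ind_H^G M)$ and $\H^i(C_\bullet; M_\alpha\llbracket t\inv, t] \oplus M_\alpha[t\inv, t\rrbracket)$ for $i > n - d$. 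The key technical input I expect to use is the identification of each of these coefficient modules with, or as a filtered colimit of, Novikov modules $\widehat{R[G]}^{\tilde\psi}$ for characters $\tilde\psi \in S(G, \ker \chi)$: extending $\varphi$ by zero on $G/H$ produces one such extension used for the induced term, while extending by $\pm \epsilon \pi$ for small $\epsilon > 0$ produces characters whose Novikov rings compute the one-sided Laurent terms. Since each such $\tilde\psi$ lies in $S(G, \ker \chi)$, the hypothesis of the theorem applies directly and delivers the required vanishing.
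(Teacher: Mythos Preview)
Your induction strategy is natural, and the one-sided Laurent terms do behave as you hope: $M_\alpha[t\inv,t\rrbracket$ and $M_\alpha\llbracket t\inv,t]$ (for $M = \widehat{R[H]}^\varphi$) are modules over $\widehat{R[G]}^{\pi}$ and $\widehat{R[G]}^{-\pi}$ respectively, so \cref{lem:acyclic_ring_module} kills their cohomology in degrees $> n-d$. The gap is the induced term $M_\alpha[t\inv,t] = \Ind_H^G \widehat{R[H]}^\varphi$. Your proposed identification with $\widehat{R[G]}^{\tilde\varphi}$ (the extension of $\varphi$ by zero on $t$) is false: $\Ind_H^G \widehat{R[H]}^\varphi = \bigoplus_n t^n \widehat{R[H]}^\varphi$ has finite support in $n$, whereas $\widehat{R[G]}^{\tilde\varphi}$ contains elements such as $\sum_{m \geqslant 0} t^m h^m$ for any $h \in H$ with $\varphi(h) > 0$. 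For the same reason $\Ind_H^G \widehat{R[H]}^\varphi$ is not a module over any Novikov ring of $G$, nor a filtered colimit of such modules. And the long exact sequence by itself only yields $\H^i(\res_H^G C_\bullet; M) \cong \H^{i+1}(C_\bullet; \Ind_H^G M)$ for $n-d < i < n$, which gives no vanishing without independent control of the right-hand side.

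The paper supplies exactly this missing control by dualising: since each $C_i$ is finitely generated projective, $\H^i(C_\bullet; \widehat{R[G]}^\varphi)$ is the Novikov \emph{homology} of the dual complex $\Hom_{R[G]}(C_\bullet, R[G])$, and \cref{thm:Sikorav_for_complexes} then shows that the restriction of this dual complex to $\ker\chi$ is of finite $(d-1)$-type. Feeding this through \cref{prop:finite_polyZ} produces the vanishing of $\H^i(C_\bullet; \Ind_{G_j}^G \widehat{R[G_j]}^{\pm\chi_j})$ for the iterated integral characters, which is precisely the input needed for an argument along your lines (the paper's Claim) followed by \cref{prop:cd_polyZ}. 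A secondary issue: your replacement of $\res_H^G C_\bullet$ by a complex $D_\bullet$ of \emph{finitely generated} projectives is also unjustified---each $\res_H^G C_i$ has infinite rank over $R[H]$, and there is no dual of Brown's criterion that manufactures finite generation at the top from cohomology commuting with direct limits there. The paper sidesteps this by using \cref{prop:cd_polyZ}, whose hypothesis is only that cohomology commutes with direct limits in the relevant range, rather than inducting on the theorem as stated.
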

\begin{proof}
    Since the modules $C_i$ are finitely generated, 
    \[
        \Hom_{R[G]}(C_\bullet, \widehat{R[G]}^\varphi) \ \cong \ \Hom_{R[G]}(C_\bullet, R[G]) \otimes_{R[G]} \widehat{R[G]}^\varphi.
    \]
    The Novikov acyclicity assumption and a \cref{thm:Sikorav_for_complexes} imply that 
    \[
        \res_{\ker\chi}^G \Hom_{R[G]}(C_\bullet, R[G])
    \]
    is chain homotopy equivalent to a chain complex $D_\bullet$ of projective $R[\ker\chi]$-modules such that $D_i$ is finitely generated for all $i > n-d$.

    For the remainder of the proof, we follow \cref{not:polyZ}. Let $P = \chi(G) \cong \Z^d$ and fix a series of subgroups 
    \[
        \{1\} = P_d \leqslant P_{d-1} \leqslant \cdots \leqslant P_1 \leqslant P_0 = P
    \]
    such that $P_i/P_{i+1} \cong \Z$ for all $0 \leqslant i < d$. Let the maps $\chi_i \colon G_i = \chi\inv(P_i) \rightarrow P_i/P_{i+1}$ be the associated integral characters. By \cref{prop:finite_polyZ}, for all $0 \leqslant j < d$, the cochain complexes 
    \begin{multline*}
        \res_{G_j}^G \Hom_{R[G]}(C_\bullet, R[G]) \otimes_{R[G_j]} \widehat{R[G_j]}^{\pm\chi_j} \\ 
        \cong \ \Hom_{R[G]}(C_\bullet, R[G]) \otimes_{R[G]} \Ind_{G_j}^G \widehat{R[G_j]}^{\pm\chi_j} \\
        \cong \ \Hom_{R[G]}(C_\bullet, \Ind_{G_j}^G \widehat{R[G_j]}^{\pm\chi_j})
    \end{multline*}
    are acyclic in degrees greater than $n-d$, so we have $\H^i(C_\bullet; \Ind_{G_j}^G \widehat{R[G_j]}^{\pm\chi_j}) = 0$ for all $i > n-d$ and $0 \leqslant j < d$. The result will follow quickly from the following claim.

    \begin{claim}
        Let $\chi \colon G \rightarrow P$ be an epimorphism, where $P$ is a poly-$\Z$ group of length $d$. Let $F_\bullet$ be a chain complex of projective $R[G]$-modules such that $F_i = 0$ for all $i > n$ and such that $\H^i(F_\bullet;-)$ commutes with direct limits for all $i > n-d$. If 
        \[
            \H^i(F_\bullet; \Ind_{G_j}^G \widehat{R[G_j]}^{\pm\chi_j}) \ = \ 0
        \]
        for all $i > n-d$ and $0 \leqslant j < d$, then $\H^i(\res_{G_j}^G F_\bullet; \widehat{R[G_j]}^{\pm\chi_j}) = 0$ for all $i > n-d$ and $0 \leqslant j < d$.
    \end{claim}
    \begin{proof}
        We will prove this by induction on $d$. If $d = 1$, then there is nothing to show, so we assume that $d > 1$. Fix some $j < d$. Let $M = \Ind_{G_j}^{G_1} \widehat{R[G_j]}^{\pm\chi_j}$, and let $t \in G$ map to the generator of $G/G_1$ such that $\chi_0(t) = 1$. As in \cref{obs:SES}, there is a short exact sequence
        \[
            0 \longrightarrow \Ind_{G_1}^G M \longrightarrow M_\alpha\llbracket t\inv, t] \oplus M_\alpha[t\inv, t\rrbracket \longrightarrow \coInd_{G_1}^G M \longrightarrow 0,
        \]
        where $\alpha$ is induced by the conjugation action of $t$. We know
        \[
            \H^i(F_\bullet; \Ind_{G_1}^G M) \ = \ \H^i(F_\bullet; \Ind_{G_j}^G \widehat{R[G_j]}^{\pm\chi_j}) \ = \ 0
        \]
        for all $i > n-d$. We also know that 
        \[
            \H^i(F_\bullet;M_\alpha\llbracket t\inv, t]) \ = \ \H^i(F_\bullet;M_\alpha[t\inv, t\rrbracket) \ = \ 0
        \]
        by \cref{lem:acyclic_ring_module}, since $M_\alpha[t\inv, t\rrbracket$ is a right $\widehat{R[G]}^{\chi_0} = R[G_1]_\alpha[t\inv,t\rrbracket$-module and
        \[
            \H^i(F_\bullet;\widehat{R[G]}^{\chi_0}) \ = \ 0
        \]
        (and similarly for $M_\alpha\llbracket t\inv,t]$). It then follows that
        \[
            \H^i(F_\bullet; \coInd_{G_1}^G M) \ = \ \H^i(\res_{G_1}^G F_\bullet; \Ind_{G_j}^{G_1} \widehat{R[G_j]}^{\pm\chi_j}) \ = \ 0.
        \]
        for all $i > n-d$. Since $\H^i(\res_{G_1}^G F_\bullet; -)$ commutes with direct limits for all $i > n-d$ by \cref{prop:cd_polyZ}, the claim holds by induction. \renewcommand\qedsymbol{$\diamond$}\qedhere
    \end{proof}

    By the claim, $\H^i(\res_{G_j}^G C_\bullet; \widehat{R[G_j]}^{\pm\chi_j}) = 0$ for all $i > n-d$ and all $0 \leqslant j < d$. By \cref{prop:cd_polyZ}, $\res_{\ker \chi}^G C_\bullet$ is of cohomological dimension at most $n - d$. \qedhere
\end{proof}

\cref{introthm:coabelian} follows immediately by taking $C_\bullet$ to be a projective resolution of $G$ witnessing the $\FP_d^*(R)$ property.

\begin{cor}\label{cor:coabelian}
    Let $G$ be a group of type $\FP^*_d(R)$ with $\cd_R(G) = n$, and suppose that $N \trianglelefteqslant G$ is such that $G/N \cong \Z^d$. If $S(G,N) \subseteq \Sigma_d^*(G;R)$, then $\cd_R(N) = n - d$.
\end{cor}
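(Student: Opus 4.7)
The plan is to reduce the corollary to \cref{thm:co_abelian_cd} by constructing an appropriate finitely generated chain complex and a character with image $\Z^d$. I would start with a projective resolution $P_\bullet \to R \to 0$ witnessing the $\FP^*_d(R)$-property: $P_i = 0$ for $i > n$, and $P_i$ is finitely generated projective for $i > n-d$. The image $\partial(P_{n-d+1}) \subseteq P_{n-d}$ is finitely generated, hence contained in a finitely generated free direct summand $P'_{n-d}$ of $P_{n-d}$. Forming the truncated complex
\[
    C_\bullet \ = \ \{0 \longrightarrow P_n \longrightarrow \cdots \longrightarrow P_{n-d+1} \longrightarrow P'_{n-d} \longrightarrow 0\}
\]
gives a chain complex of finitely generated projective $R[G]$-modules with $C_i = 0$ for $i > n$. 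As in the proof of \cref{prop:open}, truncation in low degrees does not affect high-degree cohomology, so $\H^i(C_\bullet; M) = \H^i(P_\bullet; M) = \H^i(G; M)$ for every $R[G]$-module $M$ and every $i > n-d$; the analogous identity holds for $N$-cohomology after restriction, since $P_\bullet$ restricts to a projective resolution of $R$ over $R[N]$.

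Next, I would construct a character $\chi \colon G \to \R$ with $\ker \chi = N$ and $\chi(G) \cong \Z^d$: fix an isomorphism $G/N \cong \Z^d$, choose $d$ rationally independent reals $r_1, \dots, r_d$, and compose $G \twoheadrightarrow G/N \cong \Z^d$ with the injective homomorphism $\Z^d \hookrightarrow \R$ sending standard basis vectors to the $r_i$. The hypothesis $S(G,N) \subseteq \Sigma^*_d(G;R)$ then translates to $\H^i(G; \widehat{R[G]}^\varphi) = 0$ for every $\varphi \in S(G, \ker \chi) = S(G, N)$ and every $i > n-d$, whence $\H^i(C_\bullet; \widehat{R[G]}^\varphi) = 0$ in the same range by the cohomology comparison above.

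At this point \cref{thm:co_abelian_cd} applies directly to $C_\bullet$ and $\chi$, yielding that $\res_N^G C_\bullet$ has cohomological dimension at most $n-d$. For any $R[N]$-module $M$ and any $i > n-d$,
\[
    \H^i(N; M) \ = \ \H^i(\res_N^G P_\bullet; M) \ = \ \H^i(\res_N^G C_\bullet; M) \ = \ 0,
\]
so $\cd_R(N) \leqslant n-d$. The reverse inequality is standard: the Lyndon--Hochschild--Serre spectral sequence for $1 \to N \to G \to \Z^d \to 1$ gives $\cd_R(G) \leqslant \cd_R(N) + \cd_R(\Z^d) = \cd_R(N) + d$, forcing $\cd_R(N) \geqslant n-d$. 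I do not anticipate a genuine obstacle here; the only point requiring attention is the truncation, to ensure that we feed \cref{thm:co_abelian_cd} a chain complex of finitely generated projectives without disturbing the relevant top-range cohomology.
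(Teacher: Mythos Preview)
Your proposal is correct and follows the paper's approach. The paper's proof is a single sentence: apply \cref{thm:co_abelian_cd} to a projective resolution witnessing the $\FP^*_d(R)$ property. You have correctly spelled out the truncation step (identical to the one in the proof of \cref{prop:open}) needed to obtain a complex of \emph{finitely generated} projectives, as well as the lower bound from the Lyndon--Hochschild--Serre spectral sequence, both of which the paper leaves implicit.
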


While \cref{cor:coabelian} has a converse for $d = 1$, the converse fails in general, as the following example shows. The author thanks Grigori Avramidi for telling him that many closed hyperbolic $3$-manifolds are free-by-$\Z^2$.

\begin{ex}\label{ex:bad_3_mfld}
    Let $M$ be a closed fibred hyperbolic $3$-manifold with $b_1(M) \geqslant 2$, and let $\chi \colon \pi_1(M) \rightarrow \Z$ be an algebraic fibration induced by a fibration of $M$ over $S^1$ with fibre a hyperbolic surface $S_g$. By openness of the $\Sigma$-invariant and the fact that $b_1(M) \geqslant 2$, we may perturb $\chi$ to obtain a new algebraic fibration $\chi' \colon \pi_1(M) \rightarrow \Z$ not equal to $\pm\chi$. The kernel of the map $f \colon \pi_1(M) \rightarrow \Z^2$ given by $\alpha \mapsto (\chi(\alpha), \chi'(\alpha))$ is an infinite-index subgroup of $\ker \chi \cong \pi_1(S_g)$ and therefore is non-finitely generated free. Hence, $\pi_1(M)$ is free-by-$\Z^2$. Let $F = \ker f$. By \cref{introthm:BNS}, there is a character $\chi \in S(\pi_1(M),F)$ such that $\H_1(\pi_1(M); \widehat{\Z[\pi_1(M)]}^\chi) \neq 0$ (because $F$ is not finitely generated), so $\H^2(\pi_1(M); \widehat{\Z[\pi_1(M)]}^\chi) \neq 0$ by Poincar\'e-duality. But the converse of \cref{cor:coabelian} would imply that $\H^i(\pi_1(M);\widehat{\Z[\pi_1(M)]}^\chi) = 0$ for $i = 2,3$, since $\cd(F) = 1$, so it cannot hold.
\end{ex}

We conclude the section by proving \cref{introcor:PD}. A group $G$ is an $n$-dimensional \emph{Poincar\'e-duality group} over $R$ (or, more briefly, a $\mathrm{PD}_R^n$-group) if it satisfies
\[
    \H^i(G;M) \ \cong \ \H_{n-i}(G;M)
\]
for every $R[G]$-module $M$. Homology commutes with direct limits, which implies that the cohomology of a Poincar\'e-duality group commutes with direct limits. Hence, by \cref{thm:Brown_crit_finiteness}, $\mathrm{PD}_R^n$-groups are of type $\FP(R)$. 

\begin{cor}\label{cor:PD}
    Let $G$ be a $\mathrm{PD}_R^n$-group and let $\chi \colon G \rightarrow \Z^d$ be an epimorphism. If $\ker \chi$ is of type $\FP_{d-1}(R)$, then $\cd_R(G) = n-d$.
\end{cor}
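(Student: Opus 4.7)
The plan is to reduce to \cref{cor:coabelian} by showing that the hypothesis on $\ker\chi$ forces $S(G,\ker\chi) \subseteq \Sigma_d^*(G;R)$, using Poincar\'e-duality to translate the finiteness condition into top-dimensional Novikov cohomology vanishing. Write $N = \ker\chi$, so $G/N \cong \Z^d$ and $n = \cd_R(G)$. Since $G$ is a $\mathrm{PD}_R^n$-group it is of type $\FP(R)$, so the trivial $R[G]$-module $R$ admits a projective resolution $P_\bullet$ consisting of finitely generated $R[G]$-modules.

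The first step is to produce enough Novikov homology vanishing. Restriction gives a projective resolution $\res_N^G P_\bullet$ of $R$ over $R[N]$. Since $N$ is of type $\FP_{d-1}(R)$, standard comparison of projective resolutions shows that $\res_N^G P_\bullet$ is chain homotopy equivalent over $R[N]$ to a projective resolution that is finitely generated in degrees $\leqslant d-1$; in other words, $\res_N^G P_\bullet$ is of finite $(d-1)$-type. Applying the Farber--Geoghegan--Sch\"utz chain-complex version of Sikorav's theorem (\cref{thm:Sikorav_for_complexes}) to $P_\bullet$ and $N$ then yields
\[
    \H_i(G; \widehat{R[G]}^\varphi) \ = \ 0 \qquad \text{for all } i \leqslant d-1 \text{ and all } \varphi \in S(G, N).
\]

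The second step is to invoke Poincar\'e-duality with coefficients in the module $\widehat{R[G]}^\varphi$. This gives
\[
    \H^{n-i}(G; \widehat{R[G]}^\varphi) \ \cong \ \H_i(G; \widehat{R[G]}^\varphi) \ = \ 0
\]
for every $i \leqslant d-1$, i.e.\ $\H^j(G;\widehat{R[G]}^\varphi) = 0$ for all $j > n-d$. By definition, this says precisely that $\varphi \in \Sigma_d^*(G;R)$. Since $\varphi \in S(G,N)$ was arbitrary, we conclude $S(G,N) \subseteq \Sigma_d^*(G;R)$. \cref{cor:coabelian} now applies (noting that a $\mathrm{PD}_R^n$-group is of type $\FP(R)$, hence of type $\FP_d^*(R)$), giving $\cd_R(N) = n-d$, as desired.

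There is no real obstacle here: once one has \cref{thm:Sikorav_for_complexes} and \cref{cor:coabelian} in hand, the proof is a short chain of implications, and the sole non-formal ingredient is using Poincar\'e-duality to swap the roles of homology and cohomology so that a finiteness-type hypothesis on the kernel translates into a cohomological-dimension-drop conclusion. The only minor point to verify carefully is that Poincar\'e-duality applies to the (non-finitely-generated) coefficient module $\widehat{R[G]}^\varphi$, which is immediate from the definition of $\mathrm{PD}_R^n$-group adopted in the paper.
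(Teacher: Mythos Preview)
Your proof is correct and follows essentially the same route as the paper's. The only cosmetic difference is that the paper cites the higher Bieri--Renz theorem (and \cite{Fisher_Improved} for general $R$) to obtain $S(G,N)\subseteq\Sigma_{d-1}(G;R)$, whereas you obtain the same Novikov homology vanishing by invoking the paper's own \cref{thm:Sikorav_for_complexes} applied to a finite-type resolution $P_\bullet$; after that, both arguments apply Poincar\'e-duality and \cref{cor:coabelian} identically.
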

\begin{proof}
    By the higher version of \cref{introthm:BNS} (see \cite{BieriRenzValutations}, or \cite[Theorem 5.3]{Fisher_Improved} for the case of an arbitrary ring $R$), $S(G,N) \subseteq \Sigma_{d-1}(G;R)$, where $N = \ker \chi$. By Poincar\'e-duality, $S(G,N) \subseteq \Sigma_d^*(G;R)$. By \cref{cor:coabelian}, $\cd_R(G) = n-d$. \qedhere
\end{proof}

\section{An application to RFRS groups}\label{sec:RFRS}

\begin{defn}
    A group $G$ is said to be \emph{residually finite rationally solvable}, or \emph{RFRS}, if 
    \begin{enumerate}
        \item there is a chain of finite-index normal subgroups $G = G_0 \geqslant G_1 \geqslant G_2 \geqslant \dots$ such that $\bigcap_{i \geqslant 0} G_i = \{1\}$, and
        \item $\ker(G_i \rightarrow \Q \otimes_\Z G_i/[G_i,G_i])$ is a subgroup of $G_{i+1}$ for all $i \geqslant 0$. 
    \end{enumerate}
    A chain satisfying these properties will be called a \emph{witnessing chain} for the RFRS group.
\end{defn}

\begin{defn}
    It is easy to see that finitely generated RFRS groups are residually (torsion-free solvable), and therefore satisfy the strong Atiyah conjecture by \cite{SchickL2Int2002}. By a result of Linnell \cite{LinnellDivRings93}, this implies that the division closure of $\Q[G]$ in the algebra of affiliated operators $\mathcal U(G)$ is a division ring, which we call $\mathcal{D}_{\Q[G]}$. The \emph{$n$th $L^2$-Betti numbers} of $G$ is
    \[
        \b{n}(G) \ := \ \dim_{\mathcal D_{\Q[G]}} \H^n(G; \mathcal D_{\Q[G]}),
    \]
    where $\dim_{\mathcal D_{\Q[G]}} M$ the rank of a (necessarily free of unique rank) $\mathcal D_{\Q[G]}$-module $M$. For more details, we refer the reader to \cite[Chapter 10]{Luck02}.

    In fact, for RFRS groups $G$, Jaikin-Zapirain shows that $k[G]$ embeds into a division ring $\Dk{G}$ for any field $k$, which shares many properties with $\mathcal D_{\Q[G]}$ \cite[Corollary 1.3]{JaikinZapirain2020THEUO}. We may use this division ring to define the $L^2$-Betti numbers of a RFRS group over a field $k$ by
    \[
        \b{n}(G;k) \ := \ \dim_{\Dk{G}} \H^n(G; \Dk{G}).
    \]
    In particular, $\b{n}(G;\Q) = \b{n}(G)$.
\end{defn}

For us, the important common feature of all the division rings $\Dk{G}$ (as the ground field $k$ varies) is that they can be expressed as directed unions of rings which are closely related to the Novikov rings (over $k$) of $G$ and its finite-index subgroups. For $k = \Q$, this is the crux of Kielak's fibring theorem \cite{KielakRFRS}. This was extended by Jaikin-Zapirain to all fields $k$ in the appendix of \cite{JaikinZapirain2020THEUO}. We give a rough sketch of how this works, emphasising the details that will be necessary for us.

Fix a finitely generated RFRS group $G$ and a witnessing chain $(G_i)_{i\geqslant 0}$. In \cite[Definition 4.3]{KielakRFRS}, Kielak introduces the notion of a \emph{rich set} of characters in $\H^1(G_i;\R) \smallsetminus \{0\}$, a notion which depends the chosen witnessing chain $(G_i)_{i \geqslant 0}$. The definition of a rich set is somewhat technical, but we will only need the following properties:
\begin{itemize}
    \item rich sets are open;
    \item rich sets are nonempty;
    \item the intersection of rich sets is rich \cite[Lemma 4.4]{KielakRFRS}.
\end{itemize}

\begin{rem}
    In \cite[Definition 4.3]{KielakRFRS}, it is not required that rich sets be open. This is however simply a misprint, and indeed it is important that rich sets be open for the proofs of \cite[Lemma 4.4]{KielakRFRS} and \cite[Lemma 4.13]{KielakRFRS}.
\end{rem}

If $U \subseteq \H^1(G_i;\R)$ is any set, then Kielak defines \cite[Definition 3.15]{KielakRFRS} an associated subring $\mathcal D_{k[G_i],U} \subseteq \Dk{G}$ (see also \cite[Section 5.2]{JaikinZapirain2020THEUO}). Again the definition of $\mathcal D_{k[G_i],U}$ is somewhat technical, but we will really only use some basic properties (references are provided for the interested reader):
\begin{itemize}
    \item $k[G_i]$ is contained in $\mathcal D_{k[G_i],U}$ for every choice of $U$ (follows directly from \cite[Definition 3.15]{KielakRFRS});
    \item if $U \subseteq V \subseteq \H^1(G_i;\R)$, then $\mathcal D_{k[G_i],V} \subseteq \mathcal D_{k[G_i],U}$ (follows directly from \cite[Definition 3.15]{KielakRFRS});
    \item for any choice of $U$, the conjugation action of $G$ on $k[G_i]$ extends to an action on $\mathcal D_{k[G_i],U}$, and we may thus form the twisted group ring 
    \[
        \mathcal D_{k[G_i],U} * G/G_i \ \cong \ \mathcal D_{k[G_i],U} \otimes_{k[G_i]} k[G],
    \]
    and there is a natural embedding $\mathcal D_{k[G_i],U} * G/G_i \subseteq \Dk{G}$ (see \cite[Proposition 5.3]{JaikinZapirain2020THEUO});
    \item for every $\chi \in U$, there is a map $\mathcal D_{k[G_i],U} \rightarrow \widehat{k[G_i]}^\chi$, or in other words, $\widehat{k[G_i]}^\chi$ is a $\mathcal D_{k[G_i],U}$-algebra (see \cite[Lemma 3.13]{KielakRFRS}).
\end{itemize}

We continue with the notation above: $G$ is a RFRS group with witnessing chain $(G_i)_{i\geqslant 0}$. Let $n$ be a non-negative integer and let $\mathbf{U} = (U_n, U_{n+1}, \dots)$, where $U_i \leqslant \H^1(G_i;\R)$ is a rich set for each $i \geqslant n$. We call such a $\mathbf U$ a \emph{sequence of rich sets}. Define
\[
    \mathcal{D}_{k[G], \mathbf U} \ = \ \bigcap_{i \geqslant n} \mathcal{D}_{k[G_i],U_i} * G/G_i.
\]
The main technical theorem of Kielak is that $\Dk{G}$ is covered by the rings $\mathcal{D}_{k[G], \mathbf U}$.

\begin{thm}[{\cite[Theorem 4.13]{KielakRFRS}}]
    $\Dk{G} = \bigcup_{\mathbf U} \mathcal{D}_{k[G], \mathbf U}$, where the union is taken over all sequences of rich sets $\mathbf U$.
\end{thm}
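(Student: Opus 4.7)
The inclusion $\supseteq$ is immediate from the definitions: each subring $\mathcal D_{k[G_i],U_i} * G/G_i$ embeds in $\Dk{G}$, so any intersection of such subrings lies in $\Dk{G}$, and hence $\bigcup_{\mathbf U} \mathcal D_{k[G],\mathbf U} \subseteq \Dk{G}$.

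For the reverse inclusion $\subseteq$, the plan is to fix an arbitrary $x \in \Dk{G}$ and construct a sequence of rich sets $\mathbf U = (U_n, U_{n+1}, \dots)$ such that $x$ lies in $\mathcal D_{k[G_i],U_i} * G/G_i$ for all $i \geqslant n$. The starting point is that $\Dk{G}$ is the division closure of $k[G]$, so $x$ is obtained from $k[G]$ by finitely many ring operations and inversions. By choosing $n$ large enough, all the coefficients that appear in these operations may be assumed to lie in $k[G_n]$ (taking a coset representative decomposition $k[G] = k[G_n] * G/G_n$ and noting that finitely many elements involve only finitely many cosets). Thus for each $i \geqslant n$, $x$ is described by a finite rational expression over $k[G_i] * G/G_i$.

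The main step is then, for each $i \geqslant n$, to produce a rich set $U_i$ such that every inversion appearing in the expression for $x$ can be carried out inside $\mathcal D_{k[G_i],U_i}*G/G_i$. This is the heart of the matter and uses the design of the rings $\mathcal D_{k[G_i],U}$: for any single element $y \in k[G_i]*G/G_i$ invertible in $\Dk{G}$, the set of characters $\chi \in \H^1(G_i;\R)\smallsetminus\{0\}$ for which $y^{-1}$ is represented over $\mathcal D_{k[G_i],\{\chi\}}*G/G_i$ contains a rich set, because such representability is exactly what richness is tailored to guarantee (via the compatible existence of inverses in the associated Novikov rings $\widehat{k[G_i]}^\chi$, using the map $\mathcal D_{k[G_i],U} \rightarrow \widehat{k[G_i]}^\chi$ for $\chi \in U$). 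Since the expression for $x$ involves only finitely many inversions, one intersects the finitely many rich sets obtained in this way; the intersection is again rich by the intersection property of rich sets cited from \cite[Lemma 4.4]{KielakRFRS}. Setting $U_i$ to be this intersection gives $x \in \mathcal D_{k[G_i],U_i}*G/G_i$, and taking $\mathbf U = (U_i)_{i \geqslant n}$ produces the required sequence.

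The main obstacle is the key step above: showing that each required inversion corresponds to a rich set of characters. This is a purely technical verification that hinges on unpacking Kielak's definition of $\mathcal D_{k[G_i],U}$ and of richness simultaneously, and proceeds by induction on the complexity of the rational expression for $x$, with the base case handled by the fact that $k[G_i] \subseteq \mathcal D_{k[G_i],U}$ for every $U$ (so no richness condition is needed before any inversion occurs). The inductive step uses that if $y \in \mathcal D_{k[G_i],U}*G/G_i$ is invertible in $\Dk{G}$, then the set of $\chi \in U$ for which the inverse of $y$ exists in $\mathcal D_{k[G_i],\{\chi\}}*G/G_i$ is open, non-empty, and closed under the rich-set operations, so contains a rich subset of $U$. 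Once this technical core is in place, the construction of $\mathbf U$ is routine and the theorem follows.
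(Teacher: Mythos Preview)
The paper does not prove this theorem; it is quoted verbatim as \cite[Theorem 4.13]{KielakRFRS} and used as a black box. There is therefore no proof in the paper to compare your proposal against.

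That said, a word on your sketch. The overall shape---fix $x \in \Dk{G}$, represent it by a finite rational expression over $k[G]$, and for each $i$ find a rich $U_i$ over which all the required inversions can be carried out---is in the spirit of Kielak's argument, but the step you flag as ``purely technical'' is the entire content of the theorem. Specifically, you assert that for $y$ invertible in $\Dk{G}$, the set of $\chi$ for which $y^{-1}$ is representable in $\mathcal D_{k[G_i],\{\chi\}} * G/G_i$ contains a rich set, and that this is ``exactly what richness is tailored to guarantee''. This is circular as written: richness is a condition on subsets of $\H^1(G_i;\R)$ defined independently of any particular $y$, and the substance of Kielak's proof is precisely to show that invertibility in $\Dk{G}$ (which is characterised via Hughes-freeness and the structure of the Linnell skew field) forces invertibility over Novikov rings for a rich set of characters. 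Your inductive scheme on the complexity of the rational expression is reasonable, but the inductive step as you state it---that the set of good $\chi$ is ``closed under the rich-set operations, so contains a rich subset''---is not an argument; rich sets are not defined by closure properties, and one must actually exhibit the required open sets at each level of the RFRS chain. In short, your outline identifies the right objects but does not engage with the mechanism linking $\Dk{G}$-invertibility to Novikov invertibility, which is where all the work lies.
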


We only need the following additional fact.

\begin{lem}\label{lem:directed_system}
    The rings $\{\mathcal{D}_{k[G], \mathbf U}\}_{\mathbf U}$ form a directed system under inclusion.
\end{lem}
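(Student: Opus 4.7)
The plan is straightforward: given two sequences of rich sets $\mathbf U = (U_{n_{\mathbf U}}, U_{n_{\mathbf U}+1}, \dots)$ and $\mathbf V = (V_{n_{\mathbf V}}, V_{n_{\mathbf V}+1}, \dots)$, I will construct a common upper bound $\mathbf W$ by taking intersections termwise, shifted to start at the later of the two starting indices.

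Concretely, set $n_{\mathbf W} = \max(n_{\mathbf U}, n_{\mathbf V})$ and, for each $i \geqslant n_{\mathbf W}$, let $W_i = U_i \cap V_i$. The property that intersections of rich sets are rich (Kielak's \cite[Lemma 4.4]{KielakRFRS}, noted in the excerpt) immediately gives that each $W_i$ is a rich subset of $\H^1(G_i;\R)$, so $\mathbf W = (W_{n_{\mathbf W}}, W_{n_{\mathbf W}+1}, \dots)$ is a valid sequence of rich sets.

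Now I need to verify $\mathcal{D}_{k[G],\mathbf U} \subseteq \mathcal{D}_{k[G],\mathbf W}$ and analogously for $\mathbf V$. For each $i \geqslant n_{\mathbf W}$, the inclusion $W_i \subseteq U_i$ yields $\mathcal{D}_{k[G_i],U_i} \subseteq \mathcal{D}_{k[G_i],W_i}$ by the (contravariant) monotonicity of Kielak's construction noted in the excerpt. Tensoring with $k[G]$ over $k[G_i]$ preserves this inclusion, so $\mathcal{D}_{k[G_i],U_i} * G/G_i \subseteq \mathcal{D}_{k[G_i],W_i} * G/G_i$ inside $\Dk{G}$. Since $n_{\mathbf W} \geqslant n_{\mathbf U}$, the intersection defining $\mathcal{D}_{k[G],\mathbf U}$ runs over at least as many indices as that defining $\mathcal{D}_{k[G],\mathbf W}$, so
\[
    \mathcal{D}_{k[G],\mathbf U} \ = \ \bigcap_{i \geqslant n_{\mathbf U}} \mathcal{D}_{k[G_i],U_i} * G/G_i \ \subseteq \ \bigcap_{i \geqslant n_{\mathbf W}} \mathcal{D}_{k[G_i],U_i} * G/G_i \ \subseteq \ \bigcap_{i \geqslant n_{\mathbf W}} \mathcal{D}_{k[G_i],W_i} * G/G_i \ = \ \mathcal{D}_{k[G],\mathbf W}.
\]
The argument for $\mathbf V$ is identical, swapping the roles of $U_i$ and $V_i$.

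There is essentially no obstacle here; the only point requiring any care is to double-check the direction of the monotonicity $U \subseteq V \Rightarrow \mathcal{D}_{k[G_i],V} \subseteq \mathcal{D}_{k[G_i],U}$ (which is the reason one takes intersections rather than unions of rich sets), and to keep track of the starting indices, both of which are handled above.
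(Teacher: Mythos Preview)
Your proposal is correct and follows essentially the same approach as the paper: take the termwise intersection $\mathbf U \cap \mathbf V$ starting at $\max(n_{\mathbf U}, n_{\mathbf V})$, use that intersections of rich sets are rich, and invoke the contravariant monotonicity of $\mathcal{D}_{k[G_i],-}$ to obtain the inclusions. You spell out the inclusion chain in slightly more detail than the paper does, but the argument is identical.
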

\begin{proof}
    Let $m$ and $n$ be non-negative integers, and let $\mathbf U = (U_m, U_{n+1}, \dots)$ and $\mathbf V = (V_n, V_{n+1}, \dots)$ be sequences of rich sets. Let $M = \max\{m,n\}$. Then $\mathcal{D}_{k[G], \mathbf U}$ and $\mathcal{D}_{k[G], \mathbf V}$ are each contained in $\mathcal{D}_{k[G], \mathbf U \cap \mathbf V}$, where $\mathbf U \cap \mathbf V := (U_M \cap V_M, U_{M+1} \cap V_{M+1}, \dots)$. By the facts listed above, $\mathbf U \cap \mathbf V$ is a sequence of rich sets, and 
    \[
        \mathcal{D}_{k[G], \mathbf U} \ \subseteq \ \mathcal{D}_{k[G], \mathbf U \cap \mathbf V} \ \supseteq \ \mathcal{D}_{k[G], \mathbf V}.
    \]
    This proves that the system of rings $\mathcal{D}_{k[G], \mathbf U}$ is directed. \qedhere
\end{proof}

\begin{lem}\label{lem:direct_limit}
    Let $G$ be a group with $\cd_k(G) = n$ for some field $k$. Let $S = \varinjlim S_j$, where $\{S_j\}_{j \in J}$ is a directed system of $k[G]$-algebras. Suppose that $\H^i(G;-)$ commutes with direct limits of $k[G]$-modules for all $i \geqslant m$. If $\H^i(G;S) = 0$ for all $i \geqslant m$, then there is some $j \in J$ such that  $\H^i(G;S_j) = 0$ for all $i \geqslant m$.
\end{lem}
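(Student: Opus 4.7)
The plan is to exploit the hypothesis that cohomology commutes with direct limits in the range $[m, n]$ in order to reduce the problem to a finite cochain-complex calculation, build a chain contraction over $S$, and then descend its finitely many matrix entries to some $S_j$ using directedness of $J$. Since $\cd_k(G) = n$ gives $\H^i(G;-) = 0$ for $i > n$, the task reduces to finding $j$ with $\H^i(G; S_j) = 0$ for every $i$ in the finite range $m \leqslant i \leqslant n$; by the commuting-with-direct-limits hypothesis, $\varinjlim_j \H^i(G; S_j) \cong \H^i(G; S) = 0$ throughout this range.

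Next, I would establish as a sub-lemma that $k$ admits a projective resolution $P_\bullet \to k \to 0$ over $k[G]$ with $P_i = 0$ for $i > n$ and $P_i$ finitely generated for $i \in [m-1, n]$. One direction of such a characterisation is immediate: if the $P_i$ are finitely generated in this range, then each $\Hom_{k[G]}(P_i, -)$ commutes with direct limits, so each $\H^i(G; -)$ does for $i \geqslant m$. The other direction---the one needed here---is a ``top-degree'' dual of Brown's finiteness criterion (\cref{thm:Brown_crit_finiteness}), and I would prove it by descending induction on degree, starting from the Eilenberg--Watts representation (\cref{thm:EW_thm}) $\H^n(G;-) \cong - \otimes_{k[G]} \H^n(G; k[G])$ of the right-exact top cohomology in order to extract a finitely generated $P_n$, and iterating downward.

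With such a resolution, form the cochain complex $E^\bullet_j := \Hom_{k[G]}(P_\bullet, S_j)$. In degrees $[m-1, n]$, each $E^i_j \cong P_i^* \otimes_{k[G]} S_j$, where $P_i^* := \Hom_{k[G]}(P_i, k[G])$, is a finitely generated projective right $S_j$-module; analogously for $E^\bullet := \Hom_{k[G]}(P_\bullet, S)$, and moreover the natural map $\varinjlim_j E^i_j \to E^i$ is an isomorphism in this range because $\Hom_{k[G]}(P_i, -)$ commutes with direct limits whenever $P_i$ is finitely generated. The vanishing $\H^i(G; S) = 0$ for $m \leqslant i \leqslant n$ then says that $E^\bullet$ is exact in these degrees; since its terms in the range $[m-1, n]$ are finitely generated projective right $S$-modules, a standard top-down construction (analogous to the one in the proof of the earlier lemma characterising $\cd(C_\bullet) \leqslant n$) yields a partial cochain contraction $s^i \colon E^i \to E^{i-1}$ for $i \in [m, n+1]$ (with $s^{n+1} = 0$) satisfying $d^{i-1} s^i + s^{i+1} d^i = \id_{E^i}$ for $m \leqslant i \leqslant n$.

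Finally, each $s^i$ is a morphism between finitely generated projective right $S$-modules and is therefore determined by finitely many elements of $S$; since $S = \varinjlim_j S_j$ and only finitely many such maps appear in total, directedness of $J$ provides a single $j \in J$ such that all these elements lie in $S_j$. The same data then realises a partial cochain contraction of $E^\bullet_j$ over $S_j$ in degrees $[m, n]$, forcing $\H^i(G; S_j) = 0$ in that range and automatically also for $i > n$ by $\cd_k(G) = n$. The main obstacle is the sub-lemma on the existence of a resolution finitely generated in the top degrees; an alternative route via descending induction on $i$ using Eilenberg--Watts at each stage appears to require the same finite-generation input in a different guise, so the cleanest proof seems to be to isolate and prove this dual Brown criterion once and for all.
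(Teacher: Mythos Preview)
Your outline is coherent and, granting the sub-lemma (the ``dual Brown criterion'' that the hypotheses force a length-$n$ projective resolution with $P_i$ finitely generated for $i\in[m-1,n]$), the chain-contraction-and-descent argument goes through. But the sub-lemma is exactly where all the content sits, and you have not proved it; you have only sketched a strategy (``extract a finitely generated $P_n$ from the Eilenberg--Watts representation of $\H^n(G;-)$, and iterate'') without explaining how the functorial description of $\H^n(G;-)$ actually lets one replace the top syzygy by a finitely generated projective. This step is not standard in the way Brown's criterion is, and in any case it is at least as much work as the lemma itself.

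More importantly, the route you mention in your last sentence and then set aside---descending induction on $i$ using Eilenberg--Watts at each stage---is exactly what the paper does, and it does \emph{not} require the finite-generation input you say it does. Here is the point you are missing: cohomology $\H^l(G;-)$ always commutes with direct \emph{products} of modules, because $\Hom$ does. So once the inductive hypothesis gives $\H^i(G;S_j)=0$ for $i>l$, Eilenberg--Watts yields $\H^l(G;-)\cong \H^l(G;S_j)\otimes_{S_j}-$ on $S_j$-modules, and the product-preservation then forces $\H^l(G;S_j)$ to be finitely generated over $S_j$ for free, with no hypothesis on the resolution whatsoever. Its finitely many generators die in $\H^l(G;S_j)\otimes_{S_j}S_{j'}\cong \H^l(G;S_{j'})$ for some $j'\geqslant j$, and \cref{lem:acyclic_ring_module} propagates the vanishing back up. Thus the paper's argument is strictly shorter: it bypasses your sub-lemma entirely by locating the needed finiteness in $\H^l(G;S_j)$ as an $S_j$-module rather than in the resolution of $k$ as a $k[G]$-module.
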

\begin{proof}
    Suppose that, for a fixed $l \geqslant m$, there exists $j \in J$ with $\H^i(G;S_j) = 0$ for all $i > l$ (we trivially know this holds for $l = n$ and all $j \in J$). Then $\H^l(G;-)$ is a right exact functor on the category of $S_j$-modules. By \cref{thm:EW_thm}, there is a natural isomorphism
    \[
        \H^l(G;-) \ \cong \ \H^l(G;S_j) \otimes_{S_j} -
    \]
    as functors on the category of $S_j$-modules.
    
    The functor $\H^l(G;-)$ commutes with direct products of $S_j$-modules, and therefore so does the functor $\H^l(G;S_j) \otimes_{S_j} -$. It is a well-known fact (and an easy exercise) that $\H^l(G;S_j)$ must be finitely generated as an $S_j$-module. Let $z_1, \dots, z_s$ be a finite generating set for $\H^l(G;S_j)$. Since
    \[
        \H^l(G;S) \ \cong \ \H^l(G;S_j) \otimes_{S_j} S \ \cong \ \varinjlim_{j' \geqslant j} \H^l(G;S_j) \otimes_{S_j} S_{j'} \ = \ 0,
    \]
    we see that there is some $j' \in J$ such that $z_i \otimes 1 = 0$ in $\H^l(G;S_j) \otimes_{S_j} S_{j'}$ for each $i = 1, \dots, s$. But these elements also generate $\H^l(G;S_j) \otimes_{S_j} S_{j'}$, so we conclude that 
    \[
        \H^l(G;S_{j'}) \ \cong \ \H^l(G;S_j) \otimes_{S_j} S_{j'} \ = \ 0.
    \]
    Since $S_{j'}$ is an $S_j$-module, we also have $\H^i(G; S_{j'}) = 0$ for all $i > l$ by \cref{lem:acyclic_ring_module}. The lemma then follows by induction. \qedhere
\end{proof}

We can now prove the main result of this section, which implies \cref{introthm:RFRS} by taking $d = 1$ and using the Stallings--Swan Theorem \cite{Stallings_cd1,Swan_cd1}, which characterises free groups as the groups of cohomological dimension at most one.

\begin{thm}\label{thm:RFRS_cd_drop}
    Let $G$ be a RFRS group with $\cd_k(G) = n$ for some field $k$. Suppose $\H^i(G;-)$ commutes with direct limits of $k[G]$-modules for all $i > d$. The following are equivalent:
    \begin{enumerate}
        \item\label{item:weak_dim} $\Dk{G}$ is of weak dimension at most $d$ as a $k[G]$-module;
        \item\label{item:all_subgps} for every subgroup $H \leqslant G$, we have $\b{i}(H;k) = 0$ for all $i > d$;
        \item\label{item:subnormal} there is a subnormal series $\Gamma_l < \Gamma_{l-1} < \cdots < \Gamma_0 = G$, where each quotient $\Gamma_i/\Gamma_{i+1}$ is either finite or cyclic, and $\cd_k(\Gamma_l) = d$.
    \end{enumerate}
\end{thm}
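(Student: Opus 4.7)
The plan is to prove the cyclic chain $(1) \Rightarrow (2) \Rightarrow (3) \Rightarrow (1)$.

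For $(1) \Rightarrow (2)$, we apply Shapiro's Lemma: for any subgroup $H \leqslant G$ there is a natural isomorphism
\[
\Tor_i^{k[G]}(\Dk{G}, k[G/H]) \ \cong \ \H_i(H; \res_H^G \Dk{G}).
\]
Since $\Dk{H}$ embeds into $\Dk{G}$ by the universal property of the Jaikin-Zapirain division ring for RFRS groups, $\Dk{G}$ is free (hence flat) as a right $\Dk{H}$-module. Thus the right-hand side is isomorphic to $\H_i(H; \Dk{H}) \otimes_{\Dk{H}} \Dk{G}$, which vanishes iff $\b{i}(H; k) = 0$. Hypothesis $(1)$ forces the Tor on the left to vanish for $i > d$, yielding $(2)$.

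For $(3) \Rightarrow (1)$, we induct on the length $l$ of the subnormal series. When $l = 0$ we have $\cd_k(G) = d$; since $k$ is a field, the weak global dimension of $k[G]$ equals $d$, so $\Dk{G}$ has weak dimension at most $d$. For the inductive step, if $G/\Gamma_1$ is finite then $\Dk{G}$ is $\Dk{\Gamma_1}$-free of rank $[G : \Gamma_1]$ and $k[G]$ is $k[\Gamma_1]$-free of the same rank, so the weak-dimension bound transfers from $\Dk{\Gamma_1}$ over $k[\Gamma_1]$ to $\Dk{G}$ over $k[G]$. If $G/\Gamma_1 \cong \Z$, we use that $\Dk{G}$ is the Ore field of fractions of the skew Laurent ring $\Dk{\Gamma_1}_\alpha[t^{\pm 1}] \cong \Dk{\Gamma_1} \otimes_{k[\Gamma_1]} k[G]$. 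The intermediate ring has weak dimension at most $d$ over $k[G]$ by a change-of-rings calculation (using the freeness of $k[G]$ over $k[\Gamma_1]$ together with the inductive hypothesis), and $\Dk{G}$ is flat over it as a localization, so
\[
\Tor_i^{k[G]}(\Dk{G}, M) \ \cong \ \Dk{G} \otimes_{\Dk{\Gamma_1}_\alpha[t^{\pm 1}]} \Tor_i^{k[G]}(\Dk{\Gamma_1}_\alpha[t^{\pm 1}], M) \ = \ 0
\]
for all $k[G]$-modules $M$ and $i > d$.

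For $(2) \Rightarrow (3)$, we induct on $n - d$, the base case $n = d$ being immediate with $l = 0$. Otherwise $(2)$ gives $\H^n(G; \Dk{G}) = 0$; by the Kielak decomposition $\Dk{G} = \varinjlim_{\mathbf U} \mathcal{D}_{k[G], \mathbf U}$ together with \cref{lem:direct_limit}, which crucially invokes the direct-limit hypothesis on $\H^i(G; -)$ for $i > d$, there is some $\mathbf U$ with $\H^n(G; \mathcal{D}_{k[G], \mathbf U}) = 0$. Unpacking the inclusion $\mathcal{D}_{k[G], \mathbf U} \subseteq \mathcal{D}_{k[G_j], U_j} * G/G_j$ and the natural maps to Novikov rings $\widehat{k[G_j]}^\chi$ for $\chi \in U_j$, we extract a finite-index subgroup $G_j$ and an integral character $\chi$ with $\pm\chi \in \Sigma^*(G_j)$. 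By \cref{introcor:if_and_only_if}, $\cd_k(\ker \chi) = n - 1$. Since $\ker \chi$ is a RFRS subgroup of $G$ inheriting hypothesis $(2)$, induction supplies a subnormal series as in $(3)$ for $\ker \chi$; prepending $G > G_j > \ker \chi$ yields the desired series for $G$.

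The hardest step will be $(2) \Rightarrow (3)$, where we must pass from the abstract vanishing $\b{n}(G; k) = 0$ to an explicit integral character on a finite-index subgroup whose kernel has strictly smaller cohomological dimension. This is where the Kielak--Jaikin-Zapirain decomposition of the universal division ring, the direct-limit hypothesis, and the $\Sigma^*$-invariant theory developed earlier in the paper all combine.
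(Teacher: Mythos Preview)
Your implications $(1)\Rightarrow(2)$ and $(3)\Rightarrow(1)$ are essentially the paper's arguments, with only cosmetic differences (the paper treats finite and cyclic quotients simultaneously via amenability and Tamari's Ore-localisation result, but your case split works just as well).

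The gap is in $(2)\Rightarrow(3)$. Your induction is on $n-d$, and to apply the inductive hypothesis to $\ker\chi$ you must check that $\ker\chi$ satisfies the \emph{standing hypothesis} of the theorem, namely that $\H^i(\ker\chi;-)$ commutes with direct limits for all $i>d$. You only say that $\ker\chi$ inherits condition~(2), which is true but insufficient: the direct-limit assumption is separate and does \emph{not} pass to kernels of integral characters automatically. Without it, \cref{lem:direct_limit} cannot be invoked at the next stage of the induction, and the argument collapses.

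The paper fixes this by working in all degrees above $d$ simultaneously rather than only at the top. From $\H^i(G;\Dk{G})=0$ for every $i>d$, \cref{lem:direct_limit} gives a sequence $\mathbf U$ with $\H^i(G;\mathcal D_{k[G],\mathbf U})=0$ for all $i>d$; propagating through $\mathcal D_{k[G_s],U_s}*G/G_s$ and the Novikov algebras then yields $\H^i(G_s;\widehat{k[G_s]}^{\pm\chi})=0$ for all $i>d$. At this point one applies \cref{thm:cd_limits_commute} (not merely \cref{introcor:if_and_only_if}), which delivers \emph{both} $\cd_k(\ker\chi)=n-1$ \emph{and} the direct-limit property for $\H^i(\ker\chi;-)$ in degrees $i>d$. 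Your use of only $\H^n$ and only $\Sigma^*=\Sigma_1^*$ loses exactly the information needed to continue the induction.
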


The proof will show that in \ref{item:subnormal}, the quotients $\Gamma_i/\Gamma_{i+1}$ will alternate between being finite and infinite cyclic, and when $\Gamma_i/\Gamma_{i+1} \cong \Z$ we have $\cd_k(\Gamma_{i+1}) < \cd_k(\Gamma_i)$.

\begin{proof}[Proof (of \cref{thm:RFRS_cd_drop})]
    The implication \ref{item:weak_dim} $\Rightarrow$ \ref{item:all_subgps} follows quickly from the fact that
    \[
        \H_i(H;\Dk{G}) \ \cong \ \H_i(H;\Dk{H}) \otimes_{\Dk{H}} \Dk{G}
    \]
    for all $i$, by flatness of $\Dk{G}$ over $\Dk{H}$.

    \smallskip

    Suppose that \ref{item:all_subgps} holds. By \cref{lem:direct_limit,lem:directed_system}, there is a sequence of rich sets $\mathbf U = (U_s, U_{s+1}, \dots)$ such that $\H^i(G;\mathcal{D}_{k[G], \mathbf U}) = 0$ for all $i > d$. But $\mathcal{D}_{k[G], \mathbf U} \subseteq \mathcal D_{k[G_s],U_s} * G/G_s$, and therefore
    \[  
        0 \ = \ \H^i(G; \mathcal D_{k[G_s],U_s} * G/G_s) \ \cong \ \H^i(G_s; \mathcal D_{k[G_s],U_s}) 
    \]
    for all $i > d$ by \cref{lem:acyclic_ring_module}. We will take $\Gamma_1 = G_s$. Recall that $\widehat{k[G_s]}^\chi$ is a $\mathcal D_{k[G_s],U_s}$-algebra for every $\chi \in U_s$. Since $-U_s \cap U_s$ is also a rich (and hence open) set, there is an integral character $\chi \colon G_s \rightarrow \Z$ such that 
    \[
        \H^i(G_s; \widehat{k[G_s]}^{\pm\chi}) \ = \ 0
    \]
    for all $i > d$. By \cref{cor:cd_chain_complex}, $\cd_k(\ker \chi) = n-1$ and $\H^i(\ker \chi; -)$ commutes with direct limits for all $i > d$. We take $\Gamma_2 = \ker \chi$, and repeat this argument with $\Gamma_2$ instead of $G$ to extend the chain to $\Gamma_2 > \Gamma_3 > \Gamma_4$, where $\Gamma_2/\Gamma_3$ is finite, and $\Gamma_3/\Gamma_4$ is cyclic with $\cd_k(\Gamma_4) = n - 2$. Continuing in this way proves item \ref{item:subnormal}.

    \smallskip

    To prove \ref{item:subnormal} implies \ref{item:weak_dim}, it suffices to prove the following claim: If 
    \[
        1 \longrightarrow N \longrightarrow G \longrightarrow Q \longrightarrow 1
    \]
    is an extension with $G$ RFRS, $Q$ amenable, and $\Dk{N}$ of weak dimension at most $d$ as a $k[N]$-module, then $\Dk{G}$ is of weak dimension at most $d$ as a $k[G]$-module. The argument is similar to that of \cite[Theorem 3.4]{JaikinLinton_coherence}. Indeed, let $M$ be any $k[G]$-module. Then, for all $i > d$,
    \[
        \Tor_i^{k[G]}(\Dk{N} * G/N,M) \ \cong \ \Tor_i^{k[N]}(\Dk{N},M) \ = \ 0
    \]
    by Shapiro's lemma. But $\Dk{G}$ is the Ore localisation of $\Dk{N}*G/N$ by a result of Tamari \cite{Tamari_Ore}, and since Ore localisation is flat, we also get $\Tor_i^{k[G]}(\Dk{G},M) = 0$ for all $i > d$, as claimed. \qedhere
\end{proof}

\begin{cor}\label{cor:extension_of_free}
    Let $G$ be a RFRS group of type $\FP(k)$. Then $\Dk{G}$ is of weak dimension one as a $k[G]$-module if and only if $G$ is an iterated extension of a free group by (cyclic or finite) groups.
\end{cor}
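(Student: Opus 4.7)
The plan is to deduce the corollary from \cref{thm:RFRS_cd_drop} by setting $d = 1$, and then to translate the resulting cohomological condition on the bottom of the subnormal series into the statement that it is free via the Stallings--Swan theorem. First I would check the hypothesis of \cref{thm:RFRS_cd_drop}: since $G$ is of type $\FP(k)$, Brown's criterion (\cref{thm:Brown_crit_finiteness}) ensures that the functors $\H^i(G;-)$ commute with direct limits of $k[G]$-modules in every degree, so in particular for all $i > 1$. Thus all three equivalent conditions of \cref{thm:RFRS_cd_drop} with $d = 1$ are available.

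For the forward direction, suppose $\Dk{G}$ has weak dimension at most one as a $k[G]$-module. Then \cref{thm:RFRS_cd_drop} supplies a subnormal series $G = \Gamma_0 > \Gamma_1 > \cdots > \Gamma_l$ with each successive quotient $\Gamma_i/\Gamma_{i+1}$ finite or cyclic and with $\cd_k(\Gamma_l) \leqslant 1$. Since $G$ is RFRS, it is torsion-free, and hence so is the subgroup $\Gamma_l$. A torsion-free group of cohomological dimension at most one over any nonzero coefficient ring is free (the Stallings--Swan theorem together with Dunwoody's refinement), so $\Gamma_l$ is free. This exhibits $G$ as an iterated (cyclic or finite) extension of a free group.

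Conversely, if $G$ is an iterated (cyclic or finite) extension of a free group $F$, then the extension itself provides a subnormal series $F = \Gamma_l < \Gamma_{l-1} < \cdots < \Gamma_0 = G$ with finite or cyclic successive quotients, and $\cd_k(F) \leqslant 1$. \cref{thm:RFRS_cd_drop} then yields that $\Dk{G}$ has weak dimension at most one as a $k[G]$-module.

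I do not anticipate any substantial obstacle, as the corollary is essentially a direct translation of \cref{thm:RFRS_cd_drop} through the characterisation of free groups by cohomological dimension. The one point requiring care is invoking the Stallings--Swan theorem in the form suited to coefficients in an arbitrary field, which is however standard for torsion-free groups.
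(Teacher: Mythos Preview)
Your proposal is correct and follows exactly the route indicated by the paper: take $d = 1$ in \cref{thm:RFRS_cd_drop}, use that type $\FP(k)$ gives the direct-limit hypothesis, and translate $\cd_k(\Gamma_l) \leqslant 1$ into freeness via Stallings--Swan (using that RFRS groups are torsion-free). The paper does not even write out a separate proof for this corollary, noting before \cref{thm:RFRS_cd_drop} that it follows by setting $d=1$ and invoking Stallings--Swan; your care in flagging the need for the field-coefficient version (Dunwoody) is a reasonable extra precision.
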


\begin{rem}
    In the introduction, \cref{cor:extension_of_free} was stated in terms of $\mathcal U(G)$, the algebra of operators affiliated to the von Neumann algebra of $G$. Let $k \subseteq \C$ be a field. Since $\mathcal D_{k[G]}$ is the division closure of $k[G]$ in $\mathcal U(G)$, and moreover $\mathcal D_{k[G]}$ is a division ring, $\mathcal U(G)$ is faithfully flat over $\mathcal D_{k[G]}$. It follows that $\mathcal U(G)$ has weak dimension at most one as a $k[G]$-module if and only if $\mathcal D_{k[G]}$ does.
\end{rem}

It is an interesting problem to determine the difference between the invariants $b_i^{(2)}(G;k)$ for different fields $k$. In the context of RFRS groups (of finite type, say), it is an easy consequence of the results main results of \cite{KielakRFRS} and \cite{Fisher_Improved} that $b_1^{(2)}(G;k) = 0$ for one field $k$ if and only if $b_1^{(2)}(G;k') = 0$ for all fields $k'$. It is open whether the value of $b_1^{(2)}(G;k)$ depends on the field $k$, but for every $i > 1$ there are examples of right-angled Artin groups $G$ (which are, in particular, RFRS) such that $b_i^{(2)}(G;k)$ depends on $k$ \cite{AvramidiOkunSchreve2021}. Here we show that $\Dk{G}$ being of weak dimension one (or equivalently, that all subgroups of $G$ have vanishing $k$-$L^2$-Betti numbers) does not depend on $k$.

\begin{cor}
    Let $G$ be a RFRS group of type $\FP$. If $\Dk{G}$ is of weak dimension one as a $k[G]$-module for some field $k$, then $\mathcal D_{k'[G]}$ is of weak dimension one as a $k'[G]$-module for all fields $k'$.
\end{cor}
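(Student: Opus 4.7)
The plan is to reduce the statement to the equivalent group-theoretic characterisation already established, which manifestly does not depend on the choice of field. More precisely, I would invoke \cref{cor:extension_of_free} twice: once in the forward direction over $k$, and once in the backward direction over $k'$.

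First I would note that, since $G$ is of type $\FP$ over $\Z$, it is automatically of type $\FP(k)$ for every field $k$ (project a finite projective resolution over $\Z[G]$ to one over $k[G]$ via $-\otimes_\Z k$). Hence the hypothesis of \cref{cor:extension_of_free} is satisfied simultaneously for all fields. Applying the corollary with the given field $k$, the assumption that $\mathcal D_{k[G]}$ has weak dimension one as a $k[G]$-module translates into the purely group-theoretic statement that $G$ is an iterated extension of a free group by (cyclic or finite) groups; equivalently, via \cref{thm:RFRS_cd_drop}\ref{item:subnormal}, there is a subnormal series $\Gamma_l<\cdots<\Gamma_0=G$ with finite or cyclic quotients such that $\Gamma_l$ is free. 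Neither the series nor the freeness of $\Gamma_l$ involves the field $k$.

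Next I would apply \cref{cor:extension_of_free} in the opposite direction with $k$ replaced by any other field $k'$. Since $G$ is of type $\FP(k')$, and since being an iterated (cyclic or finite) extension of a free group is a hypothesis stated in terms of $G$ alone, the corollary immediately yields that $\mathcal D_{k'[G]}$ is of weak dimension one as a $k'[G]$-module.

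The only point requiring a brief sanity check is that the final group $\Gamma_l$ in the chain is genuinely \emph{free} (rather than merely of cohomological dimension one over the specific field $k$). This follows because RFRS groups are torsion-free (torsion of $G_i$ lies in the kernel of $G_i\to\Q\otimes G_i^{\mathsf{ab}}$ and hence in $\bigcap G_i=\{1\}$), and a torsion-free group of cohomological dimension at most one over any field is free by Stallings--Swan (via Dunwoody's generalisation). There is no real obstacle here; the substance of the theorem is packaged entirely into \cref{thm:RFRS_cd_drop}, and this corollary is essentially just the observation that its middle characterisation \ref{item:subnormal} is visibly independent of the coefficient field.
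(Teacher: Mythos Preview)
Your proposal is correct and matches the paper's intended argument: the corollary is stated without proof precisely because it follows immediately from applying \cref{cor:extension_of_free} in both directions, exactly as you describe. Your extra care about why $\cd_k(\Gamma_l)\leqslant 1$ genuinely gives freeness (torsion-freeness of RFRS groups plus Dunwoody's extension of Stallings--Swan to arbitrary coefficient rings) is a welcome clarification of a point the paper leaves implicit.
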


We conclude the section with applications to groups with elementary hierarchies. Following Hagen and Wise \cite{HagenWise_freebyZ}, we say that the trivial group has an elementary hierarchy of length $0$, and that a group $G$ has an elementary hierarchy of length $n$ if it splits as a graph of groups with an elementary hierarchy of length $n-1$ with cyclic edge groups. The main theorem of \cite{HagenWise_freebyZ} is that any finitely generated virtually special group with an elementary hierarchy is virtually free-by-cyclic (a group is \emph{special} if and only if it is a subgroup of a RAAG, so special groups are RFRS). This class of groups includes, e.g., graphs of free groups with cyclic edge groups not containing unbalanced Baumslag--Solitar groups and limit groups not containing $\Z^3$-subgroups. 

We extend the Hagen--Wise theorem in the following way: say that $G$ has an Abelian hierarchy of length $0$ if it is trivial and that it has an Abelian hierarchy of length $n$ if it virtually splits as a graph of groups with an Abelian hierarchy of length $n-1$ and finitely generated virtually Abelian edge groups. While all groups with an elementary hierarchy of cohomological dimension at most two, groups with an Abelian hierarchy can be of arbitrarily high cohomological dimension (the class contains all limits groups, for example).

\begin{cor}\label{cor:Abelian_hierarchy}
    Let $G$ be a virtually RFRS group of type $\FP(k)$ for some field $k$ admitting a virtually Abelian hierarchy. Then $G$ is an iterated extension of a free group by (cyclic or finite) groups.
\end{cor}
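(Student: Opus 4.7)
The strategy is to deduce the corollary from \cref{cor:extension_of_free} by establishing, via induction on the length $n$ of the hierarchy, the following stronger claim: every virtually RFRS group $H$ with a virtually Abelian hierarchy of length $\leqslant n$ satisfies $\b{i}(K;k) = 0$ for every subgroup $K \leqslant H$ and every $i > 1$. Granting this claim, one passes to a finite-index RFRS subgroup $G' \leqslant G$, which is still of type $\FP(k)$ and inherits a virtually Abelian hierarchy, and applies the implication \ref{item:all_subgps} $\Rightarrow$ \ref{item:subnormal} of \cref{thm:RFRS_cd_drop} to obtain a subnormal series exhibiting $G'$ as an iterated extension of a free group by cyclic or finite quotients; prepending the finite quotient $G/G'$ to this series yields the same structure on $G$.

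For the inductive step, after passing to a further finite-index subgroup of $H$ (which changes $L^2$-Betti numbers only by a multiplicative constant and hence does not affect vanishing), we may assume that $H$ genuinely splits as a graph of groups with finitely generated virtually Abelian edge groups and with vertex groups admitting virtually Abelian hierarchies of length $n-1$. Given any subgroup $K \leqslant H$, Bass--Serre theory presents $K$ as the fundamental group of a graph of groups $\mathcal{K}$ whose vertex groups $K_v$ and edge groups $K_e$ are subgroups of conjugates of the corresponding groups in $H$'s splitting. The Bass--Serre short exact sequence
\[
    0 \longrightarrow \bigoplus_{e} k[K/K_e] \longrightarrow \bigoplus_{v} k[K/K_v] \longrightarrow k \longrightarrow 0
\]
of $k[K]$-modules, upon tensoring with $\Dk{K}$ over $k[K]$ and invoking Shapiro's lemma, yields a Mayer--Vietoris long exact sequence whose flanking terms are $\bigoplus_v \H_i(K_v;\Dk{K})$ and $\bigoplus_e \H_i(K_e;\Dk{K})$. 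The Jaikin--Zapirain flatness of $\Dk{K}$ over $\Dk{K_v}$ and $\Dk{K_e}$ then identifies the $\Dk{K}$-dimensions of these modules with $\b{i}(K_v;k)$ and $\b{i}(K_e;k)$ respectively. The edge contributions vanish for all $i \geqslant 1$ since $K_e$ is virtually Abelian, hence either finite or infinite amenable, and the vertex contributions vanish for $i > 1$ by the inductive hypothesis applied to the virtually RFRS group $K_v \leqslant H_v$. Additivity of $\dim_{\Dk{K}}$ on exact sequences of modules over the division ring $\Dk{K}$ then forces $\b{i}(K;k) = 0$ for $i > 1$, completing the induction.

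The principal technical obstacle is twofold. First, one must verify that virtually Abelian hierarchies of length $\leqslant n$ are genuinely inherited by arbitrary subgroups, with the hierarchy length not exceeding that of the ambient group; this follows by a parallel Bass--Serre induction, since any subgroup acts on the Bass--Serre tree of a virtual splitting of the ambient group. Second, one must justify the dimension computation when the indexing sets of edges and vertices are infinite (as is typically the case for subgroups of infinite index), which relies on the cocontinuity of $\dim_{\Dk{K}}$ as a function on $\Dk{K}$-modules and on the fact that a direct sum of zero-dimensional modules over a division ring is itself of dimension zero.
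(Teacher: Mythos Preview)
Your proof is correct and follows essentially the same strategy as the paper: induct on hierarchy length, use the Mayer--Vietoris (Chiswell) sequence together with the vanishing of higher $k$-$L^2$-Betti numbers for virtually Abelian edge groups, and then invoke \cref{cor:extension_of_free}. The only difference is in packaging. The paper first observes outright that every subgroup of a group with an Abelian hierarchy again has an Abelian hierarchy (your first ``technical obstacle''), and uses this to reduce the inductive claim to the weaker statement that a single group $\Gamma$ with an Abelian hierarchy has $\b{i}(\Gamma;k)=0$ for $i>1$; this avoids tracking an arbitrary subgroup $K$ through induced Bass--Serre splittings and sidesteps your second technical obstacle about infinite graphs of groups. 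Your route works equally well---in fact your stronger inductive hypothesis (vanishing for \emph{all} subgroups of $H$) means you never actually need the inheritance-of-hierarchies statement you flagged, since the inductive hypothesis applied to the vertex group $H_v$ already covers $K_v\leqslant H_v$ directly.
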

\begin{proof}
    It suffices to prove that every subgroup of $G$ has vanishing $k$-$L^2$-Betti numbers above degree one. Every subgroup of $G$ admits an Abelian hierarchy, so it suffices to prove the claim for a fixed group $\Gamma$ with an Abelian hierarchy of length $n$. If $n = 0$, then $\Gamma$ is trivial, and therefore $\b{i}(\Gamma;k) = 0$ for all $i > 0$. Now assume $n > 0$; since $k$-$L^2$-Betti numbers scale under passage to finite-index subgroups, we will assume that $\Gamma$ admits a graph of groups splitting $(\Gamma_v, \Gamma_e)$, where each $\Gamma_v$ has an Abelian hierarchy of length $n-1$ and $\Gamma_e$ is virtually Abelian. Then Chiswell's long exact sequence in the homology of a graph of groups \cite{ChiswellMV1976} and the fact that virtually Abelian groups have vanishing $k$-$L^2$-Betti numbers above degree zero imply that $\b{i}(\Gamma;k) = 0$ for all $i > 1$. The result now follows from \cref{cor:extension_of_free}. \qedhere
\end{proof}

\section{Novikov cohomology versus Novikov homology}\label{sec:one_rel}

Let $G$ be a group of finite type and of cohomological dimension $n$ and let $\chi \colon G \rightarrow \Z$ be an integral character. If for some ring $R$ we have $\H^n(G;\nov{R[G]}{\chi}) = 0$, then $\H_n(G;\nov{R[G]}{\chi}) = 0$. A natural question is whether the converse holds. if the converse fails, it still makes sense to ask whether the vanishing of top-dimensional Novikov homology controls some property of the kernel. A natural guess is that if $\H_n(G;\nov{R[G]}{\pm\chi}) = 0$, then the homological dimension of $\ker \chi$ over $R$ is $n-1$. In this brief section, we will see that these questions have negative answers.

\begin{lem}\label{lem:one_rel_vanish}
    Let $G$ be a torsion-free one-relator group and let $R$ be any ring without zero divisors containing $\Z[G]$. Then $\H_2(G;R) = 0$.
\end{lem}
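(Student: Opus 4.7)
The plan is to compute $\H_2(G;R)$ directly from Lyndon's free resolution of the trivial module over a torsion-free one-relator group. Write $G = \pres{X}{w}$ with $w \in F_X$ a cyclically reduced word; torsion-freeness of $G$ forces $w$ not to be a proper power. If $w = 1$ then $G$ is free, so $\cd(G) \leqslant 1$ and the statement is vacuous; we therefore assume $w$ is a non-trivial reduced word.

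Lyndon's theorem then provides an exact free resolution
\[
    0 \longrightarrow \Z[G] \xrightarrow{\partial_2} \bigoplus_{x \in X} \Z[G] \xrightarrow{\partial_1} \Z[G] \longrightarrow \Z \longrightarrow 0
\]
of the trivial $\Z[G]$-module, where $\partial_1(e_x) = x - 1$ and $\partial_2(1) = \sum_{x \in X} (\partial w/\partial x)\, e_x$ is built from the Fox derivatives of $w$, viewed in $\Z[G]$ via the canonical map $\Z[F_X] \twoheadrightarrow \Z[G]$. The inclusion $\Z[G] \hookrightarrow R$ gives $R$ the structure of a $\Z[G]$-bimodule. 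Tensoring the resolution on the left with $R$ over $\Z[G]$ and taking $\H_2$, one identifies
\[
    \H_2(G; R) \ = \ \ker\bigl(R \longrightarrow \bigoplus\nolimits_{x \in X} R\bigr),
\]
where the displayed map sends $r \mapsto (r \cdot \partial w/\partial x)_{x \in X}$ and all multiplications take place in $R$.

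The crux is to observe that at least one Fox derivative $\partial w/\partial x$ is non-zero in $\Z[G]$. Indeed, the injectivity of $\partial_2$ (which is part of the content of Lyndon's theorem) forces $\partial_2(1) \neq 0$, so some component must be non-zero. Since $\Z[G] \hookrightarrow R$ is an inclusion, this derivative remains non-zero in $R$. As $R$ has no zero divisors, right multiplication by any non-zero element of $R$ is injective on $R$, so the displayed kernel is trivial and $\H_2(G; R) = 0$. There is no substantial obstacle beyond invoking Lyndon's theorem; everything else is a direct computation.
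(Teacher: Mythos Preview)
Your proof is correct and follows essentially the same approach as the paper: both invoke Lyndon's resolution for a torsion-free one-relator group, tensor with $R$, and then use that $\partial_2(1)$ has a nonzero component together with the absence of zero divisors in $R$ to conclude that the induced map $R \to R^n$ is injective. Your version is slightly more explicit in naming the Fox derivatives and in spelling out why some component of $\partial_2(1)$ is nonzero, but the underlying argument is identical.
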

\begin{proof}
    Since $G$ is torsion-free, $\Z$ admits a projective resolution of the form
    \[
        0 \longrightarrow \Z[G] \xlongrightarrow{\partial} \Z[G]^n \longrightarrow \Z[G] \longrightarrow \Z \longrightarrow 0
    \]
    (this is a consequence of the Lyndon identity theorem \cite{Lyndon_identityThm}).
    We claim that the extended map $\partial \colon R \rightarrow R^n$ is injective. Indeed, suppose $\partial(r) = r \partial(1) = 0$. Since $\partial$ is injective, $\partial(1)$ is nonzero. Since $R$ has no zero divisors, this implies $r = 0$, as desired. \qedhere
\end{proof}

\begin{cor}\label{cor:Novikov_one_rel_homol}
    If $G$ is a torsion-free one-relator group and $\chi \colon G \rightarrow \R$ is any homomorphism, then $\H_2(G;\nov{\Z[G]}{\chi}) = 0$.
\end{cor}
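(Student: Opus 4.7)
The plan is to apply \cref{lem:one_rel_vanish} directly with $R = \nov{\Z[G]}{\chi}$. To do this, two things must be checked: that $\nov{\Z[G]}{\chi}$ contains $\Z[G]$, and that it has no zero divisors. The first is immediate from \cref{ex:Novikov}, since elements of $\Z[G]$ are just the formal sums supported on finite subsets of $G$, which trivially satisfy the Novikov support condition.

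The non-trivial point is the absence of zero divisors, which I would prove by the standard leading-term argument. Given a nonzero element $a = \sum a_g g \in \nov{\Z[G]}{\chi}$, the Novikov condition ensures that, after fixing any $g_0 \in \supp(a)$, the set $\supp(a) \cap \chi\inv((-\infty, \chi(g_0)])$ is finite and nonempty, so $t_a := \min \chi(\supp(a))$ is a well-defined real number and the ``leading part'' $a^\top := \sum_{\chi(g) = t_a} a_g g$ is a nonzero element of $\Z[G]$ (a finite sum, by the same argument). Given a second nonzero element $b$ with analogous data $t_b, b^\top$, a direct inspection of the convolution formula shows that all coefficients of $ab$ at $\chi$-level below $t_a + t_b$ vanish, and that the sum of the coefficients of $ab$ at level exactly $t_a + t_b$ equals the product $a^\top b^\top$ taken inside $\Z[G]$.

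So it remains to note that $\Z[G]$ itself has no zero divisors. This is classical: torsion-free one-relator groups are locally indicable by the Brodskii--Howie theorem, and Higman's theorem then implies that $\Z[G]$ is a domain. Hence $a^\top b^\top \neq 0$, so $ab \neq 0$, completing the verification that $\nov{\Z[G]}{\chi}$ is a ring without zero divisors. The corollary then follows at once from \cref{lem:one_rel_vanish}.

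The proof is essentially formal once one accepts the leading-term trick; the only real input beyond \cref{lem:one_rel_vanish} is that torsion-free one-relator group rings are domains, which is a classical consequence of local indicability. There is no serious obstacle.
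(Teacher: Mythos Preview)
Your proposal is correct and follows exactly the same approach as the paper: apply \cref{lem:one_rel_vanish} with $R = \nov{\Z[G]}{\chi}$, using local indicability (Brodski\u{\i}) and Higman's theorem to see that $\Z[G]$ is a domain, and then the minimal-$\chi$-height leading-term argument to lift this to the Novikov ring. The only difference is that you have spelled out the leading-term argument in detail, whereas the paper leaves it as an easy observation.
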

\begin{proof}
    One-relator groups are locally indicable by a result of Brodski\u{\i} \cite{BrodskiiOR} and therefore the group ring $\Z[G]$ has no zero divisors by a result of Higman \cite{Higman_thesis}. By looking at terms of minimal $\chi$-height, it is easy to show that $\nov{\Z[G]}{\chi}$ also has no zero divisors as well. The result then follows immediately from \cref{lem:one_rel_vanish}.
\end{proof}

\begin{ex}
    Consider the Baumslag--Solitar group 
    \[
        G \ = \ \BS(2,3) \ = \ \langle a,t \mid t\inv a^2 t \ = \ a^3 \rangle.
    \]
    It is well known that $G$ is not residually finite, and therefore it cannot be virtually free-by-cyclic \cite{Baumslag_freeByCyclicRF}. So for any finite-index subgroup $H \leqslant G$ and any epimorphism $\chi \colon H \rightarrow \Z$, either
    \[
        \H^2(H;\widehat{\Z[H]}^\chi) \ \neq \ 0 \quad \text{or} \quad \H^2(H;\widehat{\Z[H]}^{-\chi}) \ \neq \ 0
    \]
    by \cref{introcor:if_and_only_if}. In particular, let $\chi \colon G \rightarrow \Z$ be the map defined by $\chi(a) = 0$ and $\chi(t) = 1$. With \cref{cor:Novikov_one_rel_homol} this gives an example of a group of finite type and a map with non-vanishing top-dimensional Novikov cohomology that has vanishing Novikov homology. Moreover, $\ker \chi$ is not of homological dimension one. Indeed, there is a well defined homomorphism
    \[
        \pres{x,y}{x^3 = y^2} \longrightarrow G, \qquad x \longmapsto a, \quad y \longmapsto t\inv a t,
    \]
    which is injective (we leave checking injectivity as an exercise for the reader). Since $M \cong \pres{x,y}{x^3 = y^2}$ is a non-free one-relator group, it is of homological dimension two, which shows that $\ker \chi$ is of homological dimension two. Hence, vanishing top-dimensional Novikov homology does not imply that the homological dimension of the kernel drops.
\end{ex}

\bibliography{bib.bib}

\begin{thebibliography}{MMV98}

\bibitem[AOS21]{AvramidiOkunSchreve2021}
Grigori Avramidi, Boris Okun, and Kevin Schreve.
\newblock Mod {$p$} and torsion homology growth in nonpositive curvature.
\newblock {\em Invent. Math.}, 226(3):711--723, 2021.

\bibitem[Bau71]{Baumslag_freeByCyclicRF}
Gilbert Baumslag.
\newblock Finitely generated cyclic extensions of free groups are residually finite.
\newblock {\em Bull. Austral. Math. Soc.}, 5:87--94, 1971.

\bibitem[BG99]{BuxGonzalez1999}
Kai-Uwe Bux and Carlos Gonzalez.
\newblock The {B}estvina-{B}rady construction revisited: geometric computation of {$\Sigma$}-invariants for right-angled {A}rtin groups.
\newblock {\em J. London Math. Soc. (2)}, 60(3):793--801, 1999.

\bibitem[Bie07]{BieriDeficiency}
Robert Bieri.
\newblock Deficiency and the geometric invariants of a group.
\newblock {\em J. Pure Appl. Algebra}, 208(3):951--959, 2007.
\newblock With an appendix by Pascal Schweitzer.

\bibitem[BNS87]{BNSinv87}
Robert Bieri, Walter~D. Neumann, and Ralph Strebel.
\newblock A geometric invariant of discrete groups.
\newblock {\em Invent. Math.}, 90(3):451--477, 1987.

\bibitem[BR88]{BieriRenzValutations}
Robert Bieri and Burkhardt Renz.
\newblock Valuations on free resolutions and higher geometric invariants of groups.
\newblock {\em Comment. Math. Helv.}, 63(3):464--497, 1988.

\bibitem[Bro75]{Brown_HomCritFinite}
Kenneth~S. Brown.
\newblock Homological criteria for finiteness.
\newblock {\em Comment. Math. Helv.}, 50:129--135, 1975.

\bibitem[Bro84]{BrodskiiOR}
Sergei~D. Brodski\u{\i}.
\newblock Equations over groups, and groups with one defining relation.
\newblock {\em Sibirsk. Mat. Zh.}, 25(2):84--103, 1984.

\bibitem[Bro94]{BrownGroupCohomology}
Kenneth~S. Brown.
\newblock {\em Cohomology of groups}, volume~87 of {\em Graduate Texts in Mathematics}.
\newblock Springer-Verlag, New York, 1994.
\newblock Corrected reprint of the 1982 original.

\bibitem[Chi76]{ChiswellMV1976}
I.~M. Chiswell.
\newblock Exact sequences associated with a graph of groups.
\newblock {\em J. Pure Appl. Algebra}, 8(1):63--74, 1976.

\bibitem[Fel71]{Feldman71}
G.~L. Fel'dman.
\newblock The homological dimension of group algebras of solvable groups.
\newblock {\em Izv. Akad. Nauk SSSR Ser. Mat.}, 35:1225--1236, 1971.

\bibitem[FGS10]{FarberGeogheganSchutz_SikoravComplexes}
M.~Farber, R.~Geoghegan, and D.~Sch\"{u}tz.
\newblock Closed 1-forms in topology and geometric group theory.
\newblock {\em Uspekhi Mat. Nauk}, 65(1(391)):145--176, 2010.

\bibitem[FIK25]{FisherItalianoKielak_PDfibre}
Sam~P. Fisher, Giovanni Italiano, and Dawid Kielak.
\newblock Virtual fibring of {P}oincar{\'e}-duality groups, 2025.
\newblock {\tt arXiv:2506.14666}.

\bibitem[Fis24a]{Fisher_Improved}
Sam~P. Fisher.
\newblock Improved algebraic fibrings.
\newblock {\em Compos. Math.}, 160(9):2203--2227, 2024.

\bibitem[Fis24b]{Fisher_freebyZ}
Sam~P. Fisher.
\newblock On the cohomological dimension of kernels of maps to \(\mathbb{Z}\), 2024.
\newblock {\tt arXiv:2403.18758}, (to appear in \emph{Geom. Topol.}).

\bibitem[Hig40]{Higman_thesis}
Graham Higman.
\newblock {\em Units in group rings}.
\newblock {DPhil}, University of Oxford, 1940.

\bibitem[HK07]{HillmanKochloukova_PDnCovers}
J.~A. Hillman and D.~H. Kochloukova.
\newblock Finiteness conditions and {${\rm PD}_r$}-group covers of {${\rm PD}_n$}-complexes.
\newblock {\em Math. Z.}, 256(1):45--56, 2007.

\bibitem[HW10]{HagenWise_freebyZ}
Mark Hagen and Daniel~T. Wise.
\newblock Special groups with an elementary hierarchy are virtually free-by-{$\mathbb Z$}.
\newblock {\em Groups Geom. Dyn.}, 4(3):597--603, 2010.

\bibitem[JZ21]{JaikinZapirain2020THEUO}
Andrei Jaikin-Zapirain.
\newblock The universality of {H}ughes-free division rings.
\newblock {\em Selecta Math. (N.S.)}, 27(4):Paper No. 74, 33, 2021.

\bibitem[JZL25]{JaikinLinton_coherence}
Andrei Jaikin-Zapirain and Marco Linton.
\newblock On the coherence of one-relator groups and their group algebras.
\newblock {\em Ann. of Math. (2)}, 201(3):909--959, 2025.

\bibitem[Kie20]{KielakRFRS}
Dawid Kielak.
\newblock Residually finite rationally solvable groups and virtual fibring.
\newblock {\em J. Amer. Math. Soc.}, 33(2):451--486, 2020.

\bibitem[Lat94]{Latour_1formes}
Fran\c{c}ois Latour.
\newblock Existence de {$1$}-formes ferm\'ees non singuli\`eres dans une classe de cohomologie de de {R}ham.
\newblock {\em Inst. Hautes \'Etudes Sci. Publ. Math.}, (80):135--194 (1995), 1994.

\bibitem[Lin93]{LinnellDivRings93}
Peter~A. Linnell.
\newblock Division rings and group von {N}eumann algebras.
\newblock {\em Forum Math.}, 5(6):561--576, 1993.

\bibitem[L{\"u}c02]{Luck02}
Wolfgang L{\"u}ck.
\newblock {\em {$L\sp 2$}-invariants: theory and applications to geometry and {$K$}-theory}.
\newblock Springer-Verlag, Berlin, 2002.

\bibitem[Lyn50]{Lyndon_identityThm}
Roger~C. Lyndon.
\newblock Cohomology theory of groups with a single defining relation.
\newblock {\em Ann. of Math. (2)}, 52:650--665, 1950.

\bibitem[MMV98]{MeierMeinertVanWyk1998}
John Meier, Holger Meinert, and Leonard VanWyk.
\newblock Higher generation subgroup sets and the {$\Sigma$}-invariants of graph groups.
\newblock {\em Comment. Math. Helv.}, 73(1):22--44, 1998.

\bibitem[Ran95]{Ranicki_NovikovFiniteDomination}
Andrew Ranicki.
\newblock Finite domination and {N}ovikov rings.
\newblock {\em Topology}, 34(3):619--632, 1995.

\bibitem[Sch02]{SchickL2Int2002}
Thomas Schick.
\newblock Erratum: ``{I}ntegrality of {$L^2$}-{B}etti numbers''.
\newblock {\em Math. Ann.}, 322(2):421--422, 2002.

\bibitem[Sik87]{SikoravThese}
Jean-Claude Sikorav.
\newblock {\em Homologie de Novikov associ\'ee \`a une classe de cohomologie de degr\'e un}.
\newblock Th\`ese {d'\'Etat}, Universit\'e Paris-Sud (Orsay), 1987.

\bibitem[Sta68]{Stallings_cd1}
John~R. Stallings.
\newblock On torsion-free groups with infinitely many ends.
\newblock {\em Ann. of Math. (2)}, 88:312--334, 1968.

\bibitem[Swa69]{Swan_cd1}
Richard~G. Swan.
\newblock Groups of cohomological dimension one.
\newblock {\em J. Algebra}, 12:585--610, 1969.

\bibitem[Tam54]{Tamari_Ore}
Dov Tamari.
\newblock A refined classification of semi-groups leading to generalised polynomial rings with a generalised degree concept.
\newblock In {\em Proceedings of the {I}nternational {C}ongress of {M}athematicians, 1954, {A}msterdam}, volume III, pages 439--440. Erven P. Noordhoff N. V., Groningen; North-Holland Publishing Co., Amsterdam, 1954.

\bibitem[Wat60]{Watts_EWtheorem}
Charles~E. Watts.
\newblock Intrinsic characterizations of some additive functors.
\newblock {\em Proc. Amer. Math. Soc.}, 11:5--8, 1960.

\end{thebibliography}
\bibliographystyle{alpha}

\end{document}